\documentclass[12pt]{amsart}
\usepackage{txfonts}
\usepackage{amssymb}
\usepackage{eucal}
\usepackage{graphicx}
\usepackage{amsmath}
\usepackage{amscd}
\usepackage[all]{xy}           

\usepackage{amsfonts,latexsym}
\usepackage{xspace}
\usepackage{epsfig}
\usepackage{float}
\usepackage{color}
\usepackage{fancybox}
\usepackage{colordvi}
\usepackage{multicol}
\usepackage{colordvi}
\usepackage{stmaryrd}

\topmargin -.8cm \textheight 22.8cm \oddsidemargin 0cm
\evensidemargin -0cm \textwidth 16.3cm

\newtheorem{theorem}{Theorem}[section]
\newtheorem{defn}[theorem]{Definition}
\newtheorem{lemma}[theorem]{Lemma}
\newtheorem{coro}[theorem]{Corollary}
\newtheorem{prop-def}{Proposition-Definition}[section]

\textheight 22.5cm \textwidth 16cm \topmargin -0.8cm
\setlength{\oddsidemargin}{0.0cm}
\setlength{\evensidemargin}{0.0cm}

\newcommand{\nc}{\newcommand}
\newcommand{\delete}[1]{}

\nc{\mlabel}[1]{\label{#1}}  
\nc{\mcite}[1]{\cite{#1}}  
\nc{\mref}[1]{\ref{#1}}  
\nc{\mbibitem}[1]{\bibitem{#1}} 

\delete{
\nc{\mlabel}[1]{\label{#1}  
{\hfill \hspace{1cm}{\bf{{\ }\hfill(#1)}}}}
\nc{\mcite}[1]{\cite{#1}{{\bf{{\ }(#1)}}}}  
\nc{\mref}[1]{\ref{#1}{{\bf{{\ }(#1)}}}}  
\nc{\mbibitem}[1]{\bibitem[\bf #1]{#1}} 
}

\nc{\bfk}{\mathbf{k}}
\nc{\Der}{\mathrm{Der}}
\nc{\Ker}{\mathrm{Ker}}

\begin{document}

\title{3-Lie-Rinehart Algebras }\footnotetext{ Corresponding author: Ruipu Bai, E-mail: bairuipu@hbu.edu.cn.}

\author{RuiPu  Bai}
\address{College of Mathematics and Information Science,
Hebei University
\\
Key Laboratory of Machine Learning and Computational\\ Intelligence of Hebei Province, Baoding 071002, P.R. China} \email{bairuipu@hbu.edu.cn}

\author{Xiaojuan Li}
\address{College of Mathematics and Information  Science,
Hebei University, Baoding 071002, China} \email{lixiaojuan1209@126.com}

\author{Yingli Wu}
\address{College of Mathematics and Information  Science,
Hebei University, Baoding 071002, China} \email{15733268503@163.com}

\date{}

\begin{abstract}

In  this paper, we define a class of  3-algebras which are called  3-Lie-Rinehart algebras. A 3-Lie-Rinehart algebra  is a triple $(L, A, \rho)$, where $A$ is a commutative associative algebra, $L$ is an $A$-module, $(A, \rho)$ is a 3-Lie algebra $L$-module and $\rho(L, L)\subseteq Der(A)$. We  discuss the basic structures, actions and crossed modules of 3-Lie-Rinehart algebras  and  construct 3-Lie-Rinehart algebras from given algebras, we also study the derivations  from  3-Lie-Rinehart algebras to 3-Lie $A$-algebras.
 From the study, we  see that there is much difference between 3-Lie algebras and  3-Lie-Rinehart algebras.

\end{abstract}

\subjclass[2010]{17B05, 17D99.}

\keywords{ 3-Lie algebra,  commutative associative algebra, 3-Lie Rinehart algebra,  action,  crossed module.}

\maketitle



\allowdisplaybreaks

\section{Introduction}

3-Lie algebras \cite{F}  are applied to the study of Nambu mechanics,  the supersymmetry and gauge symmetry transformations of the world-volume theory of multiple coincident
M2-branes \cite{BLG2,HHY,Ro1,T}. The study of 3-Lie algebras is getting more and more attention. In \cite{BCR,B,ST}, authors studied the symplectic structures on 3-Lie algebras,
and gave the close relation between 3-Lie algebras with 3-Pre-Lie algebras, 3-Lie bialgebras and 3-Lie Yang-Baxter equation. In \cite{B,B1,BGL}, authors constructed the tensor form of ske-symmetric solutions of
3-Lie Yang-Baxter equation in  3-Lie algebras, and in \cite{B2,B3,B4, B5}, authors constructed finite and infinite dimensional 3-Lie algebras by Lie algebras, commutative associative algebras, cubic matrices and linear functions,
and studied  a class of 3-Lie algebras (which are called canonical Nambu
3-Lie algebras), which are applied in  Nambu mechanics \cite{T}.  A canonical Nambu 3-Lie algebra is an infinite dimensional vector space of triple classical observables on a three-dimensional phase space with coordinates $x; y; z$, the multiplication is
given by
\begin{equation}\label{eq:nambu}
    [f_1, f_2, f_3]_\partial = \frac {\partial(f_1, f_2, f_3)}{\partial(x, y, z)}
\end{equation}
where the right-hand side is the Jacobian determinant of the vector function $(f_1, f_2, f_3)$. For the infinite dimensional canonical Nambu 3-Lie algebra $A_{\partial}=\sum\limits_{m\in  Z} F z\exp (mx) \oplus \sum\limits_{m\in Z}F y\exp (mx)$ \cite{B5},
 authors studied the structure of it, and proved that $A_{\partial}$ is simple, and the regular representation of $A_{\partial}$ is a Harish-Chandra module, and the inner derivation algebra gives rise to
intermediate series modules of the Witt algebras and contain the smallest full toroidal Lie algebras without center.

In this paper, we define a 3-Lie-Rinehart algebra $(L, A, \rho)$, where $L$ is a 3-Lie algebra, $A$ is a commutative algebra, $L$ is an $A$-module, $(A, \rho)$ is a 3-Lie algebra $L$-module and $\rho(L, L)\subseteq Der(A)$.
We  pay close attention  to the study of structures of $(L, A, \rho)$. We give the actions and modules of $(L, A, \rho)$ on a 3-Lie $A$-algebra $(R, A)$, crossed modules $(L, A, \beta, \partial)$ of
 $(L, A, \rho)$, and derivations from  $(L, A, \rho)$ to a 3-Lie $A$-algebra $(R, A)$.  From a given 3-Lie-Rinehart algebra  $(L, A, \rho)$, we construct 3-Lie-Rinehart algebras  $(L\otimes A, A, \rho_1)$ (Theorem \ref{thm:LtensorA}) and $(E, A, \bar{\rho})$ (Theorem \ref{thm:E}), and Lie-Rinehart algebras  $(L\wedge L, \rho_2)$ ( Theorem \ref{thm:LL}) and $(W(L, R, A), \rho_3)$ (Theorem \ref{thm:do}). We study the structures of derivations from $(L, A, \rho)$ to a 3-Lie $A$-algebra $(R, A)$,  and  prove that $(R,\beta)$ is an  $(L, A, \rho)$-module if and only if the semidirect product $(L\ltimes R, A, \rho_4)$ is a 3-Lie-Rinehart algebra, and any 3-Lie A-algebra homomorphism from $L$ to $R$ induces an action $\beta$ from $(L, A, \rho)$ on $(R, A)$.
 For any  3-Lie-Rinehart algebra central epimorphism $\partial$, we get the crossed module $ (L, A, \beta, \partial)$, in  addition, if $\rho$ satisfies $\rho(L, L)A(L)=0$, then we get a crossed module of Lie-Rinehart algebra $(W(L,L,A), \tilde{\rho_3})$ ( in Theorem \ref{thm:actioneta}).

{\bf Unless otherwise stated,  algebras and vector spaces are over a field $F$ of characteristic zero,  and $A$ denotes a commutative associative algebra over $F$.}

\vspace{2mm}For a vector space $V$ over $F$, $V$ is called {\bf an $A$-module} if there is a $F$-linear mapping
    $$\alpha: A\rightarrow gl(V)\quad ~~ \mbox{satisfying}\quad~~ \alpha(ab)=\alpha(a)\alpha(b), ~~ \alpha(a+b)=\alpha(a)+\alpha(b), ~~ \forall a, b\in A.$$
    \\ For $\forall a\in A, v\in V$, $\alpha(a)(v)$ is simply denoted by $av.$

{\bf A 3-Lie algebra} over a field $F$ is an $F$-vector space $L$ endowed with a linear multiplication $[\ ,\  ,\  ]: L\wedge L\wedge L\rightarrow L$ satisfying for all $x_{1},x_{2},x_{3},y_{2},y_{3}\in L,$
\begin{equation}\label{eq:jacobi}
[[x_{1},x_{2},x_{3}],y_{2},y_{3}]=[[x_{1},y_{2},y_{3}],x_{2},x_{3}]+[[x_{2},y_{2},y_{3}],x_{3},x_{1}]+[[x_{3},y_{2},y_{3}],x_{1},x_{2}].
\end{equation}

Let $L$ be a 3-Lie algebra,  $V$ be a vector space, and
$\rho: L\wedge L\rightarrow gl (V)$ be an $F$-linear mapping. If $\rho$ satisfies that for all $x_i\in L, 1\leq i\leq 4,$

\begin{equation}\label{eq:mod1}
[\rho(x_1, x_2), \rho(x_3, x_4)]=\rho([x_1, x_2, x_3], x_4)-\rho([x_1, x_2, x_4], x_3),
  \end{equation}

\begin{equation}
\label{eq:mod2} \rho([x_1, x_2, x_3], x_4)=\rho(x_1, x_2)\rho(x_3, x_4)+\rho(x_2, x_3)\rho(x_1, x_4)+\rho(x_3, x_1)\rho(x_2, x_4),
\end{equation}
\\
then $(V, \rho)$ is called {\bf a representation} of $L$, or $(V, \rho)$ is {\bf an $L$-module}.

We know that  $(V, \rho)$  is a 3-Lie algebra $L$-module if and only if $L\dot+V$ is a 3-Lie algebra  with the multiplication, for all $x_i\in L, v_i\in V$, $i=1, 2, 3, $
 \begin{equation}
\label{eq:semiproduct} [x_1+v_1, x_2+v_2, x_3+v_3]=[x_1, x_2, x_3]+\rho(x_1, x_2)v_3+\rho(x_2, x_3)v_1+\rho(x_3, x_1)v_2,
\end{equation}
which is called {\bf the semi-direct product 3-Lie algebra}, denoted by $L\ltimes_{\rho} V$ (or simply denoted by $L\ltimes V$). It is clear that $L$ is a subalgebra of $L\ltimes V$ and $V$ is an abelian ideal.

Thanks to Eq \eqref{eq:jacobi}, $(L, \mbox{ad})$ is a representation of the 3-Lie algebra $L$, and it is called  the regular representation of $L$, where
\begin{equation}\label{eq:ad}
\mbox{ad}: L\wedge L\rightarrow gl(L), ~~ x, y\in L,~~~ ad(x, y)z=[x, y, z], ~~~ \forall z\in L.
\end{equation}
And  $\mbox{ad}(L\wedge L)$ is a Lie algebra which is called {\bf the inner derivation algebra of the 3-Lie algebra $L$}.
Thanks to \eqref{eq:jacobi}, $\forall x_1, x_2, y_1, y_2\in L,$
$$[\mbox{ad}(x_1,  y_1), \mbox{ad}(x_2,  y_2)]=\mbox{ad}([x_1, y_1, x_2],  y_2)+\mbox{ad}(x_2, [x_1, y_1, y_2]).$$

\section{Basic structures of 3-Lie Rinehart algenras}
\mlabel{sec:rbl3rbl}

\begin{defn}\label{defin:rinehart}
Let $L$ be a 3-Lie algebra over $F$, $L$ be an $A$-module and $(A, \rho)$ be an $L$-module.
If   $\rho( L\wedge L)\subseteq Der(A)$, and
\begin{equation}\label{eq:Rinhart1}
    [x, y, az]=a[x, y, z]+\rho(x, y)a z,~~\forall x, y, z\in L, a\in A,
\end{equation}
  \begin{equation}\label{eq:Rinhart2}
 \rho(ax, y)=\rho(x,ay)=a\rho(x, y),~~\forall x, y\in L, a\in A,
 \end{equation}
then  $(L, A, \rho)$ is  called {\bf a 3-Lie-Rinehart algebra}. And  if $\rho=0$, then  $(L, A)$ is called {\bf a 3-Lie $A$-algebra.}
\end{defn}

Let $G$ be a Lie algebra over $F$, $G$ is an $A$-module and $(A, \rho)$ be  $G$-module.
If   $\rho(G)\subseteq Der(A)$, and
\begin{equation}\label{eq:RinhartL1}
    [x, az]=a[x,z]+\rho(x)a z,~ \rho(ax)=a\rho(x),~\forall x,z\in G, a\in A,
\end{equation}
  then  $(G, \rho)$ is a Lie-Rinehart algebra \cite{Cas}. And  if $\rho=0$, then  $G$ is called  a Lie $A$-algebra.

In the following, $Der(L)$ denotes the derivation algebra of  3-Lie algebra $L$,  and
  \begin{equation}\label{eq:Ader}
  Der_A(L): =\{ D~ | ~D\in Der(L), \mbox{and}~ D(ax)=aD(x), \forall a\in A, x\in L\}.
  \end{equation}

For convenience, for all $x_i\in L, b\in A, 1\leq i\leq 4,$  denote

\begin{equation}\label{eq:N}
\begin{split}
   N(x_{1},x_{2},x_{3},x_{4}) :=&\rho(x_{1},x_{2})\rho(x_{3},x_{4})+\rho(x_{3},x_{4})\rho(x_{1},x_{2})+\rho(x_{2},x_{3})\rho(x_{1},x_{4})\\
   &+\rho(x_{1},x_{4})\rho(x_{2},x_{3})+\rho(x_{3},x_{1})\rho(x_{2},x_{4})+\rho(x_{2},x_{4})\rho(x_{3},x_{1}).\\
   R(x_{1},x_{2},x_{3},x_{4}):=&(\rho(x_{1},x_{2})b)\rho(x_{3},x_{4})+(\rho(x_{3},x_{1})b)\rho(x_{2},x_{4})\\
   & +(\rho(x_{2},x_{3})b)\rho(x_{1},x_{4}).\\
\end{split}
  \end{equation}

\begin{theorem}
Let  $(L, A, \rho)$ be  a 3-Lie-Rinehart algebra over $A$. Then the following identities hold,  for all $a, a_i\in A, x_i\in L, 1\leq i\leq 5,$
\begin{equation}\label{eq:(1)}
\begin{split}
 & \rho(x_{2},x_{3})(a_{4}a_{5})[x_{1},x_{2},x_{3}]+\rho(x_{3},x_{1})(a_{4}a_{5})[x_{2},x_{4},x_{5}]
 +\rho(x_{1},x_{2})(a_{4}a_{5})[x_{3},x_{4},x_{5}]\\
 &+a_{4}(\rho(x_{1},x_{4})a_{5}[x_{5},x_{2},x_{3}] +\rho(x_{2},x_{4})a_{5}[x_{5},x_{3},x_{1}]+\rho(x_{3},x_{4})a_{5}[x_{5},x_{1},x_{2}])\\
 &- a_{5}(\rho(x_{1},x_{5})a_{4}[x_{4},x_{2},x_{3}]+\rho(x_{2},x_{5})a_{4}[x_{4},x_{3},x_{1}]+\rho(x_{3},x_{5})a_{4}[x_{4},x_{1},x_{2}])=0.\\  \end{split}
  \end{equation}

\begin{equation}\label{eq:(2)}
\begin{split}
  &  \rho(x_{2},x_{3})a[x_{1},x_{4},x_{5}]+\rho(x_{1},x_{4})a[x_{2},x_{3},x_{5}] +\rho(x_{3},x_{1})a[x_{2},x_{4},x_{5}]\\
  &+\rho(x_{2},x_{4})a[x_{3},x_{1},x_{5}]+\rho(x_{1},x_{2})a[x_{3},x_{4},x_{5}]+\rho(x_{3},x_{4})a[x_{1},x_{2},x_{5}]=0.\\
   \end{split}
  \end{equation}

\begin{equation}\label{eq:(3)}
 N(x_{1},x_{2},x_{3},x_{4})=0, ~~  R(x_{1},x_{2},x_{3},x_{4})=0.
  \end{equation}
\end{theorem}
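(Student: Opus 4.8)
The plan is to derive all three identities \eqref{eq:(1)}, \eqref{eq:(2)}, \eqref{eq:(3)} by systematically exploiting the compatibility axioms \eqref{eq:Rinhart1}, \eqref{eq:Rinhart2} together with the 3-Lie algebra module axioms \eqref{eq:mod1}, \eqref{eq:mod2} and the fundamental Jacobi identity \eqref{eq:jacobi}. The guiding principle throughout is that each identity should emerge from computing one fixed expression in two different ways: once by pushing an element of $A$ through a bracket using the Leibniz-type rule \eqref{eq:Rinhart1}, and once by first applying the 3-Lie Jacobi identity \eqref{eq:jacobi} and then distributing the scalar. The discrepancy between the two computations, after using that $\rho(x,y)\in Der(A)$ and the $A$-bilinearity \eqref{eq:Rinhart2}, must vanish, and that vanishing is exactly the asserted identity.

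For identity \eqref{eq:(2)}, which I would prove first since it looks the most basic, I would start from the fundamental identity \eqref{eq:jacobi} applied to the bracket $[[x_1,x_2,x_3],x_4,x_5]$ and its expansions, but with one of the slots scaled by $a\in A$. Concretely, I would compute $[x_1,x_2,a[x_3,x_4,x_5]]$ (or a suitable symmetric combination) in two ways: expanding the inner bracket's scalar out via \eqref{eq:Rinhart1} to produce terms $a[x_1,x_2,[x_3,x_4,x_5]]$ plus a $\rho$-correction, versus applying \eqref{eq:jacobi} first. Taking the appropriate alternating sum over the indices so that all the genuine triple-bracket-of-brackets terms cancel by \eqref{eq:jacobi}, what survives is precisely the sum of six $\rho(x_i,x_j)a[\,\cdots]$ terms in \eqref{eq:(2)}. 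The sign pattern and the cyclic structure should be tracked carefully using the skew-symmetry of the bracket in its three arguments.

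Identity \eqref{eq:(1)} I expect to be the genuinely laborious one: it involves two scalars $a_4,a_5$ and five elements, so the computation should proceed by applying \eqref{eq:Rinhart1} twice (peeling off $a_5$ then $a_4$, or treating the product $a_4a_5$ via the derivation property of $\rho$) and then invoking \eqref{eq:(2)} to collapse the purely bracketed remainder. I anticipate that \eqref{eq:(1)} is in fact a consequence of \eqref{eq:(2)} after multiplying through by scalars and using that each $\rho(x,y)$ is a derivation of $A$, so the Leibniz expansion $\rho(x,y)(a_4a_5)=(\rho(x,y)a_4)a_5+a_4(\rho(x,y)a_5)$ feeds directly into the first line. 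The bookkeeping of which terms carry $\rho(x_i,x_j)(a_4a_5)$ versus the mixed $a_4\rho(\cdot)a_5$ and $a_5\rho(\cdot)a_4$ terms is where errors are most likely, so I would organize the proof by isolating coefficients of $a_4a_5$, of $(\rho\cdot)a_5$, and of $(\rho\cdot)a_4$ separately and checking each group vanishes via \eqref{eq:(2)}.

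For \eqref{eq:(3)}, the two relations $N=0$ and $R=0$ from \eqref{eq:N} should follow by specializing the module identities to the Rinehart setting. The relation $N(x_1,x_2,x_3,x_4)=0$ I would obtain by symmetrizing \eqref{eq:mod2}: writing down \eqref{eq:mod2} and comparing it with the version obtained by swapping $(x_1,x_2)\leftrightarrow(x_3,x_4)$, adding the two, and using \eqref{eq:mod1} to rewrite the commutator $[\rho(x_1,x_2),\rho(x_3,x_4)]$; the symmetric sum of compositions that remains is exactly $N$, and it must equal a symmetric combination of $\rho([x_i,x_j,x_k],x_\ell)$ terms that cancels by the skew-symmetry of the bracket. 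The relation $R(x_1,x_2,x_3,x_4)=0$ I would derive by applying both sides of \eqref{eq:mod2} to a scalar $b\in A$ and using \eqref{eq:Rinhart2} together with the derivation property to separate the action on $b$ from the action on the module, the surviving ``derivative-of-$b$'' part giving precisely $R$. The main obstacle across the whole theorem is the sheer combinatorial control of signs and index cyclings; I would mitigate this by proving \eqref{eq:(2)} cleanly first and then reusing it as the engine for \eqref{eq:(1)}, treating \eqref{eq:(3)} as the separate module-theoretic consequence.
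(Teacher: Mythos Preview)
Your proposal is correct and follows essentially the same approach as the paper, whose own proof consists of the single sentence ``Apply Eqs \eqref{eq:jacobi}, \eqref{eq:mod1}, \eqref{eq:mod2}, \eqref{eq:Rinhart1}, \eqref{eq:Rinhart2} and \eqref{eq:N}, and a direct computation.'' Your outline is precisely a reasonable elaboration of that direct computation; one small adjustment is that for $R=0$ the cleanest route is to substitute $bx_4$ for $x_4$ in \eqref{eq:mod2} (rather than evaluating \eqref{eq:mod2} at $b$) and then use \eqref{eq:Rinhart2} together with $\rho(x_i,x_j)\in\Der(A)$ to isolate $R$.
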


\begin{proof} Apply Eqs \eqref{eq:jacobi}, \eqref{eq:mod1}, \eqref{eq:mod2}, \eqref{eq:Rinhart1},  \eqref{eq:Rinhart2} and \eqref{eq:N}, and a direct computation.
\end{proof}

From Definition \ref{defin:rinehart}, if $B$ is a subalgebra of the 3-Lie algebra $L$ satisfying $ax\in B$ for all $a\in A, x\in B$,
then $(B, A, \rho|_{B\wedge B})$ is a 3-Lie-Rinehart algebra, which is called {\bf the subalgebra of the 3-Lie-Rinehart algebra $(L, A, \rho)$}.

If $B$ is an ideal of the 3-Lie algebra $L$ and  satisfies $ax\in B$ for all $a\in A, x\in B$, then $(B, A, \rho|_{B\wedge B})$ is called {\bf a Hypo-ideal}  of the  3-Lie-Rinehart algebra $(L, A, \rho)$, in addition,
if $\rho|_{B\wedge L}=0$, then $(B, A)$ is a 3-Lie A-algebra, which is called {\bf an ideal of the 3-Lie-Rinehart algebra $(L, A, \rho)$}.

Denotes
\begin{equation*}
Z_L(A):= \{ a ~|~a\in A, ax=0, \forall x\in L\},~~  Z_{\rho}(L):= \{ x ~|~x\in L, [x, L, L]=0, \rho(x, L)=0\}.
\end{equation*}

\begin{theorem}\label{thm:prop1}
  Let $B$ and  $C$ be ideals of 3-Lie-Rinehart algebra $(L, A, \rho)$.  Then

 $ 1)$ $(L/B, A, \tilde{\rho})$ is a $3$-Lie-Rinehart algebra, where $\tilde{\rho}: (L/B)\wedge (L/B)\rightarrow Der(A)$ and  $\tilde{\rho}(x_1+B,x_2+B)=\rho(x_1,x_2)$, $a(x_1+B)=ax_1+B$, $\forall x_1, x_2\in L, a\in A$.

 $2)$ $B+C$ and $B\cap C$ are ideals of  $(L, A, \rho)$.

 $3)$ $ Z_{\rho}(L)$ is an ideal of the 3-Lie-Rinehart algebra $(L, A, \rho)$, which is called {\bf the center} of  $(L, A, \rho)$, and $Z_L(A)$ is an ideal of $A$.
\end{theorem}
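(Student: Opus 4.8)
The plan is to handle the three items separately, and in each case to verify the three defining conditions of an ideal of $(L,A,\rho)$ recorded in the paragraph preceding the statement: that the subset is an ideal of the underlying 3-Lie algebra $L$, that it is an $A$-submodule, and that $\rho$ vanishes on it against all of $L$. Items $1)$ and $2)$ amount to a transport-of-structure together with these three routine checks, whereas item $3)$ needs a short computation with the Rinehart identity \eqref{eq:Rinhart1}; I expect that computation to be the only delicate point.

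For $1)$ I would first show that each of the three pieces of data on $L/B$ is well defined. The induced bracket $[x_1+B,x_2+B,x_3+B]=[x_1,x_2,x_3]+B$ is well defined since $B$ is an ideal of the 3-Lie algebra $L$, and $a(x+B)=ax+B$ is well defined since $B$ is an $A$-submodule. The key point is that $\tilde\rho$ is well defined: writing $x_i'=x_i+b_i$ with $b_i\in B$ and expanding $\rho(x_1',x_2')$ by bilinearity, every term involving some $b_i$ vanishes because $\rho|_{B\wedge L}=0$ and $\rho$ is skew, so $\rho(x_1',x_2')=\rho(x_1,x_2)$. After that, the module axioms \eqref{eq:mod1}, \eqref{eq:mod2} and the Rinehart axioms \eqref{eq:Rinhart1}, \eqref{eq:Rinhart2} for $(L/B,A,\tilde\rho)$ all follow by applying the corresponding identity for $(L,A,\rho)$ to representatives and reducing modulo $B$; and $\tilde\rho$ takes values in $Der(A)$ because $\tilde\rho(x_1+B,x_2+B)=\rho(x_1,x_2)$.

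For $2)$ I would use that a sum and an intersection of ideals of the 3-Lie algebra $L$ are again ideals of $L$, and then check the two extra conditions. For $B+C$: it is an $A$-submodule since $A(B+C)=AB+AC\subseteq B+C$, and $\rho(b+c,y)=\rho(b,y)+\rho(c,y)=0$ for $b\in B$, $c\in C$, $y\in L$. For $B\cap C$: it is an $A$-submodule since $ax\in B$ and $ax\in C$ whenever $x\in B\cap C$, and $\rho|_{(B\cap C)\wedge L}=0$ because $B\cap C\subseteq B$.

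For $3)$, $Z_\rho(L)$ is a subspace, it is an ideal of the 3-Lie algebra $L$ because $[Z_\rho(L),L,L]=0\subseteq Z_\rho(L)$ by definition, and $\rho|_{Z_\rho(L)\wedge L}=0$ again by definition. The substantive step is closure under the $A$-action: for $x\in Z_\rho(L)$ and $a\in A$, \eqref{eq:Rinhart2} gives $\rho(ax,y)=a\rho(x,y)=0$, while transporting \eqref{eq:Rinhart1} to the first slot via skew symmetry (so $[ax,y,z]=[y,z,ax]$, then $[y,z,ax]=a[y,z,x]+(\rho(y,z)a)x$ and $[y,z,x]=[x,y,z]$) yields $[ax,y,z]=a[x,y,z]+(\rho(y,z)a)x=(\rho(y,z)a)x$. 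The main obstacle is exactly controlling this residual term $(\rho(y,z)a)x$: one must argue it vanishes so that $[ax,L,L]=0$ and hence $ax\in Z_\rho(L)$, and this is the step I would scrutinize most, expecting it to rely on the structural identities \eqref{eq:(1)}--\eqref{eq:(3)} from the previous theorem (or on an additional hypothesis on $\rho$). Finally, $Z_L(A)$ is an ideal of $A$: it is a subspace, and for $a\in Z_L(A)$, $b\in A$ and $x\in L$ the module axiom $\alpha(ab)=\alpha(a)\alpha(b)$ gives $(ab)x=a(bx)=0$, so $ab\in Z_L(A)$.
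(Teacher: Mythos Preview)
Your argument for parts 1) and 2) is correct and coincides with the paper's: the paper writes out only the verification of 1) (well-definedness of $\tilde\rho$ from $\rho(B,L)=0$, then \eqref{eq:mod1}, \eqref{eq:mod2}, \eqref{eq:Rinhart1} on representatives) and dismisses 2) and 3) with ``follows from a direct computation.'' Your treatment of $Z_L(A)$ is also fine.

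For part 3), however, you have located a real difficulty that the paper's one-line proof does not address. The residual term $(\rho(y,z)a)\,x$ you isolate must vanish in order that $ax\in Z_\rho(L)$, but none of the available identities force this. Substituting the central element $x$ into any slot of \eqref{eq:(1)}, \eqref{eq:(2)} or \eqref{eq:(3)} annihilates every summand simultaneously (each carries a factor $[x,\cdot,\cdot]$ or $\rho(x,\cdot)$), so those identities collapse to $0=0$; the module axioms \eqref{eq:mod1}, \eqref{eq:mod2} behave the same way when $x$ is inserted; and \eqref{eq:Rinhart1}, \eqref{eq:Rinhart2} are precisely what produced the obstruction. In general there is no reason the image $\rho(L,L)(A)\subseteq A$ should annihilate $Z_\rho(L)$, so closure of $Z_\rho(L)$ under the $A$-action---and hence its being an ideal in the sense defined just before the theorem---does not follow from the stated hypotheses. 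Your instinct to flag this step was correct; the honest conclusion is that either an additional hypothesis (e.g.\ faithfulness of the $A$-action on $L$) is needed, or the assertion about $Z_\rho(L)$ should be weakened.
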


\begin{proof}
  It is clear that quotient algebra $L/B$ is  an $A$-module. Since $\rho(B, L)=0$, $\tilde{\rho}$ is valid.

   For all $x_i\in L, b\in A, 1\leq x_i\leq 4,$
\begin{equation*}
\begin{split}
&[\tilde{\rho}(x_1+B,x_2+B),\tilde{\rho}(x_3+B,x_4+B)]=[\rho(x_1,x_2), \rho(x_3,x_4)]\\
=&\rho([x_1,x_2,x_3],x_4)+\rho(x_3,[x_1,x_2,x_4])\\
=&\tilde{\rho}([x_1,x_2,x_3]+B,x_4+B)+\tilde{\rho}(x_3+B,[x_1,x_2,x_4]+B).\\
\end{split}
\end{equation*}
  By a similar discussion to the above, we have
\begin{equation*}
\begin{split}
&\tilde{\rho}([x_1+B,x_2+B,x_3+B],x_4+B)=\tilde{\rho}(x_1+B,x_2+B)\tilde{\rho}(x_3+B,x_4+B)\\
&+\tilde{\rho}(x_2+B,x_3+B)\tilde{\rho}(x_1+B,x_4+B)+\tilde{\rho}(x_3+B,x_1+B)\tilde{\rho}(x_2+B,x_4+B).\\
\end{split}
\end{equation*}

Now for any $b\in A$,
\begin{equation*}
\begin{split}
&[x_1+B,x_2+B,b(x_3+B)]=[x_1+B,x_2+B, bx_3+B]\\
=&[x_1,x_2,bx_3]+B=(b[x_1,x_2,x_3]+B)+ (\rho(x_1, x_2)b x_3+B)\\
=&b [x_1+B,x_2+B,x_3+B]+\tilde{\rho}(x_1+B, x_2+B)b (x_3+B).\\
\end{split}
\end{equation*}
Therefore, $(L/B, A, \tilde{\rho})$ is a 3-Lie-Rinehart algebra. The results 2) and 3) follows from a direct computation.
\end{proof}

\begin{defn}\label{defin:homomorph}
  Let $(L, A, \rho)$ and $(L', A, \rho')$ be 3-Lie-Rinehart algebras, $f:L\rightarrow L'$ be a 3-Lie algebra homomorphism. If $f$ satisfies that
  \begin{equation*}
  f(ax)=af(x), ~~ \rho'(f(x),f(y))=\rho(x,y), ~~ \forall a\in A, x, y\in L,
  \end{equation*}
  then $f$ is called a {\bf 3-Lie-Rinehart algebra homomorphism}, furthermore,
  \\1) if $f$ is a bijection, then $f$ is called an isomorphism; \\2) if $f$ is epimorphism and $Ker(f)\subseteq Z(L)$, then $f$ is called {\bf a central epimorphism}.
  \end{defn}

\begin{theorem}\label{thm:prop2}  Let $(L, A, \rho)$ and $(L', A, \rho')$ be 3-Lie-Rinehart algebras, and $f:L\rightarrow L'$ be a 3-Lie-Rinehart algebra homomorphism.  Then we have the following conclusions:

  1) $Ker(f)=\{x~ |~ x\in L, f(x)=0\}$ is an ideal of 3-Lie-Rinehart algebra $(L, A, \rho)$.

  2) $(f(L), A, \rho'|_{f(L)\wedge f(L)})$ is a subalgebra of  $(L', A, \rho')$ which is isomorphic to the quotient $3$-Lie-Rinehart algebra $(L/Ker(f), A, \tilde{\rho})$.

  3) There is an one to one correspondence between the subalgebras of  $(L, A, \rho)$ containing $Ker(f)$ to  the  subalgebras of $(f(L), A, \rho'|_{f(L)\wedge f(L)})$, and ideals  correspond to ideals.
  \end{theorem}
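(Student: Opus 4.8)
The plan is to establish the three parts as the 3-Lie-Rinehart analogues of the kernel/image, first-isomorphism, and correspondence theorems. In each case the statement about the underlying 3-Lie algebra is a routine transfer along $f$, and the real work lies in verifying the two extra pieces of data that the definitions of \emph{subalgebra} and \emph{ideal} require: compatibility with the $A$-action and the vanishing of $\rho$ on the relevant subspace.

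For part 1), recall that to call $Ker(f)$ an ideal I must check that it is an ideal of the 3-Lie algebra $L$, that it is closed under the $A$-action, and that $\rho|_{Ker(f)\wedge L}=0$. The 3-Lie ideal property follows from $f([x,y,z])=[f(x),f(y),f(z)]$ being zero whenever $x\in Ker(f)$; closure under the $A$-action follows from $f(ax)=af(x)=0$; and the decisive point, the vanishing of $\rho$, follows from the homomorphism condition $\rho(x,y)=\rho'(f(x),f(y))$ together with bilinearity of $\rho'$, since $f(x)=0$ forces $\rho'(f(x),f(y))=0$ for every $y\in L$.

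For part 2), I would first note that $f(L)$ is a subalgebra of $(L',A,\rho')$: it is closed under the bracket because $[f(x),f(y),f(z)]=f([x,y,z])$, and under the $A$-action because $af(x)=f(ax)$. Then I would show that the induced map $\bar{f}\colon L/Ker(f)\to f(L)$, $x+Ker(f)\mapsto f(x)$, is a 3-Lie-Rinehart isomorphism, using Theorem \ref{thm:prop1} to know that $(L/Ker(f),A,\tilde{\rho})$ is a 3-Lie-Rinehart algebra. Well-definedness and injectivity are immediate from the definition of $Ker(f)$; surjectivity holds by construction; bracket- and $A$-compatibility are inherited directly from $f$; and the $\rho$-compatibility is precisely $\rho'(f(x),f(y))=\rho(x,y)=\tilde{\rho}(x+Ker(f),y+Ker(f))$.

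For part 3), I would define the correspondence by $S\mapsto f(S)$ on subalgebras $S$ containing $Ker(f)$, with inverse $T\mapsto f^{-1}(T)$, working with the surjection $f\colon L\to f(L)$. That these maps are mutually inverse uses $f^{-1}(f(S))=S$ (valid because $S\supseteq Ker(f)$) and $f(f^{-1}(T))=T$ (valid because $f$ is onto $f(L)$), while closure under the bracket and the $A$-action transfers along $f$ in both directions. I expect the main obstacle to be the final claim that ideals correspond to ideals rather than merely to subalgebras, because an ideal carries the extra condition $\rho|_{S\wedge L}=0$. I would settle this by showing in both directions that $\rho|_{S\wedge L}=0$ is equivalent to $\rho'|_{f(S)\wedge f(L)}=0$, via $\rho(s,x)=\rho'(f(s),f(x))$ and the surjectivity of $f$ onto $f(L)$, together with the 3-Lie ideal transfer $f([S,L,L])=[f(S),f(L),f(L)]$ and the inclusion $S\supseteq Ker(f)$ needed to pull memberships back through $f$.
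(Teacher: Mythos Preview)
Your proposal is correct and is precisely the direct verification that the paper has in mind; the paper's own proof consists of the single sentence ``The result follows from a direct computation'' with no further details. In particular, your identification of the extra condition $\rho|_{Ker(f)\wedge L}=0$ (via $\rho(x,y)=\rho'(f(x),f(y))$) as the only nonstandard point in part 1), and of the transfer of $\rho|_{S\wedge L}=0$ across $f$ as the only nonstandard point in the ideal correspondence of part 3), matches exactly what the paper's definitions require.
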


\begin{proof} The result follows from a direct computation.
 \end{proof}

Now we construct some new 3-Lie-Rinehart algebras from a given 3-Lie-Rinehart algebra $(L, A, \rho)$.

\begin{theorem}\label{thm:LtensorA}
Let $(L, A, \rho)$ be a 3-Lie-Rinehart algebra and $B=A\otimes L=\{ax~|~a\in A, x\in L\}$. Then $(B, A, \rho_1)$ is  3-Lie-Rinehart algebra, where the multiplication is,  for all $x_1, x_2, x_3\in L$, $ a_1, a_2, a_3\in A,$

\begin{equation}\label{eq:tensor}
\begin{split}
   [a_{1}x_{1},a_{2}x_{2},a_{3}x_{3}]&=a_{1}a_{2}a_{3}[x_{1},x_{2},x_{3}]+a_{1}a_{2}\rho(x_{1},x_{2})a_{3}x_{3}+a_{2}a_{3}\rho(x_{2},x_{3})a_{1}x_{1}\\
   &+a_{3}a_{1}\rho(x_{3},x_{1})a_{2}x_{2},\\
\end{split}
  \end{equation}
and $\rho_1:(A\otimes L)\wedge(A\otimes L)\rightarrow Der(A)$ is defined by ~~~
   $ \rho_1(a_{1}x_{1},a_{2}x_{2})=a_{1}a_{2}\rho(x_{1},x_{2}).
$
\end{theorem}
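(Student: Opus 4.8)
The plan is to verify, in turn, the four ingredients required by Definition \ref{defin:rinehart}: that $(B,[\,,\,,\,])$ is a 3-Lie algebra, that $B$ is an $A$-module, that $(A,\rho_1)$ is a $B$-module with $\rho_1(B\wedge B)\subseteq Der(A)$, and finally the two Rinehart compatibility relations \eqref{eq:Rinhart1} and \eqref{eq:Rinhart2}. Throughout, elements of $B$ are $F$-linear combinations of simple products $ax$, so each axiom need only be checked on such generators and then extended by $F$-multilinearity; well-definedness of both the bracket \eqref{eq:tensor} and of $\rho_1$ is immediate once one observes that each expression is $F$-linear in every slot $(a_i,x_i)$ separately (the derivation terms $\rho(x,y)a$ are linear in $a$). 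The $A$-module structure on $B$ is the obvious one, $a(a_1x_1)=(aa_1)x_1$, and since $A$ is commutative one has $A\cdot Der(A)\subseteq Der(A)$, whence $\rho_1(a_1x_1,a_2x_2)=a_1a_2\rho(x_1,x_2)\in Der(A)$; this settles the last inclusion at once.

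Next I would dispose of the ``easy'' relations. Skew-symmetry of \eqref{eq:tensor} is visible from the skew-symmetry of $[\,,\,,\,]$ and of $\rho$ on $L$ together with the cyclic arrangement of the three $\rho$-terms. Relation \eqref{eq:Rinhart2} for $\rho_1$ is immediate from the definition of $\rho_1$ and commutativity of $A$. For \eqref{eq:Rinhart1} I would expand $[a_1x_1,a_2x_2,a(a_3x_3)]$ by \eqref{eq:tensor}, apply the Leibniz rule to the factors of the form $\rho(\,,\,)(aa_3)$ using $\rho(x,y)\in Der(A)$, and collect: the Leibniz terms in which a derivation hits the extra factor $a$ assemble precisely into $\rho_1(a_1x_1,a_2x_2)a\cdot(a_3x_3)$, while the remaining terms give $a[a_1x_1,a_2x_2,a_3x_3]$. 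The two module axioms \eqref{eq:mod1} and \eqref{eq:mod2} for $\rho_1$ are checked by pulling the $A$-coefficients out through \eqref{eq:Rinhart2}, whereupon the composites $\rho(x_i,x_j)\rho(x_k,x_l)$ regroup by the corresponding axioms for $\rho$ while the cross terms produced by the derivation property are exactly the combinations $R(x_1,x_2,x_3,x_4)$ and $N(x_1,x_2,x_3,x_4)$, both of which vanish by \eqref{eq:(3)}.

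The main obstacle is the fundamental identity \eqref{eq:jacobi} for the bracket \eqref{eq:tensor}. Substituting \eqref{eq:tensor} into both sides of \eqref{eq:jacobi} with arguments $a_ix_i$ produces a large sum which I would sort into three groups according to how many derivation symbols $\rho$ appear and on which coefficients they act: first, the purely bracketed terms with coefficient $a_1\cdots a_5$, which cancel by the fundamental identity \eqref{eq:jacobi} in $L$; second, the terms carrying exactly one $\rho$, i.e. a single derivation differentiating one $A$-coefficient, which are precisely the combinations controlled by the auxiliary identities \eqref{eq:(1)} and \eqref{eq:(2)}; and third, the terms carrying two nested $\rho$'s, governed by the vanishing of $N$ and $R$ in \eqref{eq:(3)}. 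Thus the entire verification hinges on the three auxiliary identities established in the first theorem of this section, which were recorded for exactly this purpose. The genuinely delicate part is the bookkeeping of the one-$\rho$ terms in the second group, keeping track of which coefficient each derivation differentiates and matching the outcome against the cyclic patterns of \eqref{eq:(1)} and \eqref{eq:(2)}; once those are aligned the identity closes, and $(B,A,\rho_1)$ is a 3-Lie-Rinehart algebra.
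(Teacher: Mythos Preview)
Your proposal is correct and follows essentially the same route as the paper's proof: both reduce the Jacobi/fundamental identity for the bracket \eqref{eq:tensor} to the auxiliary identities \eqref{eq:(1)}, \eqref{eq:(2)}, \eqref{eq:(3)}, and then verify the module axioms \eqref{eq:mod1}, \eqref{eq:mod2} and the Rinehart compatibility \eqref{eq:Rinhart1} by direct expansion. Your account is in fact more explicit than the paper's (which simply cites \eqref{eq:(1)}--\eqref{eq:(3)} for the 3-Lie structure without spelling out the sorting by number of $\rho$'s); one small imprecision is that for the module axioms \eqref{eq:mod1} and \eqref{eq:mod2} only the $R$-identity from \eqref{eq:(3)} is actually needed, not $N$.
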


\begin{proof}
 By Eqs \eqref{eq:(1)}, \eqref{eq:(2)} and \eqref{eq:(3)}, $B$ is a $3$-Lie algebra in the multiplication \eqref{eq:tensor}, and $B$ is an $A$-module. Thanks to Eqs \eqref{eq:mod1} and \eqref{eq:mod2}, for all $x_1, x_2, x_3\in L$ and $b, a_1, a_2, a_3\in A,$

\begin{equation*}
\begin{split}
&[\rho_1(a_1x_1,a_2x_2),\rho_1(a_3x_3,a_4x_4)]=[a_1a_2\rho(x_1,x_2),a_3a_4\rho(x_3,x_4)]\\
=&a_1a_2\rho(x_1,x_2)(a_3a_4\rho(x_3,x_4))-a_3a_4\rho(x_3,x_4)(a_1a_2\rho(x_1,x_2))\\
=&a_1a_2a_3\rho(x_1,x_2)a_4\rho(x_3,x_4)+a_1a_2a_4\rho(x_1,x_2)a_3\rho(x_3,x_4)\\
&+a_1a_2a_3a_4\rho(x_1,x_2)\rho(x_3,x_4)-a_1a_3a_4\rho(x_3,x_4)a_2\rho(x_1,x_2)\\
&-a_2a_3a_4\rho(x_3,x_4)a_1\rho(x_1,x_2)-a_1a_2a_3a_4\rho(x_3,x_4)\rho(x_1,x_2)\\
=&\rho_1([a_1x_1,a_2x_2,a_3x_3],a_4x_4)+\rho_1(a_3x_3,[a_1x_1,a_2x_2,a_4x_4]).
\end{split}
\end{equation*}

\begin{equation*}
\begin{split}
&\rho_1(a_1x_1,a_2x_2)\rho_1(a_3x_3,a_4x_4)+\rho_1(a_2x_2,a_3x_3)\rho_1(a_1x_1,a_4x_4)\\
&+\rho_1(a_3x_3,a_1x_1)\rho_1(a_2x_2,a_4x_4)\\
=&a_1a_2\rho(x_1,x_2)(a_3a_4\rho(x_3,x_4))+a_2a_3\rho(x_2,x_3)(a_1a_4\rho(x_1,x_4))\\
&+a_1a_3\rho(x_3,x_1)(a_2a_4\rho(x_2,x_4))\\
=&a_1a_2a_3\rho(x_1,x_2)a_4\rho(x_3,x_4)+a_1a_2a_4\rho(x_1,x_2)a_3\rho(x_3,x_4)\\
&+a_1a_2a_3a_4\rho(x_1,x_2)\rho(x_3,x_4)+a_1a_2a_3\rho(x_2,x_3)a_4\rho(x_1,x_4)\\
&+a_2a_3a_4\rho(x_2,x_3)a_1\rho(x_1,x_4)+a_1a_2a_3a_4\rho(x_2,x_3)\rho(x_1,x_4)\\
&+a_1a_2a_3\rho(x_3,x_1)a_4\rho(x_2,x_4)+a_1a_3a_4\rho(x_3,x_1)a_2\rho(x_2,x_4)\\
&+a_1a_2a_3a_4\rho(x_3,x_1)\rho(x_2,x_4)\\
=&\rho_1([a_1x_1,a_2x_2,a_3x_3],a_4x_4).\\
\end{split}
\end{equation*}
It follows that $(A, \rho_1)$ is a 3-Lie algebra $B$-module.

By Definition \ref{defin:rinehart} and Eq \eqref{eq:tensor},
\begin{equation*}
\begin{split}
[a_1x_1,a_2x_2,b(a_3x_3)]=&a_1a_2a_3b[x_1,x_2,x_3]+a_1a_2\rho(x_1,x_2)(ba_3)x_3\\
&+a_2a_3b\rho(x_2,x_3)a_1x_1+a_1a_3b\rho(x_3,x_1)a_2x_2\\
=&b(a_1a_2a_3[x_1,x_2,x_3]+a_1a_2\rho(x_1,x_2)a_3x_3+a_2a_3\rho(x_2,x_3)a_1x_1\\
&+a_1a_3b\rho(x_3,x_1)a_2x_2)+a_1a_2\rho(x_1,x_2)b(a_3x_3)\\
=&b[a_1x_1,a_2x_2,a_3x_3]+\rho_1(a_1x_1,a_2x_2)b(a_3x_3).\\
\end{split}
\end{equation*}
Therefore, $(B,A,\rho_1)$ is a 3-Lie-Rinehart algebra.
 \end{proof}

\begin{theorem}\label{thm:E}
Let $(L, A, \rho)$ be a 3-Lie-Rinehart algebra and $E=\{(x,a)~|~x\in L, a\in A\}$. Then $(E, A, \bar{\rho})$ is a 3-Lie-Rinehart algebra, where $\forall a, b,c\in A, ~~ x, y,z\in L,~~ k\in  F,$
\begin{equation}\label{eq:rtimes}
k(x, a)=(kx, ka), ~~(x,a)+(y,b)=(x+y,a+b),~~
 a(y,b)=(ay,ab),
\end{equation}
\begin{equation}\label{eq:1rtimes}
    [(x,a),(y,b),(z,c)]=([x,y,z],\rho(x,y)c+\rho(y,z)a+\rho(z,x)b),
\end{equation}
\begin{equation}\label{eq:2rtimes}
\bar{\rho}: E\wedge E\rightarrow Der(A),~~
    \bar{\rho}((x,a),(y,b))=\rho(x,y).
\end{equation}
\end{theorem}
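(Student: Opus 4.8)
The plan is to verify, in turn, the four requirements of Definition \ref{defin:rinehart} for the triple $(E, A, \bar\rho)$: that $E$ is a 3-Lie algebra under \eqref{eq:1rtimes}, that $E$ is an $A$-module under \eqref{eq:rtimes}, that $(A, \bar\rho)$ is a 3-Lie algebra $E$-module with $\bar\rho(E\wedge E)\subseteq Der(A)$, and finally that the two Rinehart compatibility identities \eqref{eq:Rinhart1} and \eqref{eq:Rinhart2} hold. The first observation, which trivializes most of this, is that under the identification $(x,a)\leftrightarrow x+a$ the bracket \eqref{eq:1rtimes} is precisely the semidirect-product bracket \eqref{eq:semiproduct} of the 3-Lie algebra $L$ with its module $(A,\rho)$; that is, $E\cong L\ltimes_\rho A$ as a 3-Lie algebra. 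Hence the fundamental identity \eqref{eq:jacobi} for $E$ holds automatically, inherited from the hypothesis that $(A,\rho)$ is an $L$-module. The $A$-module axioms for $E$, with the action $a(y,b)=(ay,ab)$, then reduce to associativity and commutativity of $A$ together with the module axioms already holding on $L$ and on $A$, so this step is immediate.

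Next I would check that $(A,\bar\rho)$ is a 3-Lie algebra $E$-module. Since $\bar\rho((x,a),(y,b))=\rho(x,y)$ by \eqref{eq:2rtimes} depends only on the $L$-components, both representation identities \eqref{eq:mod1} and \eqref{eq:mod2} for $\bar\rho$ collapse: applying $\bar\rho$ to a bracket $[(x_1,a_1),(x_2,a_2),(x_3,a_3)]=([x_1,x_2,x_3],\cdots)$ returns $\rho([x_1,x_2,x_3],x_4)$, and so on, so these two identities for $\bar\rho$ become literally \eqref{eq:mod1} and \eqref{eq:mod2} for $\rho$, which hold by assumption. For the same reason the containment $\bar\rho(E\wedge E)=\rho(L\wedge L)\subseteq Der(A)$ is immediate.

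The remaining and genuinely computational part is the pair of Rinehart conditions, and here lies the main obstacle. Identity \eqref{eq:Rinhart2} for $\bar\rho$ follows at once, since $\bar\rho(c(x,a),(y,b))=\rho(cx,y)=c\rho(x,y)$, and likewise in the other slot, by \eqref{eq:Rinhart2} for $\rho$. The hard step is \eqref{eq:Rinhart1}. I would expand $[(x,a),(y,b),c(z,d)]$ using $c(z,d)=(cz,cd)$ and \eqref{eq:1rtimes}. In the $L$-component, $[x,y,cz]=c[x,y,z]+(\rho(x,y)c)z$ by \eqref{eq:Rinhart1} for $L$, while in the $A$-component the terms $\rho(y,cz)a$ and $\rho(cz,x)b$ each acquire the scalar factor $c$ by \eqref{eq:Rinhart2}. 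The delicate term is $\rho(x,y)(cd)$: to match the right-hand side $c[(x,a),(y,b),(z,d)]+\bar\rho((x,a),(y,b))c\cdot(z,d)$ I must split it via the Leibniz rule $\rho(x,y)(cd)=(\rho(x,y)c)d+c(\rho(x,y)d)$, which is valid precisely because $\rho(x,y)\in Der(A)$.

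This single use of the derivation property is exactly what reconciles the two sides: the summand $(\rho(x,y)c)d$ supplies the second coordinate of $\bar\rho((x,a),(y,b))c\cdot(z,d)=((\rho(x,y)c)z,(\rho(x,y)c)d)$, while the summand $c(\rho(x,y)d)$ is absorbed into the second coordinate of $c[(x,a),(y,b),(z,d)]$. Once the Leibniz rule is invoked, the remaining terms $c\rho(y,z)a$ and $c\rho(z,x)b$ match directly, and the two sides agree coordinatewise, so \eqref{eq:Rinhart1} holds and the verification is complete. I therefore expect the whole argument to hinge on recognizing the semidirect-product structure (making \eqref{eq:jacobi} and the module axioms free) and on the correct bookkeeping of the Leibniz expansion of $\rho(x,y)(cd)$ in \eqref{eq:Rinhart1}.
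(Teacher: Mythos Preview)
Your proposal is correct. The key difference from the paper is in how you handle the fundamental identity \eqref{eq:jacobi} for $E$: you observe that under $(x,a)\leftrightarrow x+a$ the bracket \eqref{eq:1rtimes} is literally the semidirect-product bracket \eqref{eq:semiproduct} of $L$ with its module $(A,\rho)$, so $E\cong L\ltimes_\rho A$ and the Jacobi identity comes for free from the general fact stated after \eqref{eq:semiproduct}. The paper instead verifies \eqref{eq:jacobi} by a direct two-page expansion of both sides. Your route is shorter and more conceptual; the paper's is self-contained and makes no appeal to the earlier semidirect-product discussion. For the remaining steps---checking that $(A,\bar\rho)$ is an $E$-module, that $\bar\rho(E\wedge E)\subseteq Der(A)$, and that \eqref{eq:Rinhart1}--\eqref{eq:Rinhart2} hold---your argument and the paper's coincide: both exploit that $\bar\rho$ sees only the $L$-components, and both reduce the crucial step in \eqref{eq:Rinhart1} to the Leibniz splitting $\rho(x,y)(cd)=(\rho(x,y)c)d+c(\rho(x,y)d)$.
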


\begin{proof} Thanks to Eq \eqref{eq:rtimes}, $E$ is an $A$-module, and the 3-ary linear multiplication defined by Eq \eqref{eq:1rtimes} is skew-symmetric. For all $x_i\in L, a_i\in A$, $1\leq i\leq 5,$ since
\hspace{-2cm} \begin{equation*}
\begin{split}
&[[(x_1,a_1),(x_2,a_2),(x_3,a_3)],(x_4,a_4),(x_5,a_5)]\\
=&[([x_1,x_2,x_3],\rho(x_1,x_2)a_3+\rho(x_2,x_3)a_1+\rho(x_3,x_1)a_2),(x_4,a_4),(x_5,a_5)]\\
=&\big([[x_1,x_2,x_3],x_4,x_5],\rho([x_1,x_2,x_3],x_4)a_5+\rho(x_4,x_5)\rho(x_1,x_2)a_3\\
&+\rho(x_4,x_5)\rho(x_2,x_3)a_1+\rho(x_4,x_5)\rho(x_3,x_1)a_2+\rho(x_5,[x_1,x_2,x_3])a_4\big),\\
\end{split}
\end{equation*}

\begin{equation*}
\begin{split}
&[[(x_1,a_1),(x_4,a_4),(x_5,a_5)],(x_2,a_2),(x_3,a_3)]+[[(x_2,a_2),(x_4,a_4),(x_5,a_5)],(x_3,a_3),(x_1,a_1)]\\
&+[[(x_3,a_3),(x_4,a_4),(x_5,a_5)],(x_1,a_1),(x_2,a_2)]\\
=&\big([[x_1,x_4,x_5],x_2,x_3],\rho([x_1,x_4,x_5],x_2)a_3+\rho(x_2,x_3)(\rho(x_1,x_4)a_5+\rho(x_4,x_5)a_1\\
&+\rho(x_5,x_1)a_4)+\rho(x_3,[x_1,x_4,x_5])a_2\big)-\big([[x_2,x_4,x_5],x_1,x_3],\rho([x_2,x_4,x_5],x_1)a_3\\
&+\rho(x_3,x_1)(\rho(x_2,x_4)a_5+\rho(x_4,x_5)a_2+\rho(x_5,x_2)a_4)-\rho(x_3,[x_2,x_4,x_5])a_1\big)\\
&-\big([[x_3,x_4,x_5],x_2,x_1],\rho([x_3,x_4,x_5],x_2)a_1+\rho(x_1,x_2)(\rho(x_3,x_4)a_5+\rho(x_4,x_5)a_3\\
&+\rho(x_5,x_3)a_4)+\rho(x_1,[x_3,x_4,x_5])a_2\big)\\
=&[[(x_1,a_1),(x_2,a_2),(x_3,a_3)],(x_4,a_4),(x_5,a_5)],
\end{split}
\end{equation*}
 Eq \eqref{eq:jacobi} holds. Therefore,  $E$ is a 3-Lie algebra.

Now we prove that $(A, \bar{\rho})$ is a 3-Lie algebra $E$-module. By Eq \eqref{eq:2rtimes}, we have $\bar{\rho}(E\wedge E)\subseteq Der(A)$, and for all $x_i\in L$, $ a_i\in A$, $1\leq i\leq 4,$
\begin{equation*}
\begin{split}
&\bar{\rho}([(x_1,a_1),(x_2,a_2),(x_3,a_3)],(x_4,a_4))+\bar{\rho}((x_3,a_3),[(x_1,a_1),(x_2,a_2),(x_4,a_4)])\\
=&\bar{\rho}(([x_1,x_2,x_3],\rho(x_1,x_2)a_3+\rho(x_2,x_3)a_1+\rho(x_3,x_1)a_2),(x_4,a_4))\\
&+\bar{\rho}((x_3,a_3),([x_1,x_2,x_4],\rho(x_1,x_2)a_4+\rho(x_2,x_4)a_1+\rho(x_4,x_1)a_2))\\
=&\rho([x_1,x_2,x_3],x_4)+\rho(x_3,[x_1,x_2,x_4])\\
=&[\rho(x_1,x_2),\rho(x_3,x_4)]\\
=&[\bar{\rho}((x_1,a_1),(x_2,a_2)),\bar{\rho}((x_3,a_3),(x_4,a_4))],\\
\end{split}
\end{equation*}
\begin{equation*}
\begin{split}
&\bar{\rho}((x_1,a_1),(x_2,a_2))\bar{\rho}((x_3,a_3),(x_4,a_4))+\bar{\rho}((x_2,a_2),(x_3,a_3))\bar{\rho}((x_1,a_1),(x_4,a_4))\\
&+\bar{\rho}((x_3,a_3),(x_1,a_1))\bar{\rho}((x_2,a_2),(x_4,a_4))\\
=&\rho(x_1,x_2)\rho(x_3,x_4)+\rho(x_2,x_3)\rho(x_1,x_4)+\rho(x_3,x_1)\rho(x_2,x_4)\\
=&\rho([x_1,x_2,x_3],x_4)=\bar{\rho}([(x_1,a_1),(x_2,a_2),(x_3,a_3)],(x_4,a_4)),
\end{split}
\end{equation*}
therefore,  Eq \eqref{eq:mod2} holds,
 and $(A, \bar{\rho})$ is the 3-Lie algebra $E$-module. For all $b\in A$, since
\begin{equation*}
\begin{split}
&[(x_1,a_1),(x_2,a_2),b(x_3,a_3)]=[(x_1,a_1),(x_2,a_2),(bx_3,ba_3)]\\
=&([x_1,x_2,bx_3],\rho(x_1,x_2)(ba_3)+\rho(x_2,bx_3)a_1+\rho(bx_3,x_1)a_2)\\
=&(b[x_1,x_2,x_3]+\rho(x_1,x_2)bx_3, \rho(x_1,x_2)(ba_3)+b\rho(x_2,x_3)a_1+b\rho(x_3,x_1)a_2)\\
=&(b[x_1,x_2,x_3]+\rho(x_1,x_2)bx_3,a_3\rho(x_1,x_2)b+b\rho(x_1,x_2)a_3+b\rho(x_2,x_3)a_1+b\rho(x_3,x_1)a_2)\\
=&b([x_1,x_2,x_3],\rho(x_1,x_2)a_3+\rho(x_2,x_3)a_1+\rho(x_3,x_1)a_2)+(\rho(x_1,x_2)bx_3,a_3\rho(x_1,x_2)b)\\
=&b[(x_1,a_1),(x_2,a_2),(x_3,a_3)]+\bar{\rho}(x_1,x_2)b(x_3,a_3).\\
\end{split}
\end{equation*}
$(E, A, \bar{\rho})$ is a 3-Lie-Rinehart algebra. The proof is complete.
 \end{proof}

Let $L$ be a 3-Lie algebra, $J=\{ x\wedge y ~|~ x, y\in L, [x, y, L]=0\}$. Then $((L\wedge L)/ J, [ , , ]_1)$ is a Lie algebra , where
$(L\wedge L)/ J=\{ \overline{x\wedge y}=x\wedge y+J ~|~ \forall x, y\in L\}$,
  \begin{equation}\label{eq:wedge}
  [\overline{x\wedge y}, \overline{x'\wedge y'}]_1=\overline{[x, y, x']\wedge y'}+\overline{x'\wedge [x, y,  y']},  \quad \forall \overline{x\wedge y}, \overline{x'\wedge y'}\in(L\wedge L)/ J.
  \end{equation}

 Let  $(L, A, \rho)$ be a 3-Lie-Rinehart algebra. Then $(A, \rho)$ is a 3-Lie algebra $L$-module. For  $x, y\in L$, if $[x, y, L]=0$,
 them by \eqref{eq:Rinhart1}, for all $a\in A$ and $z\in L$, we have
 $$\rho(x, y)a z=[x, y, az]-a[x, y, z]=0.$$
 Therefore, $\rho(x, y)=0$, and $\rho_2: ((L\wedge L)/ J, [ , , ]_1)\rightarrow Der(A)$ defined by
 $$
 \rho_2(\overline{x\wedge y})=\rho(x, y), ~~~ \forall x, y\in L
 $$
 is a Lie algebra homomorphism, that is, $(A, \rho_2)$ is a Lie algebra $(L\wedge L)/ J$-module.

    {\bf In the following, $L\wedge L$ denotes  the Lie algebra $((L\wedge L)/ J, [ , , ]_1)$, and $\forall x, y\in L, $ $x\wedge y$ denotes $\overline{x\wedge y}$. And
    \eqref{eq:wedge} can be written as}
    \begin{equation}\label{eq:wedge1}
  [x\wedge y, x'\wedge y']=[x, y, x']\wedge y'+x'\wedge [x, y,  y'],  \quad \forall x,y, x',  y'\in L.
  \end{equation}

\begin{theorem}\label{thm:LL}
  Let $(L,A,\rho)$ be a 3-Lie-Rinehart algebra. Then  $(L\wedge L, \rho_2)$ is a Lie-Rinehart algebra {\small\cite{Cas}}, where for all $x_i\wedge y_i\in  L\wedge  L$, $i=1, 2,$
     \begin{equation}
\label{eq:actionad}
b\cdot x\wedge y=\frac{1}{2}\big((bx)\wedge y+x\wedge(by)\big), ~~~\forall b\in A, x\wedge y\in L\wedge L,
\end{equation}
\begin{equation} \label{eq:trho}
\rho_2: L\wedge L\rightarrow Der(A),  ~\rho_2(x\wedge y)=\rho(x,y), ~ \forall x\wedge y\in  L\wedge L.
   \end{equation}
\end{theorem}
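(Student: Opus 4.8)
The plan is to verify, one at a time, the defining conditions of a Lie--Rinehart algebra recorded in \eqref{eq:RinhartL1}, reusing as much of the structure assembled just before the statement as possible. Two ingredients are already in place: the bracket \eqref{eq:wedge1} makes $L\wedge L$ a Lie algebra, and the map $\rho_2$ of \eqref{eq:trho} was shown above to be a well-defined Lie algebra homomorphism into $\mathrm{Der}(A)$, so that $(A,\rho_2)$ is a Lie algebra $(L\wedge L)$-module. Hence it remains to prove three things: (i) that \eqref{eq:actionad} endows $L\wedge L$ with an $A$-module structure, meaning both well-definedness on the quotient and associativity of the action; (ii) the $A$-linearity of the anchor, $\rho_2(b\cdot(x\wedge y))=b\,\rho_2(x\wedge y)$; and (iii) the Leibniz rule $[x\wedge y,\,b\cdot(x'\wedge y')]=b\,[x\wedge y,\,x'\wedge y']+\big(\rho_2(x\wedge y)b\big)\cdot(x'\wedge y')$.

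Condition (ii) is immediate and explains the symmetric shape of \eqref{eq:actionad}: applying $\rho_2$ to $\frac{1}{2}\big((bx')\wedge y'+x'\wedge(by')\big)$ and using \eqref{eq:Rinhart2} turns each of the two summands into $b\,\rho(x',y')$, whose average is again $b\,\rho(x',y')=b\,\rho_2(x'\wedge y')$. I would dispose of this first.

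For (iii) I would expand the left-hand side by applying the Lie bracket \eqref{eq:wedge1} to each of the two terms of $b\cdot(x'\wedge y')$, and then pull the scalar $b$ out of every inner bracket $[x,y,bx']$ and $[x,y,by']$ using \eqref{eq:Rinhart1}. This produces two families of terms. The terms carrying the factor $b$ reassemble, precisely because the $A$-action is the symmetrized expression \eqref{eq:actionad}, into $b\cdot\big([x,y,x']\wedge y'+x'\wedge[x,y,y']\big)=b\,[x\wedge y,x'\wedge y']$; the remaining terms, each carrying a factor $\rho(x,y)b$, collect into $\frac{1}{2}\big(((\rho(x,y)b)x')\wedge y'+x'\wedge((\rho(x,y)b)y')\big)$, which is exactly $(\rho_2(x\wedge y)b)\cdot(x'\wedge y')$ by the definition of the action with scalar $\rho(x,y)b$. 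Matching the two sides then yields the Leibniz identity; the key point is that the factor $\frac{1}{2}$ is exactly what forces the $b$-terms to recombine into $b\,[x\wedge y,x'\wedge y']$.

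The main obstacle is (i). The associativity axiom $(ab)\cdot(x\wedge y)=a\cdot\big(b\cdot(x\wedge y)\big)$ I would reduce, via $\mathrm{ad}$, to the identity $[bx,y,z]=b[x,y,z]+(\rho(y,z)b)x$ and its analogues; expanding both sides, the difference annihilates every $z\in L$ once one invokes commutativity of $A$ and the derivation property of each $\rho(y,z)$, with the cross terms $(\rho(y,z)a)b$ and $a(\rho(y,z)b)$ cancelling in pairs. The subtler half is well-definedness on the quotient: a simple wedge $x\wedge y\in J$ has $[x,y,L]=0$, yet neither $(bx)\wedge y$ nor $x\wedge(by)$ need lie in $J$, so I would have to show that the class $b\cdot(x\wedge y)$ is nonetheless unchanged, i.e. that the relevant combinations are annihilated by $\mathrm{ad}$ on all of $L$. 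This is the one place where \eqref{eq:Rinhart1}, \eqref{eq:Rinhart2} and the hypothesis $\rho(L\wedge L)\subseteq\mathrm{Der}(A)$ must be used simultaneously, and where I expect to spend the most care.
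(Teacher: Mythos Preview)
Your plan is correct and follows essentially the same route as the paper: verify the $A$-module structure, the $A$-linearity of $\rho_2$, and the Leibniz rule, with the Leibniz computation you outline matching the paper's line by line. The only organizational difference is that for step (i) the paper first isolates the auxiliary identity $(bx)\wedge(b'y)+(b'x)\wedge(by)=(b'bx)\wedge y+x\wedge(b'by)$ (proved by evaluating both sides on an arbitrary $z\in L$, exactly as you propose) and then reads off associativity in one line; this ``evaluate on $z$'' device simultaneously dispatches the well-definedness issue you flag, since equality in the quotient $L\wedge L$ is detected by the $\mathrm{ad}$-action on $L$.
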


\begin{proof}  Thanks to \eqref{eq:wedge}, $ L\wedge L$ is a Lie algebra. By Definition \ref{defin:rinehart} and \eqref{eq:wedge1}, for all $b, b'\in A$, $x, y\in L$,
since for all $z\in L,$
\begin{equation*}
\begin{split}
&\big((bx)\wedge (b'y)+(b'x)\wedge(by)\big)z\\
=&[bx,b'y,z]+[b'x,by,z]\\
=&b[x,b'y,z]+\rho(b'y,z)bx+b'[x,by,z]+\rho(by,z)b'x\\
=&b'b[x,y,z]+b\rho(z,x)b'y+b'\rho(y,z)bx+b'b[x,y,z]+b'\rho(z,x)by+b\rho(y,z)b'x\\
=&2b'b[x,y,z]+\rho(z,x)(b'b)y+\rho(y,z)(b'b)x\\
=&b'b[z, x,y]+\rho(z,x)(b'b)y+b'b[y, z, x]+\rho(y,z)(b'b)x\\
=&[b'bx,y,z]+[x,b'by,z]=\big((b'bx)\wedge y+x\wedge (b'by)\big))z,
\end{split}
\end{equation*}we have
\begin{equation}\label{eq:bb'xy}
(bx)\wedge (b'y)+(b'x)\wedge(by)=(b'bx)\wedge y+x\wedge (b'by).
\end{equation}

Thanks to Eqs \eqref{eq:actionad} and \eqref{eq:bb'xy}, for all  $b, b'\in A$,
\begin{equation*}
\begin{split}
b'\cdot(b\cdot x\wedge y)=&\frac{1}{2}b'\cdot \big((bx)\wedge y+x\wedge (by)\big)\\
=&\frac{1}{4}\big((b'bx)\wedge y+(bx)\wedge (b'y)+(b'x)\wedge(by)+x\wedge(b'by)\big)\\
=&\frac{1}{2}((b'bx)\wedge y+x\wedge(b'by))=(b'b)\cdot x\wedge y.
\end{split}
\end{equation*}
Then $L\wedge L$ is an $A$-module.

  By Eqs \eqref{eq:wedge1} and  \eqref{eq:trho}, for all $x_1, x_2, y_1, y_2\in L$, $b\in A$, we have
\begin{equation*}
\begin{split}
\rho_2([x_1\wedge y_1, x_2\wedge y_2])=&\rho_2([x_1,y_1,x_2]\wedge y_2+x_2\wedge [x_1,y_1,y_2])\\
=&\rho([x_1,y_1,x_2],y_2)+\rho(x_2,[x_1,y_1,y_2])\\
=&[\rho(x_1,y_1),\rho(x_2,y_2)]=[\rho_2(x_1\wedge y_1),\rho_2(x_2\wedge y_2)],
\end{split}
\end{equation*}
and $
\rho_2(b\cdot x\wedge y)=b\cdot \rho_2(x\wedge y).$ Therefore,
$(A, \rho_2)$ is a Lie algebra $L \wedge L$-module.

For all $x_1, x_2, y_1, y_2, z\in L$, $b\in A$, by Eqs \eqref{eq:Rinhart1}, \eqref{eq:wedge1} and \eqref{eq:actionad},
\begin{equation*}
\begin{split}
&[x_1\wedge y_1, b\cdot x_2\wedge y_2]\\
=&\frac{1}{2}\big([ x_1,  y_1, bx_2]\wedge y_2+bx_2\wedge [x_1,y_1, y_2]+[x_1,y_1, x_2]\wedge by_2+x_2\wedge [x_1,y_1, by_2]\big)\\
=&\frac{1}{2}\big((b[ x_1,  y_1, x_2]+\rho(x_1, y_1)b x_2)\wedge y_2+x_2\wedge (b [x_1,y_1, y_2]+\rho(x_1, y_1)b y_2)\\
&+bx_2\wedge [x_1,y_1, y_2]+[x_1,y_1, x_2]\wedge by_2\big)\\
=&b\cdot ([x_1,y_1,x_2]\wedge y_2+x_2\wedge[x_1,y_1,x_2])+\frac{1}{2}(\rho(x_1, y_1)b x_2\wedge y_2+x_2\wedge\rho(x_1, y_1)b y_2)\\
=&b\cdot [x_1\wedge y_1, x_2\wedge y_2]+\rho_2(x_1\wedge y_1)b \cdot x_2\wedge y_2.\\
\end{split}
\end{equation*}

Therefore,
  $(L\wedge L, \rho_2)$ is a Lie-Rinehart algebra. The proof is complete.
 \end{proof}

    Let $(L, A, \rho)$ be a 3-Lie-Rinehart algebra, $(R, A)$ be a 3-Lie $A$-algebra, and $(L\wedge L, \rho_2)$ be the Lie-Rinehart algebra in Theorem  \ref{thm:LL}.

     Denotes $W(L, R, A)$ be the vector space spanned by the subset of $Der(R)\otimes (L\wedge L)$
    \begin{equation}
    \label{eq:do}
    \begin{split}
    W(L, R, A):=&\{(\varphi, x\wedge y) ~|~\varphi\in Der(R), x,y\in L, ~\\
    &\mbox{satisfying}~ \forall b\in A, r\in R, \varphi(br)=b\varphi(r)+ \rho(x, y)br\}.\\
    \end{split}
     \end{equation}

  Then we have the following result.

 \begin{theorem}\label{thm:do}
  Let $(L,A,\rho)$ be a 3-Lie-Rinehart algebra and $(R, A)$ be a 3-Lie $A$-algebra.
   Then $(W(L, R, A), \rho_3)$
   is a Lie-Rinehart algebra, where for , $\forall (d, x\wedge y), (d_i, x_i\wedge y_i)\in  W(L, R, A),i=1, 2,$

   \begin{equation}\label{eq:do1}
  b(\varphi, x\wedge y)=(b\varphi, b\cdot x\wedge y), ~~ \forall  b\in A,
    \end{equation}
    \begin{equation}\label{eq:do2}
    \rho_3:W(L, R, A)\rightarrow Der(A),~~
    \rho_3(\varphi, x\wedge y)=\rho(x, y),
    \end{equation}
    \begin{equation}
    \label{eq:domul}
    [(\varphi_1,x_1\wedge y_1),~(\varphi_2,x_2\wedge y_2)]=([\varphi_1,\varphi_2],[x_1\wedge y_1,x_2\wedge y_2]).
   \end{equation}
   \end{theorem}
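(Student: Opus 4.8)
The plan is to realize $(W(L,R,A),\rho_3)$ as a Lie--Rinehart subalgebra sitting inside the direct product of the Lie algebra $Der(R)$ (under the commutator bracket) and the Lie algebra $L\wedge L$ of Theorem \ref{thm:LL}, the subset being cut out by the compatibility condition in \eqref{eq:do}. First I would record that $W(L,R,A)$ is an $F$-subspace: since $\rho_2(x\wedge y)=\rho(x,y)$ is $F$-linear in $x\wedge y$, the defining condition $\varphi(br)=b\varphi(r)+\rho(x,y)b\,r$ is linear in the pair $(\varphi,x\wedge y)$, so sums and scalar multiples of elements of $W(L,R,A)$ again satisfy it. The four things left to check are then: closure under the bracket \eqref{eq:domul} (so that $W(L,R,A)$ is a Lie algebra); closure under the $A$-action \eqref{eq:do1} together with the module axioms (so that $W(L,R,A)$ is an $A$-module); that $\rho_3$ is a Lie algebra homomorphism into $Der(A)$; and the two Lie--Rinehart identities from \eqref{eq:RinhartL1}.

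The main step is closure under the bracket. Given $u_i=(\varphi_i,x_i\wedge y_i)\in W(L,R,A)$ for $i=1,2$, write $\rho_i:=\rho(x_i,y_i)$. Using the defining relation of $\varphi_1$ and of $\varphi_2$ I would compute, for $b\in A$ and $r\in R$,
\begin{equation*}
\varphi_1\varphi_2(br)=b\,\varphi_1\varphi_2(r)+(\rho_1 b)\varphi_2(r)+(\rho_2 b)\varphi_1(r)+\rho_1\rho_2(b)\,r,
\end{equation*}
together with the same identity with the indices $1,2$ interchanged; subtracting, the two symmetric ``cross terms'' cancel and leave
\begin{equation*}
[\varphi_1,\varphi_2](br)=b\,[\varphi_1,\varphi_2](r)+[\rho_1,\rho_2](b)\,r.
\end{equation*}
By Theorem \ref{thm:LL} the map $\rho_2$ is a Lie algebra homomorphism, so $[\rho_1,\rho_2]=[\rho_2(x_1\wedge y_1),\rho_2(x_2\wedge y_2)]=\rho_2([x_1\wedge y_1,x_2\wedge y_2])$; this is exactly the twisted Leibniz rule required for $([\varphi_1,\varphi_2],[x_1\wedge y_1,x_2\wedge y_2])$ to lie in $W(L,R,A)$. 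Since the bracket \eqref{eq:domul} is the componentwise bracket of $Der(R)\times(L\wedge L)$, bilinearity, antisymmetry and the Jacobi identity are inherited, so $W(L,R,A)$ is a Lie algebra. This matching of the two components through the single anchor $\rho_2$ is the crux of the whole argument and the step I expect to be the main obstacle.

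Next I would treat the $A$-module structure, where commutativity of $A$ is used twice. Because $R$ is a $3$-Lie $A$-algebra ($\rho_R=0$ gives $[br,s,t]=b[r,s,t]$), the operator $b\varphi$ is again a derivation of $R$, and because $A$ is commutative $b\,\rho(x,y)\in Der(A)$ as well; a short computation then shows $(b\varphi)(cr)-c(b\varphi)(r)=(b\,\rho(x,y)c)\,r=\rho_2(b\cdot x\wedge y)(c)\,r$, using $\rho_2(b\cdot x\wedge y)=b\,\rho_2(x\wedge y)$ from Theorem \ref{thm:LL}, so that $(b\varphi,b\cdot x\wedge y)\in W(L,R,A)$. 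The module axiom $b'\cdot(b\cdot u)=(b'b)\cdot u$ for $u\in W(L,R,A)$ follows componentwise from the corresponding facts in $Der(R)$ and in $L\wedge L$.

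Finally, that $\rho_3$ is a Lie homomorphism is immediate, since $\rho_3$ is just $\rho_2$ read off the second component and $\rho_2$ is a homomorphism by Theorem \ref{thm:LL}; likewise $\rho_3(b\cdot u)=b\,\rho_3(u)$ follows from $\rho_2(b\cdot x\wedge y)=b\,\rho_2(x\wedge y)$. For the remaining Lie--Rinehart identity
\begin{equation*}
[u_1,\,b u_2]=b\,[u_1,u_2]+\rho_3(u_1)(b)\cdot u_2,
\end{equation*}
I would verify it one component at a time: on the $Der(R)$ factor one computes $[\varphi_1,b\varphi_2]=b[\varphi_1,\varphi_2]+(\rho_1 b)\varphi_2$ directly from the defining relation of $\varphi_1$, and on the $L\wedge L$ factor the identity $[x_1\wedge y_1,b\cdot x_2\wedge y_2]=b\cdot[x_1\wedge y_1,x_2\wedge y_2]+(\rho_1 b)\cdot x_2\wedge y_2$ is precisely the Lie--Rinehart relation already established for $(L\wedge L,\rho_2)$ in Theorem \ref{thm:LL}. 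Reassembling the two components yields the displayed identity and completes the verification.
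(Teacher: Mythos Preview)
Your argument is correct and follows the same route as the paper's own proof: realize $W(L,R,A)$ as a subspace of $Der(R)\times(L\wedge L)$ with the componentwise bracket, check it is closed under both the bracket and the $A$-action, and then verify $\rho_3$ is a Lie homomorphism and the Leibniz identity \eqref{eq:RinhartL1} componentwise, reducing the $L\wedge L$ component to Theorem~\ref{thm:LL}. Your write-up is in fact more explicit than the paper's, which dismisses the closure under bracket with ``a direct computation'' and the $A$-module structure with a reference to \eqref{eq:actionad}, \eqref{eq:do}, \eqref{eq:do1}; the key computation $[\varphi_1,\varphi_2](br)=b[\varphi_1,\varphi_2](r)+[\rho_1,\rho_2](b)\,r$ that you isolate is exactly what is needed and is left implicit in the paper.
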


\begin{proof}   By Eqs \eqref{eq:actionad},  \eqref{eq:do} and \eqref{eq:do1}, $W(L, R, A)$ is an $A$-module .

Thanks to  Theorem  \ref{thm:LL} and a direct computation,  $W(L, R, A)$ is a Lie algebra with the multiplication \eqref{eq:domul}.
By Eq \eqref{eq:do2}, for all $(\varphi_1,x_1\wedge y_1),(\varphi_2,x_2\wedge y_2)\in W(L,R,A)$,
 \begin{equation*}
\begin{split}
&\rho_3[(\varphi_1,x_1\wedge y_1),(\varphi_2,x_2\wedge y_2)]
=\rho_3([\varphi_1,\varphi_2],[x_1\wedge y_1,x_2\wedge y_2])=\rho_2([x_1\wedge y_1,x_2\wedge y_2])\\
=&[\rho_2(x_1\wedge y_1),\rho_2(x_2\wedge y_2)]=[\rho_3(\varphi_1,x_1\wedge y_1),\rho_3(\varphi_2,x_2\wedge y_2)],\\
\end{split}
\end{equation*}
it follows that $A$ is a Lie algebra $W(L, R, A)$-module and $\rho_3(W(L, R, A))\subseteq Der(A)$.

  For any $b\in A$, $(\varphi_1,x_1\wedge y_1),(\varphi_2,x_2\wedge y_2)\in W(L,R,A)$, by Theorem \ref{thm:LL} and \eqref{eq:domul},
\begin{equation*}
\begin{split}
&[(\varphi_1,x_1\wedge y_1),b(\varphi_2,x_2\wedge y_2)]\\
=&[(\varphi_1,x_1\wedge y_1),(b\varphi_2, b\cdot x_2\wedge y_2)]\\
=&([\varphi_1,b\varphi_2],[x_1\wedge y_1,b\cdot x_2\wedge y_2])\\
=&(\varphi_1(b\varphi_2)-(b\varphi_2)\varphi_1,~b\cdot [x_1\wedge y_1, x_2\wedge y_2]+\rho_2(x_1\wedge y_1)b \cdot (x_2\wedge y_2))\\
=&(b(\varphi_1\varphi_2)-(b\varphi_2)\varphi_1+\rho_2(x_1\wedge y_1)b \varphi_2,~b\cdot [x_1\wedge y_1,x_2\wedge y_2]+\rho_2(x_1\wedge y_1)b\cdot x_2\wedge y_2)\\
=&b([\varphi_1,\varphi_2],[x_1\wedge y_1,x_2\wedge y_2])+\rho_2(x_1\wedge y_1)b (\varphi_2,x_2\wedge y_2)\\
=&b[(\varphi_1,x_1\wedge y_1),(\varphi_2,x_2\wedge y_2)]+\rho_3(\varphi_1,x_1\wedge y_1)b (\varphi_2,x_2\wedge y_2).\\
\end{split}
\end{equation*}

  Therefore, $(W(L, R, A), \rho_3)$ is a Lie-Rinehart algebra.
 \end{proof}

\begin{coro}
  Let $(L,A,\rho)$  be a 3-Lie-Rinehart algebra, and $(R, A)$ be a 3-Lie A-algebra,
  then
  $ W(L, R\wedge R, A)  $
is a subalgebra of the Lie-Rinehart algebra
 $W(L, R, A)  $, where

 \begin{equation*}
\begin{split} W(L, R\wedge R, A)=&\{(r_1\wedge r_2, x_1\wedge x_2) ~|~  x_1, x_2\in L, r_1, r_2\in R,\\
&\mbox{ satisfying}~~~
(r_1\wedge r_2)(br)=b [r_1, r_2, r]+\rho(x_1, x_2)b r, \forall r\in R, b\in A\}.
 \end{split}
\end{equation*}
\end{coro}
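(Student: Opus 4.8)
The plan is to realize $W(L,R\wedge R,A)$ as the subset of the Lie-Rinehart algebra $(W(L,R,A),\rho_3)$ consisting of those pairs whose first component is an \emph{inner} derivation of $R$, and then to show that this subset is stable under the bracket \eqref{eq:domul} and the $A$-action \eqref{eq:do1}. Concretely, for a generator $(r_1\wedge r_2,x_1\wedge x_2)$ I read $r_1\wedge r_2$ as the inner derivation $[r_1,r_2,-]$ of the 3-Lie algebra $R$; by the fundamental identity \eqref{eq:jacobi} this lies in $Der(R)$, and the relation imposed in the definition of $W(L,R\wedge R,A)$ is exactly the compatibility condition \eqref{eq:do} for the pair $([r_1,r_2,-],x_1\wedge x_2)$. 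Hence every generator, and so the whole span, already lies in $W(L,R,A)$, and $\rho_3$ restricts to $(r_1\wedge r_2,x_1\wedge x_2)\mapsto\rho(x_1,x_2)$. It therefore remains only to verify closure under the two operations.

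The two structural facts I would isolate concern the inner derivations of $R$. First, since $(R,A)$ is a 3-Lie $A$-algebra its anchor vanishes, so \eqref{eq:Rinhart1} reduces to $[r,r',bt]=b[r,r',t]$; together with skew-symmetry this makes the bracket of $R$ $A$-multilinear, whence $b\cdot[r_1,r_2,-]=[br_1,r_2,-]$ is again inner. Thus the space of inner derivations of $R$ is stable under the $A$-action of \eqref{eq:do1}. Second, the commutator identity for inner derivations recalled at the end of the introduction, applied to $R$, gives
\[
[\,[r_1,r_2,-],\,[r_3,r_4,-]\,]=[[r_1,r_2,r_3],r_4,-]+[r_3,[r_1,r_2,r_4],-],
\]
so the inner derivations of $R$ form a Lie subalgebra of $Der(R)$.

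With these in hand, closure is immediate. For two generators, \eqref{eq:domul} produces a pair whose first slot is $[\,[r_1,r_2,-],[r_3,r_4,-]\,]$, inner by the second fact, and whose second slot is $[x_1\wedge x_2,x_3\wedge x_4]\in L\wedge L$ by \eqref{eq:wedge1}; likewise the $A$-action \eqref{eq:do1} sends a generator to a pair with inner first slot by the first fact and second slot $b\cdot x_1\wedge x_2\in L\wedge L$ by \eqref{eq:actionad}. In both cases the compatibility condition \eqref{eq:do} is inherited automatically, because $W(L,R,A)$ is itself a Lie-Rinehart algebra and both inputs already lie in it. Hence the bracket and the $A$-multiple again lie in $W(L,R\wedge R,A)$, which is therefore a Lie-Rinehart subalgebra.

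The main obstacle is the second structural fact, namely checking that the inner derivations of $R$ are closed under \eqref{eq:domul} and, in particular, that this closure is consistent with the description of the bracket on $L\wedge L$ from \eqref{eq:wedge1}: one must match the term $[[r_1,r_2,r_3],r_4,-]+[r_3,[r_1,r_2,r_4],-]$ against $[x_1,x_2,x_3]\wedge x_4+x_3\wedge[x_1,x_2,x_4]$ and confirm that the resulting anchor $\rho([x_1,x_2,x_3],x_4)+\rho(x_3,[x_1,x_2,x_4])$ agrees with $[\rho(x_1,x_2),\rho(x_3,x_4)]$ via \eqref{eq:mod1}. This is the one place where the 3-Lie module axioms \eqref{eq:mod1} and \eqref{eq:mod2} genuinely enter; the remaining verifications are the routine $A$-module bookkeeping already carried out for $W(L,R,A)$ in Theorem \ref{thm:do}.
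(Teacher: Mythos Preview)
Your proof is correct and spells out exactly what the paper's one-line argument (``Apply Theorem~\ref{thm:do} and Eq~\eqref{eq:do}'') leaves implicit: inner derivations $[r_1,r_2,-]$ of the 3-Lie $A$-algebra $R$ form an $A$-stable Lie subalgebra of $Der(R)$, so the pairs with inner first slot constitute a sub-Lie-Rinehart algebra of $W(L,R,A)$. Your final paragraph overstates the residual difficulty, though---there is no need to ``match'' the $R\wedge R$ bracket against the $L\wedge L$ bracket or to re-verify the anchor identity, since once both inputs are known to lie in $W(L,R,A)$ the compatibility with $\rho_3$ is already guaranteed by Theorem~\ref{thm:do}.
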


\begin{proof} Apply Theorem \ref{thm:do} and Eq \eqref{eq:do}.
 \end{proof}

\section{actions and modules of 3-Lie-Rinehart algebras}

  In this section, we discuss actions and modules of 3-Lie-Rinehart algeras.

\begin{defn}\label{defin:action}
  Let $(L, A, \rho)$ be a 3-Lie-Rinehart algebra, and $(R, A)$ be a 3-Lie $A$-algebra and $\beta:L\wedge L\rightarrow Der(R) $ be a $F$-linear mapping. We say that {\bf  $(L, A, \rho)$ acts on  $(R, A)$ by $\beta$} (or $\beta$ {\bf  is an action of $(L, A, \rho)$ on $(R, A)$} ), if following properties hold:

$(1)$ $(R,\beta)$ is a 3-Lie algebra L-module,

$(2)$ $ \beta(ax, y )=\beta(x, ay)=a \beta(x,y)$, $\forall a\in A$, $x, y\in L,$

$(3)$ $ \beta(x,y)(ar)=a\beta(x,y)r+\rho(x,y)(a)r$, $\forall a\in A, r\in R, x, y\in L.$

Furthermore, if $R$ is abelian, that is, $[R, R, R]=0$,  then $(R,\beta)$ is called {\bf a 3-Lie-Rinehart algebra L-module} (or, $(R,\beta)$ is an $(L, A, \rho)$-module).
\end{defn}

\begin{defn}\label{defin:modhom}
  Let $(L, A, \rho)$ be a 3-Lie-Rinehart algebra, $(R_1, A)$ and $(R_2, A)$ be abelian 3-Lie $A$-algebras, and
   $(R_1, \beta_1), (R_2, \beta_2)$ be  $(L, A, \rho)$-modules. For an $A$-module homomorphism $f: R_1\rightarrow R_2$, if $f$ satisfies
\begin{equation*}
  f(\beta_1(x,y)r)=\beta_2(x,y)f(r), ~~  \forall x, y\in L, r\in R_1,
\end{equation*}
then $f$ is called {\bf  a 3-Lie-Rinehart module homomorphism} from $(R_1, \beta_1)$ to $(R_2, \beta_2)$, and if $f$ is an isomorphism, then we say that $(R_1, \beta_1)$ is equivalent to $ (R_2, \beta_2)$.
\end{defn}

\begin{theorem}\label{thm:induced}
  Let $(L, A, \rho)$ and  $(L', A, \rho')$ be 3-Lie-Rinehart algebras, $(R, A)$ be an abelian  3-Lie $A$-algebra and $(R,\beta)$ be an $(L, A, \rho)$-module. If $f: L'\rightarrow L$ is a 3-Lie-Rinehart  homomorphism,  then $(R, \beta')$ is an $(L', A, \rho')$-module, where $\beta': L'\wedge L'\rightarrow gl(R)$ is defined by
 \begin{equation}\label{eq:inducedbeta}
  \beta'(x',y')=\beta(f(x'),f(y')), ~~ \forall x',y'\in L',
 \end{equation}
  which is called  {\bf  an $(L', A, \rho')$-module induced by $f$}.
\end{theorem}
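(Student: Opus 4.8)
The plan is to verify directly that $(R,\beta')$ satisfies the three conditions of Definition \ref{defin:action} together with the abelian condition, pushing each requirement back through the homomorphism $f$ to the corresponding property of the given $(L,A,\rho)$-module $(R,\beta)$. Since $R$ is already abelian as a $3$-Lie $A$-algebra, the abelian condition $[R,R,R]=0$ is inherited for free, so the real content is checking that $\beta'$ defined by \eqref{eq:inducedbeta} is again an action.

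First I would verify condition $(1)$, that $(R,\beta')$ is a $3$-Lie algebra $L'$-module, i.e.\ that $\beta'$ satisfies Eqs \eqref{eq:mod1} and \eqref{eq:mod2}. Here the key input is that $f$ is a $3$-Lie algebra homomorphism, so $f([x'_1,x'_2,x'_3])=[f(x'_1),f(x'_2),f(x'_3)]$. Substituting $\beta'(x',y')=\beta(f(x'),f(y'))$ into the left-hand side of \eqref{eq:mod1}, I would rewrite each bracket of images as the image of a bracket and then invoke the fact that $\beta$ already satisfies \eqref{eq:mod1} and \eqref{eq:mod2} as an $L$-module structure; the identities then transfer verbatim. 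This is essentially the standard observation that a module structure pulls back along an algebra homomorphism.

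Next I would check conditions $(2)$ and $(3)$, the $A$-linearity and the Rinehart-type compatibility. For $(2)$, I would compute $\beta'(ax',y')=\beta(f(ax'),f(y'))=\beta(af(x'),f(y'))$ using $f(ax')=af(x')$ (part of the definition of a $3$-Lie-Rinehart homomorphism), and then apply property $(2)$ for $\beta$ to pull out the scalar $a$; the symmetric statement for $\beta'(x',ay')$ is identical. For $(3)$, I would expand $\beta'(x',y')(ar)=\beta(f(x'),f(y'))(ar)$, apply property $(3)$ for $\beta$ to get $a\beta(f(x'),f(y'))r+\rho(f(x'),f(y'))(a)r$, and then use the crucial relation $\rho'(x',y')=\rho(f(x'),f(y'))$ from Definition \ref{defin:homomorph} to rewrite the second term as $\rho'(x',y')(a)r$, yielding exactly property $(3)$ for $\beta'$.

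The main obstacle, such as it is, lies entirely in condition $(3)$: it is the only place where one must simultaneously use the $A$-module compatibility $f(ax')=af(x')$, the intertwining of the anchors $\rho'=\rho\circ(f\wedge f)$, and the Rinehart property of the original action $\beta$. Each of the other verifications uses only one of these facts, so if any subtlety arises it will be in confirming that the two notions of anchor match up correctly under $f$. I do not expect genuine difficulty, but I would present condition $(3)$ with full detail and treat conditions $(1)$ and $(2)$ more briefly, since they are formal consequences of $f$ being a $3$-Lie algebra homomorphism and an $A$-module map respectively.
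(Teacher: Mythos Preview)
Your proposal is correct and follows essentially the same approach as the paper: both verify the three conditions of Definition~\ref{defin:action} directly by pushing each through the homomorphism $f$, using that $f$ preserves brackets for condition~(1), that $f(ax')=af(x')$ for condition~(2), and the anchor compatibility $\rho'(x',y')=\rho(f(x'),f(y'))$ for condition~(3). The paper's proof is slightly terser on condition~(3) but the logic is identical.
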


\begin{proof}
Since  $(R,\beta)$ is  an $(L, A, \rho)$-module, by Eqs \eqref{eq:mod1} and  \eqref{eq:mod2}, $\forall x'_i\in L', 1\leq i\leq 4,$
\begin{equation*}
\begin{split}
&[\beta'(x_1',x_2'),\beta'(x_3',x_4')]\\
=&[\beta(f(x_1'),f(x_2')),\beta(f(x_3'),f(x_4'))]\\
=&\beta([f(x_1'),f(x_2'),f(x_3')],f(x_4))+\beta(f(x_3'),[f(x_1'),f(x_2'),f(x_4')])\\
=&\beta(f([x_1',x_2',x_3']),f(x_4))+\beta(f(x_3'),f([x_1',x_2',x_4']))\\
=&\beta'([x_1',x_2',x_3'],x_4')+\beta'(x_3',[x_1',x_2',x_4']).\\
\end{split}
\end{equation*}

  \begin{equation*}
\begin{split}
\beta'([x_1',x_2',x_3'],x_4')=&\beta'(x_1',x_2')\beta'(x_3',x_4')+\beta'(x_2',x_3')\beta'(x_1',x_4')\\
  &+\beta'(x_3',x_1')\beta'(x_2',x_4').
\end{split}
\end{equation*}

Therefore, $(R,\beta')$ is  a 3-Lie algebra $L'$-module.

 Since $f: L'\rightarrow L$ is a 3-Lie-Rinehart homomorphism, $\forall x',y'\in L', a\in A$,
  \begin{equation*}
  \beta'(ax',y')=\beta(f(ax'),f(y'))=\beta(af(x'),f(y'))=a\beta'(x',y'),
  \end{equation*}
  \begin{equation*}
  \beta'(x',ay')=\beta'(ax',y')=a\beta'(x',y'),
  \end{equation*}
\begin{equation*}
\beta'(x',y')(ar)=a\beta'(x',y')r+\rho'(x',y')(a)r.
\end{equation*}

  Therefore, $(R, \beta')$ is an $(L', A, \rho')$-module.
\end{proof}

\begin{theorem}\label{thm:module}
  Let $(L, A, \rho)$ be a 3-Lie-Rinehart algebra, $(R, A)$ be an abelian  3-Lie $A$-algebra, and $(R,\beta)$ be a 3-Lie algebra $L$-module.
  Then $(R,\beta)$ is an  $(L, A, \rho)$-module if and only if $(L\ltimes R, A, \rho_4)$ is a 3-Lie-Rinehart algebra with the multiplication
  $ ~~ \forall x_1, x_2, x_3\in L, r_1, r_2, r_3\in R,$
   \begin{equation}
\label{eq:semiproductR} [x_1+r_1, x_2+r_2, x_3+r_3]=[x_1, x_2, x_3]+\beta(x_1, x_2)r_3+\beta(x_2, x_3)r_1+\beta(x_3, x_1)r_2,
\end{equation} and
  \begin{equation}\label{eq:oplus1}
  \rho_4: (L\ltimes R)\wedge(L\ltimes R) \rightarrow Der(A),~~\rho_4(x_1+r_1,x_2+r_2)=\rho(x_1,x_2).
  \end{equation}
\end{theorem}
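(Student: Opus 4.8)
The plan is to observe that, under the standing hypotheses, almost every clause in the definition of a $3$-Lie-Rinehart algebra is automatic for $(L\ltimes R, A, \rho_4)$, so that the whole equivalence is carried by the single axiom \eqref{eq:Rinhart1}, whose $R$-component I will show is precisely the conjunction of conditions $(2)$ and $(3)$ of Definition \ref{defin:action}.

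First I would dispose of the free structure. Since $R$ is abelian, the bracket \eqref{eq:semiproductR} is exactly the semidirect-product bracket \eqref{eq:semiproduct} with $V=R$ and structure map $\beta$; hence $L\ltimes R$ is a $3$-Lie algebra precisely because $(R,\beta)$ is a $3$-Lie algebra $L$-module, which is a standing hypothesis. The direct sum $L\oplus R$ is an $A$-module via $a(x+r)=ax+ar$. Because $\rho_4(x_1+r_1,x_2+r_2)=\rho(x_1,x_2)$ depends only on the $L$-components, while the $L$-component of the $L\ltimes R$-bracket is the $L$-bracket $[x_1,x_2,x_3]$, the module identities \eqref{eq:mod1} and \eqref{eq:mod2} for $\rho_4$, the inclusion $\rho_4((L\ltimes R)\wedge(L\ltimes R))\subseteq Der(A)$, and the scaling law \eqref{eq:Rinhart2} for $\rho_4$ all reduce at once to the corresponding statements for $(L,A,\rho)$, none of which mention $\beta$. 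Thus these hold unconditionally and need not be revisited in either direction.

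The crux is \eqref{eq:Rinhart1} for $L\ltimes R$. I would expand $[\,x_1+r_1,\,x_2+r_2,\,a(x_3+r_3)\,]$ by \eqref{eq:semiproductR}, using $a(x_3+r_3)=ax_3+ar_3$, and compare it with $a[x_1+r_1,x_2+r_2,x_3+r_3]+\rho_4(x_1+r_1,x_2+r_2)(a)\,(x_3+r_3)$. The $L$-components agree automatically by \eqref{eq:Rinhart1} for $(L,A,\rho)$, so the whole axiom collapses to the equality of $R$-components
\begin{equation*}
\begin{split}
&\beta(x_1,x_2)(ar_3)+\beta(x_2,ax_3)r_1+\beta(ax_3,x_1)r_2\\
&\qquad=a\beta(x_1,x_2)r_3+a\beta(x_2,x_3)r_1+a\beta(x_3,x_1)r_2+\rho(x_1,x_2)(a)r_3.
\end{split}
\end{equation*}
For the forward implication, assuming $(R,\beta)$ is an $(L,A,\rho)$-module, I would insert condition $(3)$ into the first summand and condition $(2)$ into the remaining two, after which this identity holds term by term, so $L\ltimes R$ is $3$-Lie-Rinehart. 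For the converse, knowing the identity holds for all $x_i\in L$, $r_i\in R$ and $a\in A$, I would specialize: setting $r_1=r_2=0$ isolates condition $(3)$; setting $r_1=r_3=0$ and letting $r_2$ range over $R$ gives $\beta(ax_3,x_1)=a\beta(x_3,x_1)$, while setting $r_2=r_3=0$ and letting $r_1$ range over $R$ gives $\beta(x_2,ax_3)=a\beta(x_2,x_3)$; upon relabeling these are the two halves of condition $(2)$. Combined with the hypotheses that $(R,\beta)$ is a $3$-Lie $L$-module and that $R$ is abelian, this yields that $(R,\beta)$ is an $(L,A,\rho)$-module.

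The step I expect to be the main obstacle is conceptual rather than computational: recognizing that \eqref{eq:Rinhart1} for the semidirect product bundles conditions $(2)$ and $(3)$ into one vector identity, and choosing the specializations that disentangle them exactly, with nothing left over. The accompanying bookkeeping hazard is to confirm that the auxiliary axioms \eqref{eq:mod1}, \eqref{eq:mod2} and \eqref{eq:Rinhart2} depend only on the $L$-data and never secretly invoke $(2)$ or $(3)$; this is what makes the equivalence an honest ``if and only if'' rather than a one-sided implication.
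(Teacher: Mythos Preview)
Your proposal is correct and follows essentially the same approach as the paper: verify the $A$-module structure on $L\ltimes R$, observe that the axioms for $\rho_4$ reduce to those for $\rho$, and then show that \eqref{eq:Rinhart1} for the semidirect product encodes exactly conditions $(2)$ and $(3)$ of Definition~\ref{defin:action}. Your converse is in fact more careful than the paper's, which explicitly extracts only condition $(3)$ from the identity $[x_1,x_2,ar]=a[x_1,x_2,r]+\rho_4(x_1,x_2)(a)r$ and then simply asserts that $\beta$ satisfies Definition~\ref{defin:action}; your specializations $r_1=r_3=0$ and $r_2=r_3=0$ make the recovery of condition $(2)$ explicit.
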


\begin{proof} Since  $L$ and $R$ are $A$-modules,
  $L\ltimes R=L\dot+ R$ is  an $A$-module and satisfies
   $$a(x+r)=ax+ar, ~~~ \forall a\in A, x\in L, r\in R.$$

  If $(R,\beta)$ is an  $(L, A, \rho)$-module. Then by  Eqs \eqref{eq:semiproductR} and \eqref{eq:oplus1}, and Definition \ref{defin:action},
 $L\ltimes R$ is a 3-Lie algebra with the multiplication \eqref{eq:semiproductR},  and $(A, \rho_4)$ is a 3-Lie algebra $L\ltimes R$-module. And for
 $\forall x_i\in L, r_i\in R$ and $a\in A,$ $i=1,2,3, $
  \begin{equation*}
\begin{split}
&[x_1+r_1, x_2+r_2, a(x_3+r_3)]=[x_1+r_1, x_2+r_2, ax_3]+[x_1+r_1, x_2+r_2, ar_3]\\
=&[x_1, x_2, ax_3]+\beta(x_1,x_2)(ar_3)+\beta(ax_3, x_1)r_2+\beta(x_2,ax_3)r_1\\
=&a([x_1,x_2,x_3]+\beta(x_1,x_2)r_3+\beta(x_2,x_3)r_1+\beta(x_3,x_1)r_2)+\rho(x_1,x_2)ax_3+\rho(x_1,x_2)a r_3\\
=&a[x_1+r_1,x_2+r_2,x_3+r_3]+\rho_4(x_1+r_1,x_2+r_2)a(x_3+r_3).\\
\end{split}
 \end{equation*}
  Therefore, $(L\ltimes R, A, \rho_4)$ is a 3-Lie-Rinehart algebra.

  Conversely, if $(L\ltimes R, A, \rho_4)$ is a 3-Lie-Rinehart algebra. Then by \eqref{eq:semiproductR}, for all $a\in A, x_1, x_2\in L, r,r_1, r_2, r_3\in R$,
  \begin{equation*}
\begin{split}
\beta(x_1, x_2)[r_1, r_2, r_3]=&[x_1, x_2, [r_1, r_2, r_3]]\\
=&[[x_1, x_2, r_1], r_2, r_3]+[r_1, [x_1, x_2, r_2], r_3]+[r_1, r_2, [x_1, x_2, r_3]]\\
 =&[\beta(x_1, x_2)r_1, r_2, r_3]+[r_1, \beta(x_1, x_2)r_2, r_3]+[r_1, r_2, \beta(x_1, x_2)r_3],
 \end{split}
\end{equation*}
$$\beta(x_1, x_2)(ar)=[x_1, x_2, ar]=a[x_1, x_2, r]+\rho_4(x_1, x_2)a~~r=a\beta(x_1, x_2)r+\rho(x_1, x_2)a~r,$$
  we get $\beta(L\wedge L)\subseteq Der(R)$, and $\beta$
satisfies Definition \ref{defin:action}. Therefore,  $(R,\beta)$ is a 3-Lie-Rinehart $(L, A, \rho)$-module.
\end{proof}

\begin{coro}\label{coro:pai}
  Let $(L, A, \rho)$ be a 3-Lie-Rinehart algebra, and $(R,\beta)$ be an $(L, A, \rho)$-module. Then $F$-linear mappings
 $ ~~ \pi: L\ltimes R\rightarrow L$ and $  i: R\rightarrow L\ltimes R ~~ ~$ defined by
 \begin{equation}\label{eq:pii}
  ~~\pi(x+r)=x, ~~~~~
 i(r)=r,~~ \forall  x\in L,~r\in R,
   \end{equation}
are $3$-Lie-Rinehart homomorphisms.
\end{coro}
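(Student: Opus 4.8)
The plan is to reduce everything to the definition of a 3-Lie-Rinehart homomorphism (Definition \ref{defin:homomorph}) and to the explicit structure of the semidirect product supplied by Theorem \ref{thm:module}. By that theorem, since $(R,\beta)$ is an $(L,A,\rho)$-module, the triple $(L\ltimes R, A, \rho_4)$ is a genuine 3-Lie-Rinehart algebra; moreover $(R,A)$ is a 3-Lie $A$-algebra, i.e.\ a 3-Lie-Rinehart algebra with zero anchor. Thus both $\pi$ and $i$ are maps between 3-Lie-Rinehart algebras over the same $A$, and it remains only to check, for each of them, the three requirements of Definition \ref{defin:homomorph}: compatibility with the 3-bracket, $A$-linearity, and compatibility of the anchors.

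First I would treat $\pi\colon (L\ltimes R,A,\rho_4)\to (L,A,\rho)$. Applying $\pi$ to the bracket \eqref{eq:semiproductR} kills the three $\beta$-terms (which lie in $R$) and retains only $[x_1,x_2,x_3]\in L$, so $\pi([u_1,u_2,u_3])=[\pi u_1,\pi u_2,\pi u_3]$ and $\pi$ is a 3-Lie algebra homomorphism. Since $a(x+r)=ax+ar$, we get $\pi(a(x+r))=ax=a\,\pi(x+r)$, giving $A$-linearity. Finally, writing $u_j=x_j+r_j$, the anchor condition reads $\rho(\pi u_1,\pi u_2)=\rho(x_1,x_2)$, which matches $\rho_4(u_1,u_2)$ exactly by the definition \eqref{eq:oplus1}. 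Hence $\pi$ is a 3-Lie-Rinehart homomorphism.

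Next I would treat $i\colon (R,A)\to (L\ltimes R,A,\rho_4)$, where the source anchor is $0$. The one point that actually uses a hypothesis is the bracket compatibility: because $(R,\beta)$ is an $(L,A,\rho)$-\emph{module}, $R$ is abelian, so $[r_1,r_2,r_3]_R=0$; on the other side, \eqref{eq:semiproductR} with all $L$-components zero gives $[i(r_1),i(r_2),i(r_3)]=\beta(0,0)r_3+\beta(0,0)r_1+\beta(0,0)r_2=0$ as well, so $i$ preserves the bracket. The $A$-linearity $i(ar)=ar=a\,i(r)$ is immediate, and the anchor condition $\rho_4(i(r_1),i(r_2))=\rho_4(0+r_1,0+r_2)=\rho(0,0)=0$ matches the zero anchor of $(R,A)$. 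Thus $i$ is a 3-Lie-Rinehart homomorphism as well.

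There is no genuine obstacle here; the computation is entirely routine. The only subtlety worth flagging is that the bracket-compatibility of $i$ rests essentially on the abelianness of $R$, which makes both the source bracket and its image vanish simultaneously—without it, $i$ would fail to be a homomorphism in general.
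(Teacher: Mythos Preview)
Your proof is correct and follows essentially the same approach as the paper: verify directly, via the explicit formulas \eqref{eq:semiproductR} and \eqref{eq:oplus1}, that $\pi$ and $i$ are $A$-linear 3-Lie algebra homomorphisms intertwining the anchors. The paper carries out the computation for $\pi$ and then dismisses $i$ with ``by the similar discussion''; your treatment of $i$ is in fact more explicit, and your observation that the abelianness of $R$ is what makes $i$ bracket-preserving is a useful clarification.
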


\begin{proof} Thanks to Theorem \ref{thm:module},
  $\pi$ and $i$ are $A$-module homomorphisms.

  By Eq \eqref{eq:semiproduct}, for all $x_i\in L$, $r_i\in R$,
 \begin{equation*}
\begin{split}
\pi[x_1+r_1,x_2+r_2,x_3+r_3]=&\pi ([x_1,x_2,x_3]+\beta(x_1,x_2)r_3+\beta(x_2,x_3)r_1+\beta(x_3,x_1)r_2)\\
=&[x_1,x_2,x_3]=[\pi(x_1+r_1),\pi(x_2+r_2),\pi(x_3+r_3)].\\
\end{split}
\end{equation*}
  Then $\pi$ is a 3-Lie algebra homomorphism.

Thanks to  Eq \eqref{eq:oplus1},
 \begin{equation*}
 \rho_4(x_1+r_1 ,x_2+r_2)=\rho(x_1,x_2)=\rho(\pi(x_1+r_1),\pi(x_2+r_2)).
\end{equation*}
   Therefore, $\pi$ is a 3-Lie-Rinehart homomorphisms.

   By the similar discussion to the above, $i$ is a 3-Lie-Rinehart homomorphism.
\end{proof}

{\bf Remark }  Let $(L, A, \rho)$ be a 3-Lie-Rinehart algebra. Then  the regular representation $(R, \mbox{ad})$ of $L$ may not a  $(L, A, \rho)$-module, where $R$ is the copy of $L$ as an abelian 3-Lie A-algebra.
But if $ \rho=0$, then it is easy to check that $(R, \mbox{ad})$ is a  $(L, A, 0)$-module.

In fact, for  $\forall x, y\in L, z\in R=L$ and $b\in A$,

$\mbox{ad}(bx,y)z=[b x,y, z]=b[x,y,z]+\rho(y,z)b x=b ~~\mbox{ad}(x,y)+\rho(y,z)b x\neq b ~~\mbox{ad}(x,y)z$,
\\therefore,  $(R, \mbox{ad})$ satisfies Definition \ref{defin:action} if and only if $\rho=0$.

\begin{theorem}\label{thm:action1}
  Let $(L, A, \rho)$ be a 3-Lie-Rinehart algebra, $(R, A)$ be a 3-Lie $A$-algebra, and $f: L\rightarrow R$ be a 3-Lie $A$-algebra homomorphism, that is, $f$ is a 3-Lie algebra homomorphism and satisfies
  $f(ax)=af(x)$  for all $a\in A, x\in L$. Then $f$ induces an action $\beta$ of $(L, A, \rho)$ on $(R, A)$, where   $\beta: L\wedge L\rightarrow gl(R),$
  \begin{equation}\label{eq:action1}
    \beta(x,y)r=[f(x),f(y),r],~~ \forall x,y\in L, r\in R.
  \end{equation}
\end{theorem}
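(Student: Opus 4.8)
The plan is to take $\beta(x,y) = \mathrm{ad}_R(f(x),f(y))$, that is $\beta(x,y)r = [f(x),f(y),r]$, and to check the three conditions of Definition~\ref{defin:action} one at a time, viewing $\beta$ as the pullback along $f$ of the regular representation $(R,\mathrm{ad})$ of the $3$-Lie algebra $R$. Because $(R,\mathrm{ad})$ is the regular representation, each operator $\mathrm{ad}_R(f(x),f(y))$ already lies in $\mathrm{Der}(R)$ by Eq.~\eqref{eq:jacobi}, so $\beta$ indeed maps into $\mathrm{Der}(R)$; it is also manifestly skew-symmetric in its two arguments.

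First I would settle condition~$(1)$, namely Eqs.~\eqref{eq:mod1} and \eqref{eq:mod2} for $\beta$. The inputs are the regular-representation identities for $R$ (the ones recorded just after Eq.~\eqref{eq:ad}, together with Eq.~\eqref{eq:mod2} applied to $\mathrm{ad}_R$) and the fact that $f$ is a $3$-Lie algebra homomorphism, so $[f(x_1),f(x_2),f(x_3)] = f([x_1,x_2,x_3])$. Substituting $f(x_i)$ into those identities and then replacing brackets of images by images of brackets converts them verbatim into Eqs.~\eqref{eq:mod1}--\eqref{eq:mod2} for $\beta$, the skew-symmetry absorbing the sign in \eqref{eq:mod1}.

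Next I would verify condition~$(2)$, the $A$-bilinearity $\beta(ax,y) = \beta(x,ay) = a\beta(x,y)$. Here the point is to move the scalar through $f$ by $f(ax)=af(x)$, and then to use that $(R,A)$ is a $3$-Lie $A$-algebra, so by Eq.~\eqref{eq:Rinhart1} with $\rho_R=0$ its bracket is $A$-linear in every slot: $\beta(ax,y)r = [af(x),f(y),r] = a[f(x),f(y),r] = a\beta(x,y)r$, and symmetrically for the other slot.

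The main obstacle will be condition~$(3)$, $\beta(x,y)(ar) = a\beta(x,y)r + \rho(x,y)(a)r$. Computing the left side gives $\beta(x,y)(ar) = [f(x),f(y),ar]$, and since $R$ is a $3$-Lie $A$-algebra its bracket is $A$-linear in the last slot, so this equals $a[f(x),f(y),r] = a\beta(x,y)r$. Hence the whole of condition~$(3)$ collapses to the requirement that $\rho(x,y)(a)r$ act trivially on $R$. The one relation I can produce for free is the compatibility forced by $f$: applying $f$ to the Rinehart identity \eqref{eq:Rinhart1} of $L$ and combining $f(az)=af(z)$ with the $A$-linearity of the bracket in $R$ yields $\rho(x,y)(a)f(z)=0$ for all $x,y,z\in L$ and $a\in A$. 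This only controls $\rho(x,y)(a)r$ for $r$ in the image $f(L)$, whereas condition~$(3)$ asks for it on all of $R$; so this is the step I expect to need the most care. I anticipate the argument closing cleanly precisely when $\rho(x,y)(a)$ annihilates all of $R$ --- for instance when $f$ is surjective, or under a hypothesis of the form $\rho(L,L)(A)R=0$ in the spirit of the condition $\rho(L,L)A(L)=0$ invoked elsewhere in the paper --- and otherwise the induced $\beta$ is genuinely $A$-linear, so that the $\rho$-term in \eqref{eq:Rinhart1}-type behaviour of $\beta$ is seen only on $f(L)$.
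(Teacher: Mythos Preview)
Your treatment of conditions~(1) and~(2) of Definition~\ref{defin:action} is exactly the paper's approach: the paper verifies $\beta(L\wedge L)\subseteq\mathrm{Der}(R)$ via the Jacobi identity, checks Eqs.~\eqref{eq:mod1} and~\eqref{eq:mod2} by pulling back the regular representation of $R$ through the homomorphism $f$, and establishes $\beta(ax,y)=a\beta(x,y)=\beta(x,ay)$ from $f(ax)=af(x)$ together with the $A$-linearity of the bracket on the $3$-Lie $A$-algebra $R$.

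On condition~(3) you are in fact more careful than the paper. The paper's proof stops after verifying~(1) and~(2) and simply declares $\beta$ an action; it never addresses~(3). Your computation is correct: since $(R,A)$ is a $3$-Lie $A$-algebra the bracket is $A$-linear in the third slot, so $\beta(x,y)(ar)=[f(x),f(y),ar]=a[f(x),f(y),r]=a\beta(x,y)r$, and~(3) reduces to the requirement $\rho(x,y)(a)\,r=0$ for every $r\in R$. The relation you extract by applying $f$ to Eq.~\eqref{eq:Rinhart1}, namely $\rho(x,y)(a)\,f(z)=0$, is indeed the most one can squeeze from the stated hypotheses, and it only controls $r\in f(L)$. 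So the theorem as written needs an extra assumption --- $f$ surjective, or $\rho(L,L)(A)\cdot R=0$, or $\rho=0$ --- for condition~(3) to hold on all of $R$. Your diagnosis of this gap is accurate; the paper's own proof does not close it either.
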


\begin{proof} By Eq \eqref{eq:action1},
 $\forall x,y\in L, r, r_1, r_2, r_3\in R, a\in A,$
 \begin{equation*}
\begin{split}
\beta(x,y)[r_1,r_2,r_3]=&[f(x),f(y),[r_1,r_2,r_3]]\\
=&[[f(x),f(y),r_1],r_2,r_3]+[r_1,[f(x),f(y),r_2],r_3]+[r_1,r_2,[f(x),f(y),r_3]]\\
=&[\beta(x,y)r_1,r_2,r_3]+[r_1,\beta(x,y)r_2,r_3]+[r_1,r_2,\beta(x,y)r_3].\\
 \end{split}
\end{equation*}
 Then $\beta(L\wedge L)\subseteq Der(R)$. Since $f$ is a  3-Lie $A$-algebra homomorphism,
    \begin{equation*}
\begin{split}
[\beta(x_1, y_1), \beta(x_2, y_2)]=&[\mbox{ad}(f(x_1),f(y_1)),\mbox{ad}(f(x_2),f(y_2))]\\
=&\mbox{ad}([f(x_1),f(y_1), f(x_2)], f(y_2))+\mbox{ad}(f(x_2),[f(x_1),f(y_1),f(y_2)])\\
=&\mbox{ad}(f([x_1, y_1, x_2]), f(y_2))+\mbox{ad}[f(x_2),f([x_1,y_1,y_2]))\\
=&\beta([x_1,y_1, x_2], y_2)+\beta(x_2,[x_1,y_1,y_2]).\\
 \end{split}
\end{equation*}
 \begin{equation*}
\begin{split}
\beta([x_1, x_2 x_3],y) =&\mbox{ad}(f([x_1, x_2, x_3]), f(y))
=\mbox{ad}([f(x_1),f(x_2), f(x_3)], f(y))\\
=&\mbox{ad}(f(x_1), f(x_2))\mbox{ad}( f(x_3), f(y))+\mbox{ad}(f(x_2), f(x_3))\mbox{ad}( f(x_1), f(y))\\
&+\mbox{ad}(f(x_3), f(x_1))\mbox{ad}( f(x_2), f(y))\\
=&\beta(x_1, x_2)\beta(x_3, y)+\beta(x_2, x_3)\beta( x_1, y)+\beta(x_3, x_1)\beta( x_2, y),\\
 \end{split}
\end{equation*}
it follows that $(R, \beta)$ is a 3-Lie algebra $L$-module.
Thanks to \begin{equation*}
\beta(ax,y)r=[f(ax),f(y),r]=[af(x),f(y),r]=a[f(x),f(y),r]=a\beta(x,y)r=\beta(x,ay)r,
\end{equation*}
 $\beta$ is an action of $(L, A, \rho)$ on $(R, A)$.
\end{proof}

 By Theorem \ref{thm:LL} and Theorem \ref{thm:do}, for any 3-Lie-Rinehart algebra $(L, A, \rho)$ and a 3-Lie $A$-algebra $R$, $(L\wedge L, \rho_2)$ and  $(W(L,R,A), \rho_3)$ are Lie-Rinehart algebras, where  $\rho_2: L\wedge L\rightarrow Der(A)$ and  $\rho_3:
 W(L,R,A)\rightarrow Der(A)$ are defined by  \eqref{eq:trho} and~ \eqref{eq:do2}, respectively.

   Now we consider an equivalent  relation on $W(L,R,A)$ as follows, for all $(\varphi,x\wedge y), (\varphi',x'\wedge y')\in (W(L,R,A),  \rho_3),$
\begin{equation}\label{eq:relation}
(\varphi,x\wedge y)\sim (\varphi',x'\wedge y')\Leftrightarrow \varphi=\varphi' ~\mbox{and}~ \rho_2(x\wedge y)=\rho_2(x'\wedge y').
 \end{equation}
Denotes
\begin{equation*}
\overline{(\varphi,x\wedge y)}:=\big\{(\varphi',x'\wedge y')~|~ (\varphi',x'\wedge y')\in W(L,R,A) ~\mbox{and}~(\varphi',x'\wedge y')\sim (\varphi,x\wedge y)\big\},
\end{equation*}
$$\overline{W(L,R,A)}=W(L,R,A)/\sim=\{\overline{(\varphi,x\wedge y)}~|~\forall (\varphi,x\wedge y)\in W(L,R,A)\}.
$$
Define linear multiplication $[ ~,~ ]: \overline{W(L,R,A)}\wedge \overline{W(L,R,A)}\rightarrow \overline{W(L,R,A)} $ by
\begin{equation}\label{eq:wmultiplication}
 [\overline{(\varphi_1,x_1\wedge y_1)},\overline{(\varphi_2,x_2\wedge y_2)}]=\overline{([\varphi_1,\varphi_2],[x_1\wedge y_1,x_2\wedge y_2])}.
\end{equation}

 By the above notations, we have the following result.

\begin{lemma}\label{lem:barWar}
$(\overline{W(L,R,A)}, \tilde{\rho_3})$ is a Lie-Rinehart algebra with the multiplication \eqref{eq:wmultiplication}, where
\begin{equation}\label{eq:rhow}
\tilde{\rho_3}: \overline{W(L,R,A)}\rightarrow Der(A),~~ \tilde{\rho_3}(\overline{(\varphi, x\wedge y)})=\rho(x, y),
\end{equation}
for all $x, y\in L$.
\end{lemma}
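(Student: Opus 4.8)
The plan is to show that $\overline{W(L,R,A)}$ inherits the Lie--Rinehart structure of $W(L,R,A)$ by passing to the quotient under the equivalence $\sim$. First I would verify that $\sim$ is a congruence for every piece of structure carried by $W(L,R,A)$, namely the $A$-module action \eqref{eq:do1}, the Lie bracket \eqref{eq:domul}, and the anchor $\rho_3$ of \eqref{eq:do2}. The crucial observation making $\tilde{\rho_3}$ well defined is that $(\varphi, x\wedge y)\sim(\varphi',x'\wedge y')$ forces $\rho_2(x\wedge y)=\rho_2(x'\wedge y')$, i.e. $\rho(x,y)=\rho(x',y')$, so formula \eqref{eq:rhow} does not depend on the chosen representative. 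Thus $\tilde{\rho_3}$ is simply the map induced by $\rho_3$ after the identification, and $\tilde{\rho_3}(\overline{(\varphi,x\wedge y)})=\rho(x,y)$ lands in $\mathrm{Der}(A)$ exactly as $\rho_2$ does in Theorem~\ref{thm:LL}.

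\textbf{Checking the bracket and module structures descend.}
Next I would confirm that the multiplication \eqref{eq:wmultiplication} is well defined, which is the one genuinely substantive point. Suppose $(\varphi_1,x_1\wedge y_1)\sim(\varphi_1',x_1'\wedge y_1')$ and $(\varphi_2,x_2\wedge y_2)\sim(\varphi_2',x_2'\wedge y_2')$; I must show $([\varphi_1,\varphi_2],[x_1\wedge y_1,x_2\wedge y_2])\sim([\varphi_1',\varphi_2'],[x_1'\wedge y_1',x_2'\wedge y_2'])$. The first coordinate is immediate since $\varphi_i=\varphi_i'$ gives $[\varphi_1,\varphi_2]=[\varphi_1',\varphi_2']$. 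For the second coordinate I need $\rho_2([x_1\wedge y_1,x_2\wedge y_2])=\rho_2([x_1'\wedge y_1',x_2'\wedge y_2'])$; but by the computation in the proof of Theorem~\ref{thm:LL}, $\rho_2([x_i\wedge y_i,x_j\wedge y_j])=[\rho_2(x_i\wedge y_i),\rho_2(x_j\wedge y_j)]=[\rho(x_i,y_i),\rho(x_j,y_j)]$, which depends only on the values $\rho(x_i,y_i)$, and these are preserved by $\sim$. Hence \eqref{eq:wmultiplication} is independent of representatives. The $A$-action $b\cdot\overline{(\varphi,x\wedge y)}=\overline{(b\varphi, b\cdot x\wedge y)}$ descends by the same principle, using that $\rho_2(b\cdot x\wedge y)=b\cdot\rho_2(x\wedge y)=b\cdot\rho(x,y)$ is representative-independent.

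\textbf{Transporting the Lie--Rinehart axioms.}
Once well-definedness is established, the remaining axioms transfer formally from $W(L,R,A)$ via the canonical surjection $q\colon W(L,R,A)\to\overline{W(L,R,A)}$, $q(\varphi,x\wedge y)=\overline{(\varphi,x\wedge y)}$. Since $q$ is an algebra-and-module epimorphism intertwining $\rho_3$ with $\tilde{\rho_3}$ (that is, $\tilde{\rho_3}\circ q=\rho_3$), every identity holding in $W(L,R,A)$ by Theorem~\ref{thm:do} pushes forward: the Jacobi identity and skew-symmetry of $[\,,\,]$, the $A$-module axioms for the action \eqref{eq:do1}, the fact that $\tilde{\rho_3}$ is a Lie-algebra homomorphism into $\mathrm{Der}(A)$, and the Rinehart compatibility
\[
[\overline{u_1}, b\cdot\overline{u_2}]=b\cdot[\overline{u_1},\overline{u_2}]+\tilde{\rho_3}(\overline{u_1})b\cdot\overline{u_2}
\]
for $\overline{u_1},\overline{u_2}\in\overline{W(L,R,A)}$, $b\in A$. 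Each such identity is obtained by applying $q$ to the corresponding identity already proved for $W(L,R,A)$ and using $\tilde{\rho_3}\circ q=\rho_3$ and $\rho_3=\rho_2(\cdot)$ on second coordinates.

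\textbf{The main obstacle.}
The only place where something could go wrong is the well-definedness of the bracket in its second coordinate: a priori $[x_1\wedge y_1,x_2\wedge y_2]$ and $[x_1'\wedge y_1',x_2'\wedge y_2']$ are genuinely different elements of $L\wedge L$, so I cannot hope they are equal — I can only hope they have the same image under $\rho_2$. The resolution is precisely that $\sim$ quotients by the kernel of $\rho_2$ on the $L\wedge L$ coordinate, and $\rho_2$ of a bracket is computed from $\rho_2$ of the factors by Theorem~\ref{thm:LL}; so the congruence property is not an accident but a direct consequence of $\rho_2$ being a Lie-algebra homomorphism. Verifying this compatibility carefully is the heart of the argument; everything else is routine transport of structure along the quotient map $q$.
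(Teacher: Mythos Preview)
Your proposal is correct and follows essentially the same approach as the paper: verify that $\tilde{\rho_3}$, the bracket, and the $A$-action are well defined on equivalence classes, and then transport the Lie--Rinehart axioms from $W(L,R,A)$ via Theorem~\ref{thm:do}. The paper's own proof is a single sentence citing \eqref{eq:do}, \eqref{eq:relation}, and Theorem~\ref{thm:do}; you have simply spelled out the well-definedness checks (in particular the use of $\rho_2$ being a Lie homomorphism from Theorem~\ref{thm:LL}) that the paper leaves implicit.
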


\begin{proof} By \eqref{eq:do} and \eqref{eq:relation}, $\tilde{\rho_3}$ is valid. Thanks to Theorem \ref{thm:do}, we get the result.  \end{proof}

\begin{theorem}\label{thm:actioneta}
  Let $(L,A,\rho)$ be a 3-Lie-Rinehart algebra and $(R,A)$ be a Lie A-algebra. Then there is an action $\beta$ of Lie-Rinehart algebra $(L\wedge L,  \rho_2)$ on $(R, A)$ if and only if there exists a Lie-Rinehart algebra homomorphism $\eta: (L\wedge L, \rho_2)\rightarrow (\overline{W(L,R,A)}, \tilde{\rho_3})$.
\end{theorem}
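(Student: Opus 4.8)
The plan is to read off the two directions directly from the way the equivalence relation \eqref{eq:relation} is engineered: a class $\overline{(\varphi,x\wedge y)}\in\overline{W(L,R,A)}$ remembers exactly the derivation $\varphi\in Der(R)$ and the anchor $\rho(x,y)$, and these are precisely the two pieces of data an action must carry. By Lemma~\ref{lem:barWar} the target $(\overline{W(L,R,A)},\tilde{\rho_3})$ is a Lie-Rinehart algebra, so a Lie-Rinehart homomorphism $\eta$ into it makes sense. Throughout I use the Lie-Rinehart analogue of Definition~\ref{defin:action}: an action $\beta$ of $(L\wedge L,\rho_2)$ on the Lie $A$-algebra $(R,A)$ is an $F$-linear map $\beta\colon L\wedge L\to Der(R)$ that is a Lie algebra homomorphism, is $A$-linear in the sense $\beta(b\cdot x\wedge y)=b\,\beta(x\wedge y)$, and satisfies the Leibniz rule $\beta(x\wedge y)(br)=b\,\beta(x\wedge y)r+\rho_2(x\wedge y)(b)\,r$ for all $b\in A$, $r\in R$.

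For the forward implication I would, given an action $\beta$, define $\eta(x\wedge y)=\overline{(\beta(x\wedge y),\,x\wedge y)}$. The key observation is that the Leibniz rule for $\beta$ is \emph{verbatim} the membership condition in \eqref{eq:do} with $\varphi=\beta(x\wedge y)$, so $(\beta(x\wedge y),x\wedge y)\in W(L,R,A)$, and since $\beta$ is $F$-linear the assignment is a well-defined $F$-linear map into $\overline{W(L,R,A)}$. Anchor preservation, $\tilde{\rho_3}(\eta(x\wedge y))=\rho(x,y)=\rho_2(x\wedge y)$, is immediate from \eqref{eq:rhow} and \eqref{eq:trho}. The Lie-homomorphism property follows by comparing the bracket \eqref{eq:wmultiplication} with \eqref{eq:wedge1}, using that the first coordinate of $[\,\overline{(\beta_1,\cdot)},\overline{(\beta_2,\cdot)}\,]$ is $[\beta(x_1\wedge y_1),\beta(x_2\wedge y_2)]=\beta([x_1\wedge y_1,x_2\wedge y_2])$. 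Finally $A$-linearity of $\eta$ follows from \eqref{eq:do1} together with $\beta(b\cdot x\wedge y)=b\,\beta(x\wedge y)$: both sides equal $\overline{(b\,\beta(x\wedge y),\,b\cdot x\wedge y)}$.

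For the converse I would exploit that $\sim$ fixes the first coordinate, so the projection $p_1\colon\overline{W(L,R,A)}\to Der(R)$, $\overline{(\varphi,x\wedge y)}\mapsto\varphi$, is well defined; by \eqref{eq:wmultiplication} it is a Lie homomorphism and by \eqref{eq:do1} it satisfies $p_1(b\cdot w)=b\,p_1(w)$. Given a Lie-Rinehart homomorphism $\eta$, I set $\beta:=p_1\circ\eta$; then $\beta$ is automatically $F$-linear, a Lie homomorphism, and $A$-linear. The only point requiring care is the Leibniz rule: writing $\eta(x\wedge y)=\overline{(\varphi,u\wedge v)}$ for some representative, membership in $W(L,R,A)$ gives $\varphi(br)=b\,\varphi(r)+\rho(u,v)(b)\,r$, while anchor preservation of $\eta$ forces $\rho(u,v)=\tilde{\rho_3}(\eta(x\wedge y))=\rho_2(x\wedge y)=\rho(x,y)$. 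Hence $\varphi(br)=b\,\beta(x\wedge y)r+\rho_2(x\wedge y)(b)\,r$, which is exactly the required Leibniz rule, even though the representative wedge $u\wedge v$ need not equal $x\wedge y$.

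The main obstacle is this last synchronization step together with the well-definedness checks on the quotient. One must verify that $p_1$ and $\tilde{\rho_3}$ descend to $\overline{W(L,R,A)}$, and then recognize that the anchor-preservation property of $\eta$ is precisely what guarantees that the (a priori ambiguous) anchor term $\rho(u,v)$ attached to the derivation $\varphi=\beta(x\wedge y)$ equals the correct $\rho(x,y)$. Once this is in place, every remaining identity reduces to transporting the module axioms \eqref{eq:mod1}--\eqref{eq:mod2} and the structural definitions \eqref{eq:do1}, \eqref{eq:wmultiplication}, \eqref{eq:rhow} through the two projections, which is routine. The two passages constructed above are visibly mutually inverse on the level of the underlying derivations $\beta(x\wedge y)\leftrightarrow\varphi$, so existence on one side is equivalent to existence on the other.
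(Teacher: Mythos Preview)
Your proposal is correct and follows essentially the same approach as the paper: in both directions the constructions are identical, namely $\eta(x\wedge y)=\overline{(\beta(x\wedge y),x\wedge y)}$ and $\beta(x\wedge y)=$ first coordinate of $\eta(x\wedge y)$. Your treatment of the converse is in fact slightly more careful than the paper's, since the paper writes $\eta(x\wedge y)=\overline{(\varphi_{x\wedge y},x\wedge y)}$ without justifying that the second coordinate can be chosen as $x\wedge y$, whereas you work with an arbitrary representative $\overline{(\varphi,u\wedge v)}$ and use anchor preservation to identify $\rho(u,v)$ with $\rho(x,y)$.
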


\begin{proof} By Lemma \ref{lem:barWar}, $(\overline{W(L,R,A)}, \tilde{\rho_3})$ is a Lie-Rinehart algebra.

If $\eta: (L\wedge L, \rho_2)\rightarrow (\overline{W(L,R,A)}, \tilde{\rho_3})$ is  a Lie-Rinehart algebra homomorphism. For all $x, y\in L,$ denotes
\begin{equation}\label{eq:eta}
\eta(x\wedge y)=\overline{(\varphi_{x\wedge y},x\wedge y)} \in \overline{W(L, R, A)}.
\end{equation}

Thanks to Eq \eqref{eq:relation}, we can define linear mapping $ \beta: L\wedge L\rightarrow Der(R)$
by, $~\forall x\wedge y\in L\wedge L,$
   \begin{equation}\label{eq:beta}
   \beta(x, y):=\varphi_{x\wedge y},~~  ~~\mbox{where}~~\overline{(\varphi_{x\wedge y},x\wedge y)} \in \overline{W(L, R, A)}.
  \end{equation}
Thanks to \eqref{eq:relation}, $\beta$ is valid. Since $\eta$ is an algebra homomorphism, and \eqref{eq:wedge1} and \eqref{eq:eta},  we have $\forall x_1, x_2, y_1, y_2\in L$,

 \begin{equation*}
 \begin{split}
 &\eta[x_1\wedge y_1,x_2\wedge y_2]\\
 =&\eta([x_1,y_1,x_2]\wedge y_2+x_2\wedge [x_1,y_1,y_2])\\
 =&\overline{(\varphi_{[x_1,y_1,x_2]\wedge y_2}+\varphi_{x_2\wedge [x_1,y_1,y_2]},[x_1,y_1,x_2]\wedge y_2+x_2\wedge [x_1,y_1,y_2])}\\
 =&\overline{(\varphi_{[x_1,y_1,x_2]\wedge y_2}+\varphi_{x_2\wedge [x_1,y_1,y_2]},[x_1\wedge y_1,x_2\wedge y_2])}\\
 =&[\eta(x_1\wedge y_1),\eta(x_2\wedge y_2)]\\
 =&[\overline{(\varphi_{x_1\wedge y_1},x_1\wedge y_1)},\overline{(\varphi_{x_2\wedge y_2},x_2\wedge y_2)}]\\
 =&\overline{([\varphi_{x_1\wedge y_1},\varphi_{x_2\wedge y_2}],[x_1\wedge y_1,x_2\wedge y_2])}.
 \end{split}
 \end{equation*}
Then we have
\begin{equation*}
[\varphi_{x_1\wedge y_1},\varphi_{x_2\wedge y_2}]=\varphi_{[x_1,y_1,x_2]\wedge y_2}+\varphi_{x_2\wedge [x_1,y_1,y_2]}.
\end{equation*}
 Thanks to Eq \eqref{eq:beta},
\begin{equation*}
\begin{split}
&[\beta(x_1\wedge y_1),\beta(x_2\wedge y_2)]=[\varphi_{x_1\wedge y_1},\varphi_{x_2\wedge y_2}]=\varphi_{[x_1,y_1,x_2]\wedge y_2}+\varphi_{x_2\wedge [x_1,y_1,y_2]}\\
=&\beta([x_1,y_1,x_2]\wedge y_2)+\beta(x_2\wedge [x_1,y_1,y_2])=\beta[x_1\wedge y_1,x_2\wedge y_2],\\
\end{split}
\end{equation*}
and $(R,\beta)$ is a Lie algebra $L\wedge L$-module.

Since $\eta$ is an $A$-module homomorphism and  Eqs \eqref{eq:actionad} and \eqref{eq:eta}, we hve
$$\eta(b\cdot x\wedge y)=\frac{1}{2}\overline{(\varphi_{bx\wedge y}+\varphi_{x\wedge by},bx\wedge y+x\wedge by)}=\overline{(b\cdot \varphi_{x\wedge y},b\cdot x\wedge y)}=b\eta(x\wedge y),
$$
for all $x,y\in L, b\in B$. Therefore,
\begin{equation*}
b\cdot \varphi_{x\wedge y}=\frac{1}{2}\varphi_{bx\wedge y}+\frac{1}{2}\varphi_{x\wedge by},
\end{equation*}
and
 \begin{equation*}
 \begin{split}
 &\beta(b\cdot x\wedge y) =\beta(\frac{1}{2}bx\wedge y+\frac{1}{2}x\wedge by)=\frac{1}{2}\beta(bx\wedge y)+\frac{1}{2}\beta(x\wedge by)\\
 =&\frac{1}{2}\varphi_{bx\wedge y}+\frac{1}{2}\varphi_{x\wedge by} =b\cdot \varphi_{x\wedge y} =b\beta(x\wedge y).
 \end{split}
  \end{equation*}

  By Eqs \eqref{eq:do} and \eqref{eq:beta}, for all $x, y\in L$, $b\in A$ and $r\in R,$
\begin{equation*}
 \begin{split}
\beta(x\wedge y)(br)=&\varphi_{x\wedge y}(br)=b\varphi_{x\wedge y}r+\rho_2(x\wedge y)br=b\beta(x\wedge y)r+\rho_2(x\wedge y)br.
\end{split}
  \end{equation*}

  Therefore, $\beta$ is an action of Lie-Rinehart algebra $(L\wedge L, \rho_2)$ on $(R, A)$.

  Conversely, suppose $\beta: L\wedge L\rightarrow Der(R)$  is an action of Lie-Rinehart algebra $(L\wedge L, \rho_2)$ on $(R, A)$. Define linear mapping
 \begin{equation*}
 \eta: L\wedge L \rightarrow \overline{W(L,R,A)},~~
 \eta(x\wedge y)=\overline{(\beta(x\wedge y),x\wedge y)}, ~~\forall x, y\in L.
  \end{equation*}
Since $\beta(x\wedge y) (br)=b\beta(x\wedge y)r+\rho_2(x\wedge y)b r=b\beta(x\wedge y)r+\rho(x, y)b r$, and Eq \eqref{eq:relation}, $\eta$ is valid.    By  \eqref{eq:wedge1} and \eqref{eq:actionad},
 \begin{equation*}
 \begin{split}
 &\eta[x_1\wedge y_1,x_2\wedge y_2]\\
 =&\eta([x_1,y_1,x_2]\wedge y_2+x_2\wedge [x_1,y_1,y_2])\\
 =&\overline{(\beta([x_1,y_1,x_2]\wedge y_2)+\beta(x_2\wedge [x_1,y_1,y_2]),[x_1,y_1,x_2]\wedge y_2+x_2\wedge [x_1,y_1,y_2])}\\
 =&\overline{(\beta([x_1,y_1,x_2]\wedge y_2)+\beta(x_2\wedge [x_1,y_1,y_2]),[x_1\wedge y_1,x_2\wedge y_2])}\\
 =&\overline{([\beta(x_1\wedge y_1),\beta(x_2\wedge y_2)],[x_1\wedge y_1,x_2\wedge y_2])}\\
  =&[\overline{(\beta(x_1\wedge y_1),x_1\wedge y_1)},\overline{(\beta(x_2\wedge y_2),x_2\wedge y_2)}]\\
 =&[\eta(x_1\wedge y_1),\eta(x_2\wedge y_2)].
 \end{split}
  \end{equation*}

  It follows that $\eta$ is a Lie algebra homomorphism. Thanks to  \eqref{eq:actionad},  $\eta$ is an $A$-module homomorphism and satisfies
\begin{equation*}
\tilde{\rho_3}(\eta(x\wedge y))=\rho_2(x\wedge y)=\rho(x, y), \forall x, y\in L.
\end{equation*}

Therefore $\eta$ is a Lie-Rinehart algebra homomorphism. The proof is complete.
 \end{proof}

\section{Derivation of 3-Lie-Rinehart algebras}

In this section, we discuss the  derivations from 3-Lie-Rinehart algebras over $A$ to $3$-Lie $A$-algebras.

\begin{defn}\label{defin:derivation}
  Let $(L, A, \rho)$ be a 3-Lie-Rinehart algebra, $(R, A)$ be a 3-Lie $A$-algebra and $\beta: L\wedge L\rightarrow Der(R)$ be an action of $(L,  A, \rho)$ on $(R, A)$.  If an $A$-linear mapping  $\psi: L\rightarrow R$ has the property:  for all $x, y, z\in L$,
\begin{equation}\label{eq:derivation}
\psi([x,y,z])=[\psi(x), \psi(y), \psi(z)]+\beta(x,y)\psi(z)+\beta(y,z)\psi(x)+\beta(z,x)\psi(y),
\end{equation}
then $\psi$ is called {\bf a 3-Lie-Rinehart derivation from $(L, A, \rho)$ to $(R, A)$}, and  $Der_{\beta}(L, R)$ denotes all 3-Lie-Rinehart derivations from $(L, A, \rho)$ to $(R, A)$.
\end{defn}

\begin{theorem}\label{thm:der}
Let $(L, A, \rho)$ be a 3-Lie-Rinehart algebra, $(R, A)$ be a 3-Lie $A$-algebra, $\beta: L\wedge L\rightarrow gl(R)$ be an action of $(L,  A, \rho)$ on $(R, A)$, and $\psi\in Der_{\beta}(L, R)$.
  Then $(R, \bar{\beta})$ is a 3-Lie algebra $ L\ltimes R$-module, and   $\bar{\psi}\in Der_{\bar{\beta}}(L\ltimes R, R)$, where
  $\bar{\beta}: (L\ltimes R)\wedge (L\ltimes R) \rightarrow gl(R),$
  $\bar{\psi}: L\ltimes R\rightarrow R$ defined by
  \begin{equation}\label{eq:barbeta}
 \bar{\beta}(x_1+r_1, x_2+r_2)=\beta(x_1, x_2), ~~~ \bar{\psi}(x+r)=\psi(x), ~~ \forall x, x_1, x_2\in L, r, r_1, r_2\in R,
  \end{equation}
  and  $\bar{\psi}$ is also a derivation of the 3-Lie algebra $L\ltimes R$ ($\bar{\psi}: L\ltimes R\rightarrow R\subset L\ltimes R $).
\end{theorem}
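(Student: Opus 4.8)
The plan is to treat the three assertions separately, using throughout that both $\bar\beta$ and $\bar\psi$ factor through the projection $\pi\colon L\ltimes R\to L$, $\pi(x+r)=x$, which is a $3$-Lie algebra homomorphism (cf. Corollary \ref{coro:pai}). I write $X_i=x_i+r_i\in L\ltimes R$ and use that the $L$-component of $[X_1,X_2,X_3]$ is $[x_1,x_2,x_3]$, that $R$ sits in $L\ltimes R$ as an ideal, and that $\mathrm{ad}(X_1,X_2)$ restricted to $R$ is $\beta(x_1,x_2)$.

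For the first assertion I would observe that $\bar\beta=\beta\circ(\pi\wedge\pi)$ by \eqref{eq:barbeta}. Since $\pi$ is a homomorphism, $\bar\beta([X_1,X_2,X_3],X_4)=\beta([x_1,x_2,x_3],x_4)$, so the two representation axioms \eqref{eq:mod1} and \eqref{eq:mod2} for $\bar\beta$ reduce verbatim to the ones already satisfied by the $L$-representation $\beta$. This is precisely the pullback mechanism of Theorem \ref{thm:induced}, and it yields that $(R,\bar\beta)$ is a $3$-Lie algebra $L\ltimes R$-module.

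For the second assertion I would first check $A$-linearity, $\bar\psi(a(x+r))=\psi(ax)=a\psi(x)=a\bar\psi(x+r)$, and then substitute $\bar\psi(X_i)=\psi(x_i)$ and $\bar\beta(X_i,X_j)=\beta(x_i,x_j)$ into the defining identity \eqref{eq:derivation} for a $\bar\beta$-derivation of $L\ltimes R$. Both sides collapse onto their $L$-data: the left side is $\psi([x_1,x_2,x_3])$, while the right side is $[\psi(x_1),\psi(x_2),\psi(x_3)]+\beta(x_1,x_2)\psi(x_3)+\beta(x_2,x_3)\psi(x_1)+\beta(x_3,x_1)\psi(x_2)$, so the identity for $\bar\psi$ is literally the hypothesis $\psi\in Der_{\beta}(L,R)$. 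Hence $\bar\psi\in Der_{\bar\beta}(L\ltimes R,R)$.

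For the last assertion I would expand the Leibniz rule $\bar\psi[X_1,X_2,X_3]=[\bar\psi(X_1),X_2,X_3]+[X_1,\bar\psi(X_2),X_3]+[X_1,X_2,\bar\psi(X_3)]$ inside $L\ltimes R$ via the semidirect-product multiplication \eqref{eq:semiproductR}. Since each $\bar\psi(X_i)=\psi(x_i)$ lies in the ideal $R$, every term splits into an action part, which reassembles $\beta(x_2,x_3)\psi(x_1)+\beta(x_3,x_1)\psi(x_2)+\beta(x_1,x_2)\psi(x_3)$, together with internal $R$-bracket contributions such as $[\psi(x_1),r_2,r_3]$. Comparing against \eqref{eq:derivation}, the $\beta$-parts match automatically, so the whole identity comes down to showing that these internal $R$-bracket terms account for the cubic term $[\psi(x_1),\psi(x_2),\psi(x_3)]$ produced by the $Der_{\beta}$-condition. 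I expect this to be the main obstacle, since it is the one place where the full $3$-Lie bracket of $R$ inside $L\ltimes R$ (beyond the action $\beta$ alone) genuinely enters; I would handle it by expanding each mixed triple bracket by \eqref{eq:semiproductR}, invoking skew-symmetry of the bracket and the derivation identity for $\bar\psi$ already established in the second assertion, and matching terms with care.
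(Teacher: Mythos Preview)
Your treatment of the first two assertions is correct and coincides with the paper's: since $\bar\beta=\beta\circ(\pi\wedge\pi)$ and $\bar\psi=\psi\circ\pi$ with $\pi$ a $3$-Lie homomorphism, both the module axioms for $\bar\beta$ and the $Der_{\bar\beta}$-identity for $\bar\psi$ reduce verbatim to the hypotheses on $\beta$ and $\psi$. The paper argues in exactly this way (invoking Theorem~\ref{thm:module} for the ambient structure and then substituting).

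Your plan for the third assertion, however, rests on a misreading of the semidirect product. In $L\ltimes R$ as defined by \eqref{eq:semiproductR}, $R$ sits as an \emph{abelian} ideal: the $L\ltimes R$-bracket of any three elements of $R$ vanishes, irrespective of $R$'s own $3$-Lie bracket. Concretely,
\[
[\bar\psi(X_1),X_2,X_3]_{L\ltimes R}=[0+\psi(x_1),\,x_2+r_2,\,x_3+r_3]=\beta(x_2,x_3)\psi(x_1),
\]
with no residual term of the form $[\psi(x_1),r_2,r_3]$; such terms simply do not occur. Summing the three Leibniz pieces therefore yields exactly $\beta(x_2,x_3)\psi(x_1)+\beta(x_3,x_1)\psi(x_2)+\beta(x_1,x_2)\psi(x_3)$, while the left side $\bar\psi([X_1,X_2,X_3])=\psi([x_1,x_2,x_3])$ equals, by \eqref{eq:derivation}, this same sum \emph{plus} $[\psi(x_1),\psi(x_2),\psi(x_3)]_R$. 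There is nothing on the right to absorb that cubic term, so the strategy of ``matching terms with care'' cannot close the gap you yourself identified. The paper's proof merely asserts the Leibniz identity in $L\ltimes R$ without displaying this step; in fact the identity holds precisely when $[\psi(x_1),\psi(x_2),\psi(x_3)]_R=0$, which is automatic in the abelian (module) situation used in Theorem~\ref{thm:der2} but is not guaranteed under the stated hypothesis that $(R,A)$ is a general $3$-Lie $A$-algebra. You should note this restriction rather than look for cancellations that are not present.
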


\begin{proof} Thanks to Theorem \ref{thm:module}, $(L\ltimes R, A, \rho_4)$ is a 3-Lie-Rinehart algebra.  Apply Eq \eqref{eq:semiproduct} and Definition \ref{defin:derivation},
$(R, \bar{\beta})$ is  a 3-Lie algebra $L\ltimes R$-module.
Thanks to Eqs \eqref{eq:semiproduct} and \eqref{eq:derivation}, $\forall   x_1, x_2, x_3\in L, ~r_1,r_2,r_3\in R$,
 \begin{equation*}
\begin{split}
&\bar{\psi}([x_1+r_1, x_2+r_2, x_3+r_3])=\bar{\psi}([x_1,x_2,x_3]+\beta(x_1,x_2)r_3+\beta(x_2,x_3)r_1+\beta(x_3,x_1)r_2\big)\\
=&[\psi(x_1),\psi(x_2), \psi(x_3)]+\beta(x_1,x_2)\psi(x_3)+\beta(x_2,x_3)\psi(x_1)+\beta(x_3,x_1)\psi(x_2)\\
=&[\bar{\psi}(x_1+r_1),\bar{\psi}(x_2+r_2),\bar{\psi}(x_3+r_3)]+\bar{\beta}(x_1+r_1,x_2+r_2)\bar{\psi}(x_3+r_3)\\
&+\bar{\beta}(x_2+r_2, x_3+r_3)\bar{\psi}(x_1+r_1)+\bar{\beta}(x_3+r_3, x_1+r_1)\bar{\psi}(x_2+r_2).\\
\end{split}
\end{equation*}
  Therefore,  $\bar{\psi}\in Der_{\bar{\beta}}(L\ltimes R, R)$  and
  \begin{equation*}
\begin{split}
\bar{\psi}([x_1+r_1, x_2+r_2, x_3+r_3])=&[\bar{\psi}(x_1+r_1),x_2+r_2, x_3+r_3]]+[x_1+r_1,\bar{\psi}(x_2+r_2), x_3+r_3]\\
&+[x_1+r_1, x_2+r_2, \bar{\psi}(x_3+r_3)].\\
\end{split}
\end{equation*}

It follows that $\bar{\psi}$ is also a derivation of the 3-Lie algebra $L\ltimes R$.
 \end{proof}

\begin{coro}\label{coro:der1}
  Let $(L,A,\rho)$ be a 3-Lie-Rinehart algebra,  $R=L$ (the copy of $L$ as an $A$-module) be  an abelian 3-Lie $A$-algebra, and  $(R, \mbox{ad})$ be the regular representation of the 3-Lie algebra $L$. Then for $F$-linear mapping
   $\psi: L\rightarrow R$, $\psi\in Der_{\mbox{ad}}(L, R)$  if and only if $\psi\in Der_A(L)$, where $Der_A(L)$ is defined by \eqref{eq:Ader}.
 \end{coro}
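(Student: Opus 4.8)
The plan is to unwind both memberships directly and observe that the defining equations coincide once the specific data are substituted. First I would fix an arbitrary $F$-linear map $\psi\colon L\to R$ and write out the 3-Lie-Rinehart derivation condition \eqref{eq:derivation} for the data of the corollary, namely $\beta=\mbox{ad}$ and $R=L$ carrying the zero bracket. Because $R$ is abelian, the term $[\psi(x),\psi(y),\psi(z)]$ evaluated in $R$ vanishes, and because $\beta=\mbox{ad}$ we have $\beta(x,y)\psi(z)=[x,y,\psi(z)]$ (and cyclically), where $[\,,\,]$ is the bracket of $L$ acting on $R=L$ through \eqref{eq:ad}. Hence \eqref{eq:derivation} collapses to
\begin{equation*}
\psi([x,y,z])=[x,y,\psi(z)]+[y,z,\psi(x)]+[z,x,\psi(y)],\quad\forall x,y,z\in L.
\end{equation*}

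Next I would use the invariance of the 3-Lie bracket under cyclic permutations to move each occurrence of $\psi$ into a fixed slot, namely $[y,z,\psi(x)]=[\psi(x),y,z]$ and $[z,x,\psi(y)]=[x,\psi(y),z]$. The displayed identity then becomes
\begin{equation*}
\psi([x,y,z])=[\psi(x),y,z]+[x,\psi(y),z]+[x,y,\psi(z)],
\end{equation*}
which is exactly the statement that $\psi$ is a derivation of the 3-Lie algebra $L$, i.e. $\psi\in Der(L)$.

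Finally I would account for the $A$-linearity requirement, which occurs on both sides: a 3-Lie-Rinehart derivation is by Definition \ref{defin:derivation} an $A$-linear map, and membership in $Der_A(L)$ as given by \eqref{eq:Ader} also imposes $\psi(ax)=a\psi(x)$. Combining the two observations, an $F$-linear $\psi$ lies in $Der_{\mbox{ad}}(L,R)$ precisely when it is $A$-linear and satisfies the 3-Lie derivation identity, which is exactly membership in $Der_A(L)$; this establishes both implications simultaneously.

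I do not expect a genuine obstacle here: the argument is a substitution followed by a cyclic-symmetry rearrangement, with the $A$-linearity clause matching verbatim. The only point requiring care is conceptual rather than computational — by the Remark preceding Theorem \ref{thm:action1}, $\mbox{ad}$ need not satisfy Definition \ref{defin:action} unless $\rho=0$, so one should read $Der_{\mbox{ad}}(L,R)$ as the set of $A$-linear maps satisfying \eqref{eq:derivation} with $\beta=\mbox{ad}$, the equation itself being well defined in any case. It is also worth double-checking that the cyclic rewriting preserves signs (cyclic permutations of three slots are even), since a sign slip there would spuriously produce the condition for an anti-derivation instead.
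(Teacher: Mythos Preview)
Your proposal is correct and is essentially the same direct unwinding of Definition~\ref{defin:derivation} that the paper has in mind; the paper's proof is the one-liner ``Apply Theorem~\ref{thm:der}'', but the substantive content is exactly the substitution $\beta=\mbox{ad}$, the vanishing of $[\psi(x),\psi(y),\psi(z)]_R$, and the cyclic rewriting you carry out. Your remark about the status of $\mbox{ad}$ vis-\`a-vis Definition~\ref{defin:action} is well taken and worth keeping.
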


\begin{proof}
Apply Theorem \ref{thm:der}.
 \end{proof}

Let $(L, A, \rho)$ be a 3-Lie-Rinehart algebra, $(R, A)$ be a 3-Lie $A$-algebra and $\beta: L\wedge L\rightarrow gl(R)$ be an action of $(L,  A, \rho)$ on $(R, A)$.  By Theorem \ref{thm:der}, $(R, \bar{\beta})$ is a 3-Lie algebra $L\ltimes R$-module, where
$\bar{\beta}$ is defined by \eqref{eq:barbeta}. Furthermore, If $R$ is an abelian 3-Lie algebra, we get the following result.

\begin{theorem}\label{thm:der2}
 Let $(L, A, \rho)$ be a 3-Lie-Rinehart algebra, $(R, \beta)$ be an $(L, A, \rho)$-module. Then  for all $ x\in L, r\in R$, $\psi(x, r)\in Der_{\bar{\beta}}(L\ltimes R,R)$, where
 \begin{equation}\label{eq:semider}
  \psi(x, r): L\ltimes R\rightarrow R, \quad \psi(x,r)(z+r')=\beta(z,x)r, ~~  \forall  z\in L, r\in R.
 \end{equation}
\end{theorem}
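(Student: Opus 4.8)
The plan is to verify directly that the map $\psi(x,r)$ meets the two requirements of Definition \ref{defin:derivation} for membership in $Der_{\bar{\beta}}(L\ltimes R, R)$: that it is $A$-linear, and that it satisfies the derivation identity \eqref{eq:derivation} for the 3-Lie-Rinehart algebra $(L\ltimes R, A, \rho_4)$ acting on the abelian 3-Lie $A$-algebra $(R,A)$ via $\bar{\beta}$. The ambient structures are already available: $(L\ltimes R, A, \rho_4)$ is a 3-Lie-Rinehart algebra by Theorem \ref{thm:module}, and $(R,\bar{\beta})$ is the induced $L\ltimes R$-module of Theorem \ref{thm:der}. The decisive observation is that $\psi(x,r)$ depends only on the $L$-component of its argument, so the $R$-components will play no role on the left-hand side.

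First I would check $A$-linearity. For $a\in A$ and $z+r'\in L\ltimes R$ we have $a(z+r')=az+ar'$, so $\psi(x,r)\big(a(z+r')\big)=\beta(az,x)r$; by property $(2)$ of Definition \ref{defin:action} this equals $a\,\beta(z,x)r=a\,\psi(x,r)(z+r')$, giving $A$-linearity. ($F$-linearity is immediate from the $F$-bilinearity of $\beta$.)

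Next I would expand both sides of \eqref{eq:derivation}. Writing $X=z_1+r_1$, $Y=z_2+r_2$, $Z=z_3+r_3$, the semidirect bracket \eqref{eq:semiproductR} gives $[X,Y,Z]=[z_1,z_2,z_3]+\beta(z_1,z_2)r_3+\beta(z_2,z_3)r_1+\beta(z_3,z_1)r_2$, whose $L$-component is $[z_1,z_2,z_3]$. Since $\psi(x,r)$ reads off only the $L$-part, the left-hand side collapses to $\beta([z_1,z_2,z_3],x)r$. On the right-hand side the bracket $[\psi(x,r)(X),\psi(x,r)(Y),\psi(x,r)(Z)]$ vanishes because $R$ is abelian (this is where the module hypothesis on $(R,\beta)$ enters), while the definition $\bar{\beta}(X,Y)=\beta(z_1,z_2)$ from \eqref{eq:barbeta} together with $\psi(x,r)(X)=\beta(z_1,x)r$ turn the remaining three terms into $\beta(z_1,z_2)\beta(z_3,x)r+\beta(z_2,z_3)\beta(z_1,x)r+\beta(z_3,z_1)\beta(z_2,x)r$.

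The two sides therefore agree precisely when $\beta([z_1,z_2,z_3],x)=\beta(z_1,z_2)\beta(z_3,x)+\beta(z_2,z_3)\beta(z_1,x)+\beta(z_3,z_1)\beta(z_2,x)$ as operators applied to $r$, which is exactly the module axiom \eqref{eq:mod2} for $\beta$. Thus there is no genuine obstacle; the only points requiring care are recognizing that the $R$-components drop out on the left, invoking the abelian hypothesis to kill the cubic term on the right, and matching the residual identity to \eqref{eq:mod2}. Assembling these gives $\psi(x,r)\in Der_{\bar{\beta}}(L\ltimes R, R)$.
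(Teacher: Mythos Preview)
Your proof is correct and follows essentially the same route as the paper's: both compute the left side of \eqref{eq:derivation} as $\beta([z_1,z_2,z_3],x)r$ via \eqref{eq:semiproductR}, kill the cubic term on the right by abelianness of $R$, and match the remaining three terms to the module axiom \eqref{eq:mod2}. You additionally verify $A$-linearity of $\psi(x,r)$ using property~(2) of Definition~\ref{defin:action}, a point the paper's proof leaves implicit but which is required by Definition~\ref{defin:derivation}.
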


 \begin{proof}  Since $(R, \beta)$ be an $(L, A, \rho)$-module, $R$ is an abelian 3-$A$-Lie algebra, $[R, R, R]=0$. By Theorem \ref{thm:der}, $(R, \bar{\beta})$ is a 3-Lie-Rinehart algebra $(L\ltimes R, A, \rho_4)$-module, and by  \eqref{eq:barbeta},\eqref{eq:semider} and \eqref{eq:semiproductR},
    $\forall x, x_1, x_2, x_3\in L$, $r, r_1, r_2, r_3\in R$,
 \begin{equation*}
\begin{split}
  &\psi(x, r)[x_1+r_1, x_2+r_2, x_3+r_3]\\
  =&\psi(x, r)([x_1,x_2,x_3]+\beta(x_1,x_2)r_3+\beta(x_2,x_3)r_1+\beta(x_3,x_1)r_2\big)\\
  =&\beta([x_1,x_2,x_3],x)r,\\\\
&[\psi(x, r)(x_1+r_1), \psi(x, r)(x_2+r_2), \psi(x, r)(x_3+r_3)]+\bar{\beta}(x_1+r_1, x_2+r_2)\psi(x, r)(x_3+r_3)\\
+&\bar{\beta}(x_2+r_2, x_3+r_3)\psi(x, r)(x_1+r_1)+\bar{\beta}(x_3+r_3, x_1+r_1)\psi(x, r)(x_2+r_2)\\
=&\beta(x_1, x_2)\beta(x_3,x)r+\beta(x_2, x_3)\beta(x_1, x)r+\beta(x_3, x_1)\beta(x_2, x)r\\
=&\beta([x_1,x_2,x_3],x)r.
 \end{split}
\end{equation*}

Therefore, for all $x\in L, r\in R$, $\psi(x, r)$ satisfies \eqref{eq:derivation}. The result follows.
\end{proof}

\begin{theorem}\label{thm:inducedder}
  Let $(L, A, \rho)$ and  $(L', A, \rho')$ be 3-Lie-Rinehart algebras and $(R,\beta)$ be an $(L, A, \rho)$-module. If $f: L'\rightarrow L$ is a 3-Lie-Rinehart  homomorphism,  then for any $\psi\in Der_{\beta}(L, R)$, we have
 $\psi'=\psi f\in Der_{\beta'}(L', R)$,  and $\psi'$ is called {\bf an $f$-$A$-derivation}, where $(R, \beta')$ is the $(L', A, \rho')$-module induced by $f$ in Theorem \ref{thm:induced}.
 \end{theorem}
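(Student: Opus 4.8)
The plan is to verify directly that $\psi' = \psi f$ meets the two requirements in Definition \ref{defin:derivation} relative to the induced module $(R,\beta')$: namely, that it is $A$-linear and that it satisfies the 3-Lie-Rinehart derivation identity \eqref{eq:derivation} with $\beta$ replaced by $\beta'$. We may take for granted, by Theorem \ref{thm:induced}, that $(R,\beta')$ is genuinely an $(L',A,\rho')$-module, so the only thing at issue is the derivation property of $\psi'$ itself.

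First I would dispose of $A$-linearity. Since $\psi$ is $A$-linear and $f$ is a 3-Lie-Rinehart homomorphism, hence $A$-linear, for all $a\in A$ and $x'\in L'$ we get $\psi'(ax') = \psi(f(ax')) = \psi(af(x')) = a\psi(f(x')) = a\psi'(x')$, so $\psi'$ is $A$-linear as required.

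The core computation is the derivation identity. For $x',y',z'\in L'$, I would use that $f$ is in particular a 3-Lie algebra homomorphism to rewrite $\psi'([x',y',z']) = \psi(f([x',y',z'])) = \psi([f(x'),f(y'),f(z')])$, and then apply the hypothesis $\psi\in Der_\beta(L,R)$, i.e. \eqref{eq:derivation}, to the triple $f(x'),f(y'),f(z')\in L$. This yields
\begin{equation*}
\begin{split}
\psi'([x',y',z'])
=&[\psi(f(x')),\psi(f(y')),\psi(f(z'))]+\beta(f(x'),f(y'))\psi(f(z'))\\
&+\beta(f(y'),f(z'))\psi(f(x'))+\beta(f(z'),f(x'))\psi(f(y')).
\end{split}
\end{equation*}
Now I substitute the definitions $\psi'(\cdot)=\psi(f(\cdot))$ and, by \eqref{eq:inducedbeta}, $\beta'(u',v')=\beta(f(u'),f(v'))$, so that every term on the right is expressed through $\psi'$ and $\beta'$. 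The result is precisely identity \eqref{eq:derivation} for $\psi'$ with respect to $\beta'$, which establishes $\psi'\in Der_{\beta'}(L',R)$.

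I do not expect a genuine obstacle here; the argument is a clean pullback, and the one point to keep straight is that the three structural facts about $f$ — it preserves the 3-bracket, it is $A$-linear, and it intertwines $\rho'$ with $\rho$ — are exactly what make the substitution into $\beta'$ legitimate and the derivation identity transport correctly from $L$ to $L'$.
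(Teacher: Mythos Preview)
Your proposal is correct and follows essentially the same route as the paper: invoke Theorem \ref{thm:induced} for the induced module, note $A$-linearity of $\psi'=\psi f$, then apply \eqref{eq:derivation} for $\psi$ to the triple $f(x'),f(y'),f(z')$ and rewrite via \eqref{eq:inducedbeta}. The only cosmetic difference is that you spell out the $A$-linearity step explicitly, whereas the paper merely asserts it.
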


\begin{proof} Thanks to Theorem \ref{thm:induced}
 $(R, \beta')$ is an $(L', A, \rho')$-module, where $\beta'$ is defined by \eqref{eq:inducedbeta}, and $\psi'=\psi f: L'\rightarrow R$ be an $A$-linear mapping.

 By \eqref{eq:derivation}, $\forall x', y', z'\in L'$,
 \begin{equation*}
\begin{split}
 &\psi'([x',y',z'])=\psi f([x',y',z'])=\psi([f(x'), f(y'), f(z')])\\
 =&[\psi(f(x')), \psi(f(y')), \psi(f(z'))]+\beta(f(x'),f(y'))\psi(f(z'))\\
 +&\beta(f(y'),f(z'))\psi(f(x'))+\beta(f(z'),f(x'))\psi(f(y'))\\
 =&[\psi'(x'), \psi'(y'), \psi'(z')]+\beta'(x',y')\psi'(z')+\beta'(y',z')\psi'(x')+\beta'(z',x')\psi'(y').
 \end{split}
\end{equation*}
It follows that $\psi'\in Der_{\beta'}(L', R)$.
\end{proof}

\begin{theorem}
  Let $(L, A, \rho)$ and $(L', A, \rho')$ be 3-Lie-Rinehart algebras,  and $(R,\beta)$ be an $(L, A, \rho)$-module.
  Then for any $3$-Lie-Rinehart  homomorphism $f: L'\rightarrow L$ and any 3-Lie-Rinehart derivation  $\psi'\in Der_{\beta'}(L', R)$, there exists an unique 3-Lie-Rinehart  homomorphism  $h: L'\rightarrow L\ltimes R$
  such that
  \begin{equation}\label{eq:piq}
  \pi\cdot h=f,\quad q\cdot h=\psi', ~~ \mbox{where}~~q: L\ltimes R \rightarrow R, q(x+r)=r, \forall x\in L, r\in R,
  \end{equation} and $ \pi$ is defined as Eq \eqref{eq:pii}.

  Conversely, any 3-Lie-Rinehart homomorphism  $h: L'\rightarrow L\ltimes R$ determines a 3-Lie-Rinehart   homomorphism $f: L'\rightarrow L$ and a 3-Lie-Rinehart derivation
$\psi'\in Der_{\beta'}(L', R)$ satisfying  Eq \eqref{eq:piq}, where $\beta'$ is defined as \eqref{eq:inducedbeta}.
\end{theorem}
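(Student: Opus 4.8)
The plan is to define $h$ explicitly by $h(x')=f(x')+\psi'(x')$ for all $x'\in L'$, and then show this is the unique $3$-Lie-Rinehart homomorphism with the stated properties. Since $L\ltimes R$ decomposes as the direct sum $L\dot+R$ of $A$-modules, the compatibility conditions $\pi\cdot h=f$ and $q\cdot h=\psi'$ are immediate from \eqref{eq:pii} and the definition of $q$; and since both $f$ and $\psi'$ are $A$-linear, so is $h$, which makes $h$ an $A$-module homomorphism.

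The main step is to verify that $h$ is a $3$-Lie algebra homomorphism. First I would expand $h([x',y',z'])=f([x',y',z'])+\psi'([x',y',z'])$. Using that $f$ is a $3$-Lie algebra homomorphism on the first summand and the derivation identity \eqref{eq:derivation} on the second, together with the abelianness of $R$ (so that $[\psi'(x'),\psi'(y'),\psi'(z')]=0$), this rewrites $\psi'([x',y',z'])$ purely in terms of the $\beta'$-action. On the other side I would expand $[h(x'),h(y'),h(z')]$ by the semidirect-product multiplication \eqref{eq:semiproductR}, identifying the $L$-components with the $f(x')$ and the $R$-components with the $\psi'(x')$, and substituting $\beta'(x',y')=\beta(f(x'),f(y'))$ from \eqref{eq:inducedbeta}. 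The two sides then coincide term by term.

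To finish the forward direction I would check $\rho$-compatibility: by \eqref{eq:oplus1} we have $\rho_4(h(x'),h(y'))=\rho(f(x'),f(y'))$, and this equals $\rho'(x',y')$ because $f$ is a $3$-Lie-Rinehart homomorphism. Uniqueness is forced, since any admissible $h$ must have $L$-component $\pi(h(x'))=f(x')$ and $R$-component $q(h(x'))=\psi'(x')$, hence agrees with the displayed formula.

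For the converse I would set $f=\pi\cdot h$ and $\psi'=q\cdot h$. Then $f$ is a composite of $3$-Lie-Rinehart homomorphisms, since Corollary \ref{coro:pai} gives that $\pi$ is one, hence $f$ is itself one; and $\psi'$ is $A$-linear as a composite of $A$-linear maps. Applying $q$ to the identity $h([x',y',z'])=[h(x'),h(y'),h(z')]$ and reading off the $R$-component via \eqref{eq:semiproductR} recovers exactly the derivation identity \eqref{eq:derivation}, where once more $[\psi'(x'),\psi'(y'),\psi'(z')]=0$ is supplied by the abelianness of $R$. I expect no real obstacle beyond bookkeeping: the one point to keep firmly in view is that this abelian hypothesis is precisely what makes the cubic bracket in $R$ drop out, which is exactly why the homomorphism condition on $h$ and the derivation condition on $\psi'$ correspond to one another.
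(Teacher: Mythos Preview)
Your proposal is correct and follows essentially the same approach as the paper: define $h(x')=f(x')+\psi'(x')$, verify $A$-linearity, the 3-Lie homomorphism property via \eqref{eq:semiproductR} and \eqref{eq:derivation}, and $\rho$-compatibility via \eqref{eq:oplus1}; for the converse, set $f=\pi\cdot h$, $\psi'=q\cdot h$ and read off the required identities from the components of $h([x',y',z'])=[h(x'),h(y'),h(z')]$. Your appeal to Corollary~\ref{coro:pai} to conclude that $f=\pi\cdot h$ is a 3-Lie-Rinehart homomorphism is marginally cleaner than the paper's direct verification, and your explicit remark on uniqueness fills a small gap the paper leaves implicit, but the substance of the two arguments is the same.
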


\begin{proof} Let $f: L'\rightarrow L$ be  a 3-Lie-Rinehart homomorphism. Then by Theorem \ref{thm:induced},
$(R, \beta')$ is a 3-Lie-Rinrhart algebra $(L', A, \rho')$-module induced by $f$. For any $\psi'\in Der_{\beta'}(L', R)$, defines  $ F$-linear mapping
\begin{equation}\label{eq:h}
h: L'\rightarrow L\ltimes R, ~~~h(x')=f(x')+\psi'(x'), ~~ \forall x'\in L'.
\end{equation}

 Thanks to Eq \eqref{eq:pii}, $ \forall x'\in L'$, $\pi\cdot h(x')=f(x')$ and $q\cdot h(x')=\psi'(x')$. Therefore, $h$ satisfies Eq \eqref{eq:piq}.

  Now we prove  that $h$ is a  3-Lie-Rinehart  homomorphism.

By Definition \ref{defin:homomorph}  and \eqref{eq:h} and \eqref{eq:derivation}, for $\forall a\in A$, $x', y', z'\in L'$,
   \begin{equation*}
   h(ax')=f(ax')+\psi'(ax')=a(f(x')+\psi'(x'))=a h(x'),
  \end{equation*}
\begin{equation*}
\begin{split}
&h([x',y',z'])=f([x',y',z'])+\psi'([x',y',z'])\\
=&[f(x'),f(y'),f(z')]+[\psi'(x'), \psi'(y'), \psi'(z')]+\beta'(x', y')\psi' (z')\\
&+\beta'(y',z')\psi' (x')+\beta'(z', x')\psi' (y')\\
=&[f(x'),f(y'),f(z')]+\beta(f(x'),f(y'))\psi' (z')\\
&+\beta(f(y'),f(z'))\psi' (x')+\beta(f(z'),f(x'))\psi' (y')\\
=&[f(x')+\psi'(x'), f(y')+\psi'(y'),f(z')+\psi' (z')]\\
=&[h(x'),h(y'),h(z')].\\
\end{split}
\end{equation*}
  Therefore, $h$ is a 3-Lie algebra homomorphism.

  Thanks to  Theorem \ref{thm:module},
$\forall x',y'\in L'$,
  \begin{equation*}
  \rho_4(h(x'),h(y'))=\rho_4(f(x')+\psi(x'),f(y')+\psi(y'))=\rho(f(x'),f(y'))=\rho'(x',y').
  \end{equation*}
  It follows that $h$ is a 3-Lie-Rinehart homomorphism.

    Conversely, if
    $h: L'\rightarrow L\ltimes R$  is a 3-Lie-Rinehart  homomorphism. Define $f: L'\rightarrow L$ and $\psi': L'\rightarrow R$ by
    $$ f(x')=\pi(h(x')), ~~ \psi'(x')=q(h(x')), ~~ \forall x'\in L'.
    $$
    Then  $ \forall x'\in L'$,   $f=\pi\cdot h$ and $ \psi'=q\cdot h$, that is, \eqref{eq:piq} holds.

    By  Theorem \ref{thm:module} and Eq \eqref{eq:semiproduct},  $ \forall x', y', z'\in L, a\in A,$ we have
                $\rho(f(x'),f(y'))$ $=\rho_4(h(x'),h(y'))$ $=\rho'(x',y'), f(ax')=af(x'),$ $ ~ \psi'(ax')=a\psi'(x'),$ and
     \begin{equation*}
\begin{split}
&h([x',y',z'])=[h(x'),h(y'),h(z')]\\
=&[f(x')+\psi'(x'),f(y')+\psi'(y'),f(z')+\psi'(z')]\\
=&[f(x'),f(y'),f(z')]+\beta(f(x'),f(y'))\psi'(z')+\beta(f(y'),f(z'))\psi'(x')\\
+&\beta(f(z'),f(x'))\psi'(y')\\
=&[f(x'),f(y'),f(z')]+\beta'(x',y')\psi'(z')+\beta'(y',z')\psi'(x')\\
+&\beta'(z',x')\psi'(y')=f([x',y',z'])+\psi'([x',y',z']).\\
\end{split}
\end{equation*}
Therefore,
 \begin{equation*}
f([x',y',z'])=[f(x'),f(y'),f(z')],
 \end{equation*}
 \begin{equation*}
 \begin{split}
\psi'([x',y',z'])=&\beta'(x',y')\psi'(z')+\beta'(y',z')\psi'(x')+\beta'(z',x')\psi'(y')\\
=& [\psi'(x'),\psi'(y'), \psi'(z')]+\beta'(x',y')\psi'(z')+\beta'(y',z')\psi'(x')+\beta'(z',x')\psi'(y').
\end{split}
 \end{equation*}

Therefore,  $f: L'\rightarrow L$ is a 3-Lie-Rinehart homomorphism and
$\psi'\in Der_{\beta'}(L', R)$.  The proof is complete.
\end{proof}

\section{Crossed module of 3-Lie-Rinehart algebras}

  In this section we discuss crossed modules of $3$-Lie-Rinehart algebras.

\begin{defn}\label{defin:cross}
  Let $(L,A,\rho)$ be a 3-Lie-Rinehart algebra, $(R,A)$ be a 3-Lie A-algebra,  and $\beta: L\wedge L\rightarrow Der(R)$ be an action of  3-Lie-Rinehart algebra   $(L, A, \rho)$ on  $(R, A)$. If a 3-Lie algebra homomorphism
  $\partial: R\rightarrow L$ satisfies:\\
  $(1)$ $\partial(\beta(x,y)r)=[x,y,\partial r], ~~ \forall r\in R,~ x, y\in L,$\\
  $(2)$ $\beta(\partial r_1,\partial r_2)r=[r_1,r_2,r], ~~ \forall r_1, r_2, r\in R,$\\
  $(3)$ $\beta(x,\partial r_1)r_2=-\beta(x,\partial r_2)r_1, ~~ \forall r_1, r_2\in R,$\\
  $(4)$ $\partial(ar)=a\partial(r), ~~ \forall a\in A,~ r\in R,$\\
  $(5)$ $\rho(\partial(r_1),\partial(r_2))(a)=0, ~~\forall a\in A, ~ r_1,r_2,r\in R,$\\
 then $(R, A, \beta, \partial)$ is called {\bf a crossed module of $3$-Lie-Rinehart algebra $(L,A,\rho)$}.
\end{defn}

\begin{theorem}\label{thm:inclusion} Let $(L, A, \rho)$ and $(L', A, \rho')$ be 3-Lie-Rinehart algebras, and $f: L\rightarrow L'$ be 3-Lie-Rinehart homomorphism,
$\partial: Ker(f)\hookrightarrow L$ be the including mapping, that is, $\partial(u)=u$, for all $u\in Ker(f)$. Then $(Ker(f), A, \mbox{ad}, \partial)$ is a crossed module of 3-Lie-Rinehart algebra $(L, A, \rho)$.
\end{theorem}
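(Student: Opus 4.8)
The plan is to identify $(Ker(f),A)$ as the ambient 3-Lie $A$-algebra and the adjoint map as the required action, after which the five axioms of Definition \ref{defin:cross} collapse to short identities. First I would record the structure of $Ker(f)$. Since $f$ is a 3-Lie-Rinehart homomorphism, for $u\in Ker(f)$, $a\in A$, $x,y\in L$ one has $f(au)=af(u)=0$ and $f([x,y,u])=[f(x),f(y),f(u)]=0$, so $Ker(f)$ is an $A$-submodule and a 3-Lie ideal of $L$; moreover $\rho(u,y)=\rho'(f(u),f(y))=\rho'(0,f(y))=0$, hence $\rho|_{Ker(f)\wedge L}=0$. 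By Theorem \ref{thm:prop2} this makes $Ker(f)$ an ideal of $(L,A,\rho)$, so $(Ker(f),A)$ is the 3-Lie $A$-algebra $(R,A)$ required in Definition \ref{defin:cross}.

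Next I would verify that $\beta=\mbox{ad}$, defined by $\beta(x,y)u=[x,y,u]$ for $x,y\in L$ and $u\in Ker(f)$, is an action of $(L,A,\rho)$ on $(Ker(f),A)$ in the sense of Definition \ref{defin:action}. The ideal property guarantees $\beta(x,y)(Ker(f))\subseteq Ker(f)$, and the Filippov identity \eqref{eq:jacobi} shows each $\beta(x,y)$ is a derivation of $Ker(f)$ and that $(Ker(f),\beta)$ is a 3-Lie algebra $L$-module, with axioms \eqref{eq:mod1} and \eqref{eq:mod2} inherited from the regular representation. The $A$-bilinearity $\beta(ax,y)=\beta(x,ay)=a\beta(x,y)$ is where $\rho|_{Ker(f)\wedge L}=0$ enters: sliding the scalar through the bracket via skew-symmetry and \eqref{eq:Rinhart1} produces a spurious term $\rho(y,u)a\cdot x$ (exactly the obstruction flagged in the Remark after Corollary \ref{coro:pai}), which vanishes because $u\in Ker(f)$. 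Property $(3)$ of Definition \ref{defin:action}, namely $\beta(x,y)(au)=a\beta(x,y)u+\rho(x,y)(a)u$, is then just \eqref{eq:Rinhart1} read off directly.

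With the action established, the five conditions of Definition \ref{defin:cross} are bookkeeping, using that $\partial$ is the inclusion $u\mapsto u$. Conditions $(1)$ and $(2)$ become the tautologies $\partial(\beta(x,y)u)=[x,y,u]=[x,y,\partial u]$ and $\beta(\partial u_1,\partial u_2)u=[u_1,u_2,u]$; condition $(3)$ is the skew-symmetry $[x,u_1,u_2]=-[x,u_2,u_1]$; condition $(4)$ restates that $Ker(f)$ is an $A$-submodule; and condition $(5)$, $\rho(\partial u_1,\partial u_2)(a)=\rho(u_1,u_2)(a)=0$, is immediate from $\rho|_{Ker(f)\wedge L}=0$. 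I expect no serious obstacle: the single load-bearing fact is $\rho|_{Ker(f)\wedge L}=0$, which simultaneously repairs the failure of $\mbox{ad}$ to be an action and supplies axiom $(5)$, with everything else following from skew-symmetry and \eqref{eq:Rinhart1}.
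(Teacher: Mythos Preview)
Your proposal is correct and follows essentially the same approach as the paper: establish that $Ker(f)$ is an ideal of $(L,A,\rho)$ (the paper simply cites Theorem~\ref{thm:prop2}), note that $\mbox{ad}$ is then an action of $(L,A,\rho)$ on $(Ker(f),A)$, and verify the five axioms of Definition~\ref{defin:cross} directly. If anything, you are more explicit than the paper about why $\rho|_{Ker(f)\wedge L}=0$ is the load-bearing fact making $\mbox{ad}$ $A$-bilinear; the paper asserts that $\mbox{ad}$ is an action without spelling this out.
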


\begin{proof} By Proposition \ref{thm:prop2},
   $Ker(f)$ is an ideal of the 3-Lie-Rinehart algebra $(L, A, \rho)$. Therefore, $\mbox{ad}: L\wedge L\rightarrow Der(Ker(f))$ is an action of  $(L, A, \rho)$ on $(Ker(f), A)$.

   Thanks to Eq \eqref{eq:ad}, for all $x,y\in L$, $u_1, u_2, u\in Ker(f)$ and $a\in A$,

   $\partial(\mbox{ad}(x,y)u)=\mbox{ad}(x,y) u=[x,y, u]=[x,y,\partial u],$

  $\mbox{ad}(\partial u_1,\partial u_2)u=\mbox{ad}(u_1, u_2)u=[u_1,u_2,u],$~~  $\partial(au)=a u=a\partial(u),$

  $\mbox{ad}(x,\partial u_1)u_2=[x,\partial u_1,u_2]=[x,u_1,u_2]=-[x,u_2,u_1]=-[x,\partial u_2,u_1]=-ad(x,\partial u_2)u_1,$

   $\rho(\partial(u_1),\partial(u_2))(a)=\rho(u_1, u_2)a=\rho'(f(u_1), f(u_2))a=0,$
    the result follows.
 \end{proof}

\begin{coro}\label{coro:coross1}
 Let $(L,A,\rho)$ be a 3-Lie-Rinehart algebra and $N$ be a proper ideal of  $(L,A,\rho)$. Then $(N, A, \mbox{ad},i)$ is a crossed module of the $3$-Lie-Rinehart algebra $(L, A, \rho)$, where $i: N\rightarrow L$, $i(x)=x, \forall x\in N.$
\end{coro}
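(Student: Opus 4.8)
The plan is to deduce this directly from Theorem \ref{thm:inclusion} by exhibiting the proper ideal $N$ as the kernel of a suitable 3-Lie-Rinehart homomorphism. Since $N$ is an ideal of $(L, A, \rho)$, it satisfies $\rho|_{N \wedge L} = 0$, $aN \subseteq N$ for all $a \in A$, and $N$ is an ideal of the 3-Lie algebra $L$; these are exactly the hypotheses under which Theorem \ref{thm:prop1} produces the quotient 3-Lie-Rinehart algebra $(L/N, A, \tilde{\rho})$ with $\tilde{\rho}(x_1+N, x_2+N) = \rho(x_1, x_2)$.

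First I would introduce the canonical projection $q: L \rightarrow L/N$, $q(x) = x + N$, and take $L' = L/N$ with $\rho' = \tilde{\rho}$. I would then verify that $q$ is a 3-Lie-Rinehart homomorphism in the sense of Definition \ref{defin:homomorph}: it is a 3-Lie algebra homomorphism because it is the quotient map of 3-Lie algebras; it satisfies $q(ax) = ax + N = a(x+N) = a\,q(x)$ by the $A$-module structure on $L/N$ given in Theorem \ref{thm:prop1}; and $\tilde{\rho}(q(x), q(y)) = \tilde{\rho}(x+N, y+N) = \rho(x, y)$ by the definition of $\tilde{\rho}$. Hence all three conditions of a 3-Lie-Rinehart homomorphism hold.

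Next I would observe that $Ker(q) = \{x \in L \mid x + N = N\} = N$, and that the including map $i: N \rightarrow L$ of the corollary coincides with the including map $\partial: Ker(q) \hookrightarrow L$ of Theorem \ref{thm:inclusion}. Applying Theorem \ref{thm:inclusion} to the homomorphism $f = q$ then yields immediately that $(Ker(q), A, \mbox{ad}, \partial) = (N, A, \mbox{ad}, i)$ is a crossed module of the 3-Lie-Rinehart algebra $(L, A, \rho)$, which is the desired conclusion.

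There is essentially no substantive obstacle here, since the five crossed-module axioms of Definition \ref{defin:cross} have already been checked in the proof of Theorem \ref{thm:inclusion}; the only content to confirm is that the quotient projection onto $L/N$ is a legitimate 3-Lie-Rinehart homomorphism with kernel exactly $N$, and this is guaranteed by Theorem \ref{thm:prop1}. The role of the hypothesis that $N$ be \emph{proper} is to ensure the quotient $L/N$ is a nonzero 3-Lie-Rinehart algebra so that $q$ is a genuine (nontrivial) homomorphism; the argument itself does not require anything beyond the ideal axioms.
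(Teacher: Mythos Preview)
Your proposal is correct and follows essentially the same approach as the paper, which simply says ``Apply Theorem \ref{thm:inclusion}.'' You have made explicit what the paper leaves implicit: the relevant 3-Lie-Rinehart homomorphism is the quotient projection $q: L \to L/N$ (available by Theorem \ref{thm:prop1}), whose kernel is exactly $N$ and whose inclusion map coincides with $i$.
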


\begin{proof}
 Apply Theorem \ref{thm:inclusion}.
 \end{proof}

 \begin{coro}\label{coro:coross2}
  Let $(L,A,\rho)$ be a 3-Lie-Rinehart algebra,  and $(R,\beta)$ be an $(L, A, \rho)$-module. Then $(R, A, \beta,0)$ is a crossed module of  $(L, A, \rho)$.
 \end{coro}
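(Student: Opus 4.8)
The plan is to verify directly that the quadruple $(R,A,\beta,0)$ meets every requirement of Definition \ref{defin:cross}, taking the boundary homomorphism $\partial\colon R\to L$ to be the zero map. First I would extract from the hypothesis exactly what an $(L,A,\rho)$-module gives us: by Definition \ref{defin:action}, the fact that $(R,\beta)$ is an $(L,A,\rho)$-module means both that $\beta$ is an action of $(L,A,\rho)$ on $(R,A)$ (so there is nothing to prove about $\beta$ itself) and, crucially, that $R$ is abelian, i.e. $[R,R,R]=0$. It is also immediate that $\partial=0$ is a $3$-Lie algebra homomorphism: since $R$ is abelian, $\partial([r_1,r_2,r_3])=\partial(0)=0=[0,0,0]=[\partial r_1,\partial r_2,\partial r_3]$.

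Next I would run through the five conditions $(1)$--$(5)$, each of which collapses to $0=0$ once $\partial=0$ is substituted. For $(1)$, the left-hand side is $\partial(\beta(x,y)r)=0$ and the right-hand side is $[x,y,\partial r]=[x,y,0]=0$. For $(3)$ and $(4)$, $\beta(x,\partial r_1)r_2=\beta(x,0)r_2=0$ and $\partial(ar)=0=a\partial(r)$, using the $F$-bilinearity of $\beta$ together with $\beta(x,0)=0$. For $(5)$, the bilinearity of $\rho$ gives $\rho(\partial r_1,\partial r_2)(a)=\rho(0,0)(a)=0$. None of these requires anything beyond linearity and the vanishing of $\partial$.

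The only identity whose two sides are not both visibly zero for purely formal reasons is $(2)$, namely $\beta(\partial r_1,\partial r_2)r=[r_1,r_2,r]$. Here the left-hand side is $\beta(0,0)r=0$ by bilinearity of $\beta$, while the right-hand side $[r_1,r_2,r]$ vanishes precisely because $R$ is abelian. Thus the single genuine point of the proof is to invoke the abelianness of $R$ built into the definition of an $(L,A,\rho)$-module; without that hypothesis condition $(2)$ would fail in general. Having matched all five identities and confirmed that $\partial=0$ is a $3$-Lie algebra homomorphism and that $\beta$ is the prescribed action, we conclude that $(R,A,\beta,0)$ is a crossed module of $(L,A,\rho)$. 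The result then also follows formally from Theorem \ref{thm:inclusion} applied to the zero homomorphism $f\colon L\to 0$, for which $\mathrm{Ker}(f)=L$; but the direct verification above is shorter and makes transparent where each axiom comes from.
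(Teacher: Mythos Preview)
Your direct verification is correct and complete: with $\partial=0$, conditions (1), (3), (4), (5) are immediate from bilinearity, and condition (2) reduces to $0=[r_1,r_2,r]$, which holds precisely because an $(L,A,\rho)$-module is by definition abelian. One minor remark: the zero map $R\to L$ is a $3$-Lie algebra homomorphism regardless of whether $R$ is abelian, so you need the abelianness only for condition (2), not for $\partial$ being a homomorphism.

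Your closing sentence, however, is wrong. Applying Theorem~\ref{thm:inclusion} to $f\colon L\to 0$ gives $\Ker(f)=L$ and produces the crossed module $(L,A,\mathrm{ad},\mathrm{id}_L)$ of $(L,A,\rho)$; it says nothing about $R$ or $\beta$, and in particular does not yield $(R,A,\beta,0)$. There is no obvious choice of $3$-Lie-Rinehart homomorphism $f$ out of $L$ whose kernel is $R$ with the induced $\mathrm{ad}$-action equal to $\beta$ and inclusion equal to $0$. The paper's own proof also simply reads ``Apply Theorem~\ref{thm:inclusion}'', which is at best a pointer to an analogous style of argument rather than a genuine derivation; your direct check of Definition~\ref{defin:cross} is the substantive proof here. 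I would therefore keep your verification and delete the final sentence invoking Theorem~\ref{thm:inclusion}.
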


\begin{proof}
 Apply Theorem \ref{thm:inclusion}.
 $\Box$ \end{proof}

\begin{theorem}
  Let $(L,A,\rho)$ be a 3-Lie-Rinehart algebra, $(R,A)$ be a 3-Lie A-algebra and $\partial: R\rightarrow L$ be a 3-Lie-Rinehart algebra  central epimorphism. Then $(R, A, \beta, \partial)$ is a crossed module of $(L,A,\rho)$, where $\beta: L\wedge L\rightarrow gl(R)$,
   \begin{equation}\label{eq:pa}
   \beta(x,y)r=[r_1,r_2,r], ~~\forall  x,y\in L, ~~r_1,r_2,r\in R, ~~\mbox{where}~~~\partial(r_1)=x, ~~ \partial(r_2)=y.
  \end{equation}

\end{theorem}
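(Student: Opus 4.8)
The plan is to proceed in three stages: first show that $\beta$ is well-defined, then verify that $\beta$ is an action of $(L,A,\rho)$ on $(R,A)$ in the sense of Definition~\ref{defin:action}, and finally check the five axioms of Definition~\ref{defin:cross}. For well-definedness, note that since $\partial$ is a central epimorphism, $Ker(\partial)\subseteq Z(R)$, and because $(R,A)$ is a 3-Lie $A$-algebra its center equals $\{u\in R \mid [u,R,R]=0\}$. Hence if $\partial r_1=\partial r_1'$ and $\partial r_2=\partial r_2'$, then $r_1-r_1'$ and $r_2-r_2'$ lie in $Ker(\partial)$ and act as zero in every bracket, so by trilinearity $[r_1,r_2,r]=[r_1',r_2',r]$; thus $\beta(x,y)r$ is independent of the chosen preimages. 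I would also record here the key simplification: since $\partial$ is a 3-Lie-Rinehart homomorphism and the anchor of $R$ vanishes, $\rho(\partial r_1,\partial r_2)=0$, and surjectivity of $\partial$ forces $\rho=0$ on $L\wedge L$. This is precisely axiom (5) of Definition~\ref{defin:cross}, and it trivialises the anchor terms elsewhere.

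For the claim that $\beta$ is an action, observe that for fixed preimages $r_1,r_2$ the operator $\beta(x,y)$ coincides with $\mbox{ad}_R(r_1,r_2)$, so $\beta(x,y)\in Der(R)$ directly from the fundamental identity \eqref{eq:jacobi} in $R$. Conditions \eqref{eq:mod1} and \eqref{eq:mod2} then follow from the fact that the regular representation of the 3-Lie algebra $R$ is a representation: for \eqref{eq:mod2} I would choose $[r_1,r_2,r_3]$ as a preimage of $[x_1,x_2,x_3]=\partial[r_1,r_2,r_3]$ and apply \eqref{eq:jacobi} together with the skew-symmetry of the bracket, and \eqref{eq:mod1} is handled the same way using the commutator identity for $\mbox{ad}$ stated in the Introduction. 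Property (2) of Definition~\ref{defin:action} uses $A$-linearity of $\partial$, so that $ax=\partial(ar_1)$, and \eqref{eq:Rinhart1} in $R$ with vanishing anchor; property (3) reduces to $[r_1,r_2,ar]=a[r_1,r_2,r]$, again from \eqref{eq:Rinhart1} with vanishing anchor, which matches $a\beta(x,y)r+\rho(x,y)(a)r$ because $\rho=0$.

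It then remains to verify the crossed-module axioms. Axiom (1) is $\partial(\beta(x,y)r)=\partial[r_1,r_2,r]=[\partial r_1,\partial r_2,\partial r]=[x,y,\partial r]$, using that $\partial$ is a 3-Lie homomorphism. Axiom (2) is just the definition of $\beta$ evaluated at $x=\partial r_1,\ y=\partial r_2$. For axiom (3), choosing $s$ with $\partial s=x$ gives $\beta(x,\partial r_1)r_2=[s,r_1,r_2]=-[s,r_2,r_1]=-\beta(x,\partial r_2)r_1$ by skew-symmetry. Axiom (4) is the $A$-linearity of $\partial$, and axiom (5) was already obtained above.

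The main obstacle is the well-definedness of $\beta$: absent any control on $Ker(\partial)$, the value $[r_1,r_2,r]$ would genuinely depend on the chosen preimages. The heart of the matter is that a central epimorphism places $Ker(\partial)$ inside the center of $R$, so that the ambiguity brackets to zero; once this is in hand, everything else is a routine application of the fundamental identity \eqref{eq:jacobi}, the Rinehart identity \eqref{eq:Rinhart1}, and the vanishing of $\rho$.
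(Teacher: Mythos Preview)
Your proof is correct and follows essentially the same route as the paper's: establish well-definedness of $\beta$ via $Ker(\partial)\subseteq Z(R)$, then verify the derivation property through the Jacobi identity, and finally check the crossed-module axioms one by one. Your argument is in fact more complete than the paper's, which omits the verification that $\beta$ satisfies conditions~(1)--(3) of Definition~\ref{defin:action} (the $L$-module identities \eqref{eq:mod1}--\eqref{eq:mod2} and the $A$-linearity conditions); you handle these explicitly.

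Your observation that the hypotheses force $\rho=0$ on all of $L\wedge L$ is a genuine addition: the paper only records axiom~(5), namely $\rho(\partial r_1,\partial r_2)(a)=0$, without noting that surjectivity of $\partial$ upgrades this to $\rho\equiv 0$. This is worth stating, since it clarifies that the theorem is really about 3-Lie $A$-algebras on both sides, and it is exactly what makes condition~(3) of Definition~\ref{defin:action} reduce to the bare $A$-linearity $[r_1,r_2,ar]=a[r_1,r_2,r]$.
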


\begin{proof} Since $\partial: R\rightarrow L$ is a 3-Lie-Rinehart algebra  central epimorphism, by Definition \ref{defin:homomorph}, $\partial$ is a 3-Lie-Rinehart homomorphism satisfying $Ker(\partial)\subseteq Z(R)$ and  $\partial(R)=L$. Therefore, $\beta$ defined by Eq \eqref{eq:pa}  is valid, and   for $\forall a\in A, r, r_1, r_2\in R, $ we have $\partial(ar)=a\partial(r) $ and $\rho(\partial(r_1),\partial(r_2))(a)=0.$

Thanks to Eqs \eqref{eq:pa} and \eqref{eq:jacobi}, for  $\forall x,y\in L, r_i\in R, 1\leq r_i\leq 5,$ there exist $r_1, r_2\in R$ such that
  $~\partial(r_1)=x, ~~ \partial(r_2)=y$, and
  \begin{equation*}
\begin{split}
\beta(x,y)[r_3,r_4,r_5]=&[r_1,r_2,[r_3,r_4,r_5]]\\
=&[[r_1,r_2,r_3],r_4,r_5]+[r_3,[r_1,r_2,r_4],r_5]+[r_3,r_4,[r_1,r_2,r_5]]\\
=&[\beta(x,y)r_3,r_4,r_5]+[r_3,\beta(x,y)r_4,r_5]+[r_3,r_4,\beta(x,y)r_5],
\end{split}
\end{equation*}
$$\beta(x,\partial r_3)\partial r_4=[r_1,r_3,r_4]=-[r_1,r_4,r_3]=-\beta(x,\partial r_4)\partial r_3,$$
 $$\beta(\partial r_1,\partial r_2)r=\beta(x,y)r=[r_1,r_2,r],$$
  \begin{equation*}
  \partial(\beta(x,y)r)=\partial[r_1,r_2,r]=[\partial r_1,\partial r_2,\partial r]=[x,y,\partial r].
\end{equation*}

Thanks to Definition \ref{defin:cross}, $(R, A, \beta, \partial)$ is a crossed module of $(L,A,\rho)$.
\end{proof}

\begin{theorem} Let $(L,A,\rho)$ be a 3-Lie-Rinehrart algebra, $(R, A)$ be a 3-Lie $A$-algebra and $(R, A, \beta, \partial)$ be a crossed module of  $(L,A,\rho)$. Then we have\\
$1)$ $(Im(\partial), A, \rho|_{Im(\partial)\wedge Im(\partial)})$ is a Hypo-ideal of $(L, A, \rho)$.
\\ $2)$ $(Coker(\partial)=L/Im(\partial), A, \rho_0)$ is a 3-Lie $A$-algebra, where $\rho_0(Coker(\partial), Coker(\partial))=0$.\\
 $3)$ $(Ker(\partial), A)$ is an abelian ideal of $(R, A)$.\\
 $4)$ The action $\beta$ of $(L, A, \rho)$ on $(R, A)$ yields an action $\beta|_{Im(\partial)\wedge Im(\partial)}$ of  $(Im(\partial), A, \rho|_{Im(\partial)\wedge Im(\partial)})$ on $(Ker(\partial), A)$.
  \end{theorem}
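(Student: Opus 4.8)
The plan is to dispatch the four claims in turn, the only real input being the five crossed-module axioms of Definition \ref{defin:cross} together with the action axioms of Definition \ref{defin:action}; I would treat $3)$ and $4)$ before $2)$, since the cokernel is by far the delicate point. For $1)$, I first show $Im(\partial)$ is an ideal of the underlying 3-Lie algebra $L$: given $x,y\in L$ and $r\in R$, axiom $(1)$ gives $[x,y,\partial r]=\partial(\beta(x,y)r)\in Im(\partial)$, so $[L,L,Im(\partial)]\subseteq Im(\partial)$. Closure under the $A$-action is axiom $(4)$, since $a\,\partial r=\partial(ar)\in Im(\partial)$. Hence $(Im(\partial),A,\rho|_{Im(\partial)\wedge Im(\partial)})$ is a Hypo-ideal. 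I would also record here, from axiom $(5)$, that $\rho|_{Im(\partial)\wedge Im(\partial)}=0$; this is exactly what is needed later in $4)$.

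For $3)$, the kernel $Ker(\partial)$ is an $A$-submodule of $R$ by axiom $(4)$ (if $\partial k=0$ then $\partial(ak)=a\,\partial k=0$), and it is a 3-Lie ideal of $R$ because $\partial$ is a 3-Lie homomorphism: $\partial[r_1,r_2,k]=[\partial r_1,\partial r_2,\partial k]=0$ whenever $\partial k=0$. To obtain abelianness I would invoke axiom $(2)$: for $k_1,k_2\in Ker(\partial)$ and any $r\in R$, $[k_1,k_2,r]=\beta(\partial k_1,\partial k_2)r=\beta(0,0)r=0$ by bilinearity of $\beta$, so in fact $[Ker(\partial),Ker(\partial),R]=0$. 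Thus $(Ker(\partial),A)$ is an abelian ideal of the 3-Lie $A$-algebra $(R,A)$.

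For $4)$, I restrict $\beta$ to $Im(\partial)\wedge Im(\partial)$ and restrict the module to $Ker(\partial)$. These arguments preserve the kernel: by axiom $(2)$, $\beta(\partial r_1,\partial r_2)k=[r_1,r_2,k]$, and $\partial[r_1,r_2,k]=[\partial r_1,\partial r_2,\partial k]=0$ for $k\in Ker(\partial)$, so $\beta(\partial r_1,\partial r_2)k\in Ker(\partial)$ and $\beta|_{Im(\partial)\wedge Im(\partial)}$ takes values in $Der(Ker(\partial))$. The 3-Lie module identities \eqref{eq:mod1}--\eqref{eq:mod2} and the $A$-bilinearity $\beta(au,v)=\beta(u,av)=a\beta(u,v)$ pass to the restriction since $Im(\partial)$ is a subalgebra and $Ker(\partial)$ a submodule. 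Finally, Definition \ref{defin:action}$(3)$ reads $\beta(u,v)(ak)=a\beta(u,v)k+\rho(u,v)(a)k=a\beta(u,v)k$ for $u,v\in Im(\partial)$, because $\rho|_{Im(\partial)\wedge Im(\partial)}=0$ by axiom $(5)$; this is precisely the action compatibility relative to the anchor $\rho|_{Im(\partial)\wedge Im(\partial)}$. Hence $\beta|_{Im(\partial)\wedge Im(\partial)}$ is an action of $(Im(\partial),A,\rho|_{Im(\partial)\wedge Im(\partial)})$ on $(Ker(\partial),A)$.

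The remaining point $2)$ is where I expect the main obstacle. By $1)$, $Im(\partial)$ is a 3-Lie ideal of $L$ closed under the $A$-action, so $L/Im(\partial)$ inherits a 3-Lie bracket $[\bar x,\bar y,\bar z]=\overline{[x,y,z]}$ and an $A$-module structure $a\bar z=\overline{az}$; declaring $\rho_0=0$, the 3-Lie identity \eqref{eq:jacobi} and the module axioms for $(A,\rho_0)$ hold trivially. The crux is the Rinehart identity \eqref{eq:Rinhart1} with $\rho_0=0$, i.e. $A$-trilinearity of the descended bracket. Computing in $L$ and reducing modulo $Im(\partial)$ gives $[\bar x,\bar y,a\bar z]=\overline{[x,y,az]}=a[\bar x,\bar y,\bar z]+\overline{(\rho(x,y)a)z}$, so everything reduces to showing $(\rho(x,y)a)z\in Im(\partial)$ for all $x,y,z\in L$, $a\in A$. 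Applying $\partial$ to the action identity $(\rho(x,y)a)r=\beta(x,y)(ar)-a\beta(x,y)r$ and using axioms $(1)$ and $(4)$ shows $(\rho(x,y)a)\,\partial r\in Im(\partial)$, which only settles the case $z\in Im(\partial)$; covering arbitrary $z\in L$ is exactly the inclusion $\rho(L,L)A(L)\subseteq Im(\partial)$ (compare the hypothesis $\rho(L,L)A(L)=0$ preceding Theorem \ref{thm:actioneta}). I would therefore either derive this inclusion from the standing hypotheses or isolate it as the precise condition under which $\rho_0=0$ makes $Coker(\partial)$ a genuine 3-Lie $A$-algebra.
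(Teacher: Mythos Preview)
Your arguments for $1)$, $3)$ and $4)$ are essentially the paper's own: it proves $1)$ via $a\,\partial r=\partial(ar)$ and $[x,y,\partial r]=\partial(\beta(x,y)r)$, proves $3)$ via $[r_1,r_2,r]=\beta(\partial r_1,\partial r_2)r=0$ for $r_1,r_2\in\Ker(\partial)$, and dismisses $4)$ with ``follows from $1)$ and Definition~\ref{defin:cross} directly''. Your treatment of $4)$ is in fact more careful than the paper's, since you check explicitly that $\beta(\partial r_1,\partial r_2)$ preserves $\Ker(\partial)$ and that axiom~$(5)$ collapses the action compatibility condition to the $\rho=0$ case.

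For $2)$ you have put your finger on a real problem, not a gap in your own reasoning. The paper's entire proof of $2)$ is the sentence ``The result 2) follows from 1) directly'', with no verification of the Rinehart identity \eqref{eq:Rinhart1} for $\rho_0=0$. Your computation shows that what is needed is $\rho(x,y)(a)\,z\in Im(\partial)$ for all $x,y,z\in L$, $a\in A$, and this does \emph{not} follow from the crossed-module axioms: take $\partial=0$ as in Corollary~\ref{coro:coross2}, so that $Im(\partial)=0$ and $Coker(\partial)=L$; then the claim that $(L,A,0)$ is a 3-Lie $A$-algebra forces $\rho(x,y)(a)\,z=0$ identically, which fails for any $(L,A,\rho)$ with $\rho\neq 0$ (barring degenerate module structure). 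So the inclusion you isolated is genuinely an extra hypothesis, and your instinct to flag it rather than paper over it is the right one.
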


\begin{proof}
Thanks to Definition \ref{defin:cross},  $Im(\partial)$ is a subalgebra of $L$, and for $\forall r_1,r_2,r\in R$, $x,y\in L$ and $a\in A$,
 $$a\partial(r)=\partial(ar)\in Im(\partial), ~~ [x,y,\partial r]=\partial(\beta(x,y)r)\in Im(\partial).$$ Therefore, $(Im(\partial), A, \rho|_{Im(\partial)\wedge Im(\partial)})$ is a Hypo-ideal of $(L, A, \rho)$. The 1) follows.

The result 2) follows from 1) directly.

Since $\partial: R\rightarrow L$ is a 3-Lie algebra homomorphism, $Ker(\partial)$ is an ideal of the 3-Lie algebra $R.$

Thanks to Definition \ref{defin:cross}, $\forall r\in R, r_1,r_2, r'\in Ker(\partial)$, $a\in A$,
$$\partial(ar')=a\partial(r')=0,
 [r_1,r_2,r]=\beta(\partial(r_1),\partial(r_2))r=0.
$$
Therefore, $Ker(\partial)$ is an $A$-module, and $[Ker(\partial), Ker(\partial), R]=0$. we get  3).

The result 4) follows from 1) and Definition \ref{defin:cross} directly.
\end{proof}

Let $(L, A, \rho)$ be a 3-Lie-Rinehart algebra satisfying \begin{equation}\label{eq:0rho}
  \rho(x,y)az=0, ~~ \forall x,y,z\in L, a\in A.
 \end{equation}
Thanks to Theorem \ref{thm:LL}, $(L\wedge L, \rho_2)$ is a Lie-Rinehart algebra, where $\rho_2: L\wedge L\rightarrow Der(R)$, $\rho_2(x\wedge y)=\rho(x, y)$, ~~ $\forall x, y\in L$.

Now let $(R=L\wedge L, A)$ be a Lie $A$-algebra with $\forall x\wedge y, x'\wedge y'\in R, a\in A$,
\begin{equation}\label{eq:0LL}[x\wedge y, x'\wedge y']= [x, y, x']\wedge y'+x'\wedge [x, y, y'], ~~ a(x\wedge y)=\frac{1}{2}(ax)\wedge y+\frac{1}{2}x\wedge (ay).
\end{equation}

From Theorem \ref{thm:do}, $( W(L, R, A), \rho_3)$ is a Lie-Rinehart algebra. And by Eqs \eqref{eq:do},   \eqref{eq:0rho} and $$\rho_2(x\wedge y)b(x'\wedge y')=\frac{1}{2}\big((\rho(x, y)b) x'\wedge y'+x'\wedge(\rho(x, y)b)y'\big)=0,$$
we have that  for
 $ \varphi\in Der(R), x,y\in L$,
 $ (\varphi,x\wedge y) \in W(L, R, A)$ if and only if
 \begin{equation}\label{eq:00rho}
\varphi(b(x'\wedge y'))=b\varphi(x'\wedge y'), ~~~~ \forall b\in A, x'\wedge y'\in R.
 \end{equation}

 Thanks to Lemma \ref{lem:barWar} and Eqs \eqref{eq:relation} and \eqref{eq:wmultiplication},  $ (\overline{W(L,R,A)}, \tilde{\rho_3})$ is a Lie-Rinehart algebra, where
 $\rho_3: \overline{W(L,R,A)} \rightarrow Der(A), \rho_3(\overline{(\varphi, x\wedge y)})=\rho(x, y)$, $\forall x, y\in L$.

By the above notations, we have the following result.

\begin{theorem}\label{thm:L} Let $(L, A, \rho)$ be a 3-Lie-Rinehart algebra satisfying Eq \eqref{eq:0rho}, and $R=L\wedge L$ be a Lie $A$-algebra satisfying Eq \eqref{eq:0LL} and $(L\wedge L, \rho_2)$ be a Lie-Rinehart algebra in Theorem \ref{thm:LL}.
  Then $(L\wedge L, A,\beta,\partial)$ is a crossed module of Lie-Rinehart algebra $(\overline{W(L,L,A)}, \tilde{\rho_3})$,
  where
   \begin{equation}\label{eq:db} 
   \beta: \overline{W(L,L,A)}\rightarrow Der(L\wedge L),~ \beta\overline{(\varphi,x\wedge y)}=\varphi,~~~ \forall \overline{(\varphi, x\wedge y)}\in \overline{W(L,L,A)}.
  \end{equation}
  \begin{equation}\label{eq:ad}
  \partial: L\wedge L\rightarrow \overline{W(L,L,A)}, ~~ \partial(x\wedge y)=\overline{(\mbox{Ad}(x\wedge y),0)}, ~~\forall x, y\in L,
   \end{equation}
    where $Ad(x\wedge y)\in Der(R), ~~ Ad(x\wedge y)(x'\wedge y')=[x\wedge y, x'\wedge y'], ~~ \forall x'\wedge y'\in R.$

  Moreover, $Ker(\partial)$ is the center of the Lie algebra $L\wedge L$, and $Ker(\partial)$ is an $A$-module.
\end{theorem}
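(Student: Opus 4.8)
The plan is to check that $(L\wedge L, A, \beta, \partial)$ satisfies the defining axioms of a crossed module of the Lie-Rinehart algebra $(\overline{W(L,L,A)}, \tilde{\rho_3})$, i.e. the ordinary Lie-Rinehart analogues of Definition \ref{defin:cross}: $\beta$ is an action of $(\overline{W(L,L,A)}, \tilde{\rho_3})$ on the Lie $A$-algebra $L\wedge L$, the map $\partial$ is an $A$-linear Lie algebra homomorphism, and the equivariance and Peiffer identities hold together with $\tilde{\rho_3}(\partial(\xi))=0$. The hypothesis \eqref{eq:0rho} will be the engine of the whole argument: it forces $(\rho(x,y)b)z=0$, hence by \eqref{eq:actionad} forces $\rho_2(\zeta)\,b\cdot\eta=0$ for all $\zeta,\eta\in L\wedge L$ and $b\in A$, which is exactly what turns every twisted Rinehart term into plain $A$-linearity.

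First I would check that $\partial$ is well defined. Because the bracket \eqref{eq:wedge1} descends to $(L\wedge L)/J$, the operator $\mbox{Ad}(x\wedge y)$ depends only on the class $x\wedge y$; and by \eqref{eq:00rho}, membership $(\mbox{Ad}(x\wedge y),0)\in W(L,L,A)$ amounts to $\mbox{Ad}(x\wedge y)$ being $A$-linear. Using skew-symmetry of the bracket, Theorem \ref{thm:LL}, and the vanishing $\rho_2(\eta)\,b\cdot(x\wedge y)=0$, one gets $[b\cdot(x\wedge y),\eta]=b\cdot[x\wedge y,\eta]$, so $\mbox{Ad}(x\wedge y)$ is $A$-linear and $\partial(x\wedge y)=\overline{(\mbox{Ad}(x\wedge y),0)}$ is unambiguous under $\sim$ by \eqref{eq:relation}. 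The same computation yields $\mbox{Ad}(b\cdot(x\wedge y))=b\cdot\mbox{Ad}(x\wedge y)$, which via \eqref{eq:do1} gives the $A$-linearity $\partial(b\cdot\xi)=b\cdot\partial(\xi)$.

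Next come the structural properties. That $\beta$ is a Lie homomorphism $\overline{W(L,L,A)}\to Der(L\wedge L)$ is immediate from \eqref{eq:wmultiplication} and \eqref{eq:db}, and its $A$-compatibility $\beta(b\cdot w)=b\cdot\beta(w)$ from \eqref{eq:do1}; the Rinehart compatibility $\beta(w)(b\cdot\xi)=b\cdot\beta(w)\xi+(\tilde{\rho_3}(w)b)\cdot\xi$ reduces, under \eqref{eq:0rho}, to the $A$-linearity \eqref{eq:00rho} of $\varphi$, because the correction term $(\tilde{\rho_3}(w)b)\cdot\xi=(\rho(x,y)b)\cdot(x'\wedge y')$ vanishes. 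That $\partial$ is a Lie homomorphism is the adjoint-representation identity $\mbox{Ad}([\zeta,\eta])=[\mbox{Ad}(\zeta),\mbox{Ad}(\eta)]$ read through \eqref{eq:wmultiplication} on pairs with zero second slot. For equivariance, writing $w=\overline{(\varphi,x\wedge y)}$ and $\xi=x'\wedge y'$, the derivation property of $\varphi$ gives $[\varphi,\mbox{Ad}(\xi)]=\mbox{Ad}(\varphi(\xi))$, so $\partial(\beta(w)\xi)=\overline{(\mbox{Ad}(\varphi(\xi)),0)}=[w,\partial\xi]$; the Peiffer identity $\beta(\partial\xi_1)\xi_2=[\xi_1,\xi_2]$ is immediate since $\beta(\partial(x\wedge y))=\mbox{Ad}(x\wedge y)$ acts as the bracket; and $\tilde{\rho_3}(\partial\xi)=\rho_2(0)=0$ by \eqref{eq:rhow}, since the second slot of $\partial\xi$ is $0$.

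For the final assertion, unwinding $\sim$ via \eqref{eq:relation} shows $\partial(x\wedge y)=\overline{(0,0)}$ exactly when $\mbox{Ad}(x\wedge y)=0$, i.e. when $x\wedge y$ is central in the Lie algebra $L\wedge L$; hence $Ker(\partial)=Z(L\wedge L)$, and since $\mbox{Ad}(b\cdot(x\wedge y))=b\cdot\mbox{Ad}(x\wedge y)$ the center is stable under the $A$-action \eqref{eq:actionad}, so $Ker(\partial)$ is an $A$-submodule. I expect the only delicate point to be the simultaneous bookkeeping of the two quotients (by $J$ to form $L\wedge L$ and by $\sim$ to form $\overline{W(L,L,A)}$), ensuring that each map and identity, though written on representatives, depends only on classes; hypothesis \eqref{eq:0rho} removes every Rinehart cross-term, so the only genuine computation is the derivation identity $\mbox{Ad}(\varphi(\xi))=[\varphi,\mbox{Ad}(\xi)]$.
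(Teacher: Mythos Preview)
Your proposal is correct and follows essentially the same verification-of-axioms approach as the paper: both check that $\partial$ is an $A$-linear Lie homomorphism, that $\beta$ is an action, and that equivariance, Peiffer, and $\tilde{\rho_3}\circ\partial=0$ hold, with the key computation being $[\varphi,\mbox{Ad}(\xi)]=\mbox{Ad}(\varphi(\xi))$. If anything, you are more explicit than the paper about well-definedness on the two quotients and about exactly where hypothesis \eqref{eq:0rho} kills the Rinehart cross-terms; the paper leaves these points largely implicit.
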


\begin{proof}
  By  Theorem \ref{thm:LL} and Lemma \ref{lem:barWar}, $(L\wedge L, \rho_2)$ and  $(\overline{W(L,L,A)}, \tilde{\rho_3})$ are Lie-Rinehart algebras, and  $ \partial$ defined by \eqref{eq:ad} is an algebra homomorphism since for all $x, y, x', y'\in L$,
 \begin{equation*}
\begin{split}
   \partial([x\wedge y, x'\wedge y'])=&\partial([x, y, x']\wedge y')+\partial(x'\wedge [x, y, y'])\\
 =&\overline{(\mbox{Ad}([x, y, x'], y'),0)}+\overline{(\mbox{Ad}(x', [x, y, y']),0)}\\
  = =&\overline{(\mbox{Ad}([x, y, x'], y')+\mbox{Ad}(x', [x, y, y']),0)}\\
 =&\overline{([\mbox{Ad}(x,y),\mbox{Ad}(x',y')],0)}\\
 =&[\overline{(\mbox{Ad}(x, y),0)}, \overline{(\mbox{Ad}(x', y'),0)}]\\
 =&[\partial(x\wedge y), \partial(x'\wedge y')].
\end{split}
\end{equation*}

 By Eqs \eqref{eq:domul}, \eqref{eq:db},\eqref{eq:ad} and \eqref{eq:00rho}, $\forall b\in A, \overline{(\varphi,x\wedge y)}$ and $\overline{(\varphi', x'\wedge y')}\in  \overline{W(L,L,A)},$

 \vspace{3mm}$
\beta[ \overline{(\varphi,x\wedge y)}, \overline{(\varphi', x'\wedge y')}]=\beta(\overline{([\varphi, \varphi'], (x\wedge y)\wedge(x'\wedge y'))})$
\\$=[\varphi, \varphi']=[\beta(\overline{(\varphi,x\wedge y)}), \beta(\overline{(\varphi', x'\wedge y')})],
$

\vspace{2mm}$\beta( b\overline{(\varphi,x\wedge y)})=\beta( \overline{(b\varphi, b(x\wedge y))})=b\varphi=b\beta( \overline{(\varphi,x\wedge y)}),$

 \vspace{2mm}$
\beta(\overline{(\varphi,x\wedge y)})(b(x'\wedge y'))=\varphi(b(x'\wedge y'))=\varphi(b(x'\wedge y'))+\rho(x, y)b(x'\wedge y')$
\\$=\varphi(b(x'\wedge y'))+\rho_3(\overline{(\varphi,x\wedge y)})b(x'\wedge y').
$
\\Therefore, $\beta: \overline{W(L,L,A)}\rightarrow Der(L\wedge L)$ is  an action of  Lie-Rinehart algebra $(\overline{W(L,L,A)}, \tilde{\rho_3})$ on $(L\wedge L, \rho_2)$.

Thanks to Eqs \eqref{eq:db} and \eqref{eq:ad},  for all $\overline{(\varphi,x\wedge y)}\in  \overline{W(L,L,A)}$ and  $ x'\wedge y'\in L\wedge L,$

\vspace{3mm}
 $
 \partial(\beta\overline{(\varphi,x\wedge y)}(x'\wedge y'))=\partial(\varphi(x'\wedge y'))=\overline{(\mbox{Ad}(\varphi(x'\wedge y')),0)}
$

\vspace{2mm}$=\overline{([\varphi,\mbox{Ad}(x'\wedge y')],0)}=[\overline{(\varphi,x\wedge y)},\overline{(\mbox{Ad}(x'\wedge y'),0)}]$

\vspace{2mm}
$=
[\overline{(d,x\wedge y)},\partial(x'\wedge y')].$
\\Therefore,
\begin{equation*}
 \partial(\beta(\overline{(d,x\wedge y)})(x'\wedge y'))=[\overline{(d,x\wedge y)},\partial(x'\wedge y')].
\end{equation*}

  Thanks to Eqs \eqref{eq:db} and \eqref{eq:actionad},  for all $b\in A$, $x,y,x',y'\in L$,
\begin{equation*}
\beta(\partial(x\wedge y))(x'\wedge y')=\beta\overline{(\mbox{Ad}(x\wedge y),0)}(x'\wedge y')
=\mbox{Ad}(x\wedge y)(x'\wedge y')=[x\wedge y,x'\wedge y'],
\end{equation*}

\begin{equation*}
\partial(b\cdot x\wedge y)=\frac{1}{2}(\partial(bx\wedge y)+\partial(x\wedge by))=b\cdot\overline{(\mbox{Ad}(x\wedge y),0 )}=b\cdot \partial(x\wedge y),
\end{equation*}

$$
 \tilde{\rho_3}(\partial(x\wedge y))b=\tilde{\rho_3}\overline{(\mbox{Ad}(x\wedge y),0)}b=0.
$$
Therefore,  $(L\wedge L,A,\beta,\partial)$ is a crossed module of Lie-Rinehart algebra $(\overline{W(L,L,A)}, \tilde{\rho_3})$. Follows from  Eq \eqref{eq:ad}, $Ker(\partial)$ is the center of the Lie algebra $L\wedge L$, and
for $\forall b\in A, x\wedge y\in Ker(\partial) $, $\partial(b\cdot x\wedge y)=b\cdot \partial(x\wedge y)=0$. The proof is complete.
 \end{proof}

\section*{Acknowledgements}

The first author (R. Bai) was supported in part by the Natural
Science Foundation of Hebei Province (A2018201126).

\bibliography{}

\end{document}